\newtheorem{theorem}{Theorem}[section]
\newtheorem{lemma}[theorem]{Lemma}
\newtheorem{corollary}[theorem]{Corollary}
\theoremstyle{definition}
\newtheorem{example}[theorem]{Example}
\newtheorem{conjecture}[theorem]{Conjecture}
\theoremstyle{remark}
\newtheorem{remark}[theorem]{Remark}
\newtheorem{problem}[theorem]{Problem}
\numberwithin{equation}{section}
\begin{document}
\title{Thickness theorems with an application in number theory}
\author{Kan Jiang}
\address[K. Jiang]{Department of Mathematics, Ningbo University,
People's Republic of China}
\email{jiangkan@nbu.edu.cn}
\date{\today}
\subjclass[2010]{Primary: 28A80, Secondary:11K55}
\keywords{}
\begin{abstract}
 In this paper, we  prove some new thickness theorems with partial derivatives.
We give some applications. First, we give a simple criterion that can judge whether two scaled Cantor sets have non-empty intersection. Second,
 we prove under  some checkable conditions that the continuous image of arbitrary self-similar sets with positive similarity ratios is a closed interval,   a finite union of closed intervals or  containing interior. Third, we prove an analogous Erd\H{o}s-Straus conjecture on the middle-third Cantor set.  Finally,  we consider the solutions to the Diophantine equations on fractal sets. More specifically,
for various Diophantine equations, we cannot find a solution on certain self-similar  sets, whilst for the Fermat's equation, which is associated with the famous Fermat's last theorem, we can find infinitely many  solutions  on many self-similar sets.
 \end{abstract}
\maketitle
\section{Introduction}
 Newhouse \cite{Palis} proved the following gap lemma.
 \begin{theorem}[\textbf{Newhouse's gap lemma}]
 Let $C_1$ and $C_2$ be  two  Cantor sets in $\mathbb{R}$,  if  neither set lies in a gap of the other, and $\tau(C_1)\cdot \tau(C_2)> 1$, then
 $C_1\cap C_2\neq \emptyset,$ where $\tau(C_i)$ denotes the thickness of $C_i, i=1,2.$
 \end{theorem}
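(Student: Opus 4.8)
The plan is to argue by contradiction, assuming $C_1\cap C_2=\emptyset$, and to run an infinite-descent (``linking'') argument. First I would fix notation: realize each $C_i$ as its convex hull with the successively removed \emph{gaps} (the bounded complementary intervals) deleted, and for each gap $G$ let $L(G),R(G)$ be the two adjacent \emph{bridges}, so that by definition of thickness $\min(|L(G)|,|R(G)|)\ge\tau(C_i)\,|G|$ for every gap $G$ of $C_i$. Call a bridge $B$ of $C_1$ and a bridge $B'$ of $C_2$ \emph{linked} if $B\cap B'\neq\emptyset$ while neither contains the other, i.e.\ their endpoints strictly interleave. The first step is to produce an initial linked pair: since their convex hulls overlap but neither $C_i$ is engulfed by a single gap of the other, a bridge of $C_1$ linked with a bridge of $C_2$ is readily extracted from the hulls or their first subdivisions.

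The heart of the proof is a descent lemma: from any linked pair $(B,B')$ one can produce a strictly smaller linked pair of sub-bridges. Suppose $B$ is a bridge of $C_1$, $B'$ a bridge of $C_2$, and they are linked; then one endpoint $c$ of $B'$ lies in the interior of $B$. Since $c\in C_2$ and the sets are disjoint, $c\notin C_1$, so $c$ falls in a gap $G$ of $C_1$ with $G\subset B$. Because $B'$ runs from $c$ across $B$ and protrudes out the far side, the endpoint $r\in C_1$ of $G$ lying toward that side satisfies $r\in\operatorname{int}(B')$; thus $r\in C_1\cap\operatorname{int}(B')$, and since $r\notin C_2$ it lies in a gap $G'$ of $C_2$ with $G'\subset B'$. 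Pairing the bridge $R(G)$ of $C_1$ based at $r$ (of length $\ge\tau(C_1)|G|$) with the bridge of $C_2$ adjacent to $G'$ (of length $\ge\tau(C_2)|G'|$) gives the next candidate pair. The thickness hypothesis enters exactly here: combining these two inequalities multiplicatively, the strict condition $\tau(C_1)\tau(C_2)>1$ forces each new bridge to be long enough relative to the gap it must cross that the linking is preserved and the pair contracts by a definite factor.

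Iterating yields a nested sequence of linked bridge pairs whose diameters tend to $0$, where it is the strict inequality $\tau(C_1)\tau(C_2)>1$ (rather than $\ge 1$) that supplies the uniform contraction ratio and hence forces the shrinkage. The nested intersection is then a single point $p$, which is a limit of endpoints of $C_1$-bridges and of $C_2$-bridges and so lies in $C_1\cap C_2$ since both sets are closed, contradicting disjointness. I expect the main obstacle to be the quantitative bookkeeping inside the descent lemma: checking in every interleaving configuration that $r$ genuinely lands in $\operatorname{int}(B')$ and that the replacement pair is both linked and strictly smaller, and extracting the contraction factor from the product $\tau(C_1)\tau(C_2)>1$. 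The degenerate cases are pleasantly benign, since whenever an endpoint of one bridge coincides with a point of the other Cantor set one immediately exhibits a point of $C_1\cap C_2$; organizing the remaining cases and making the multiplicative estimate precise is the crux.
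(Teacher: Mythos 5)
The paper never proves this statement---it is quoted as a known result and attributed to \cite{Palis}---so your proposal can only be measured against the classical Newhouse argument, whose skeleton (initial linked pair, descent lemma, common limit point via closedness of both sets) you reproduce correctly; note, though, that the classical proof descends on linked pairs of \emph{gaps} rather than bridges, which makes precisely the bookkeeping you defer much cleaner. The genuine gap is inside your descent lemma, and it is not mere bookkeeping. You claim that $\tau(C_1)\tau(C_2)>1$ ``forces each new bridge to be long enough relative to the gap it must cross that the linking is preserved.'' But thickness only provides \emph{lower} bounds, $|R(G)|\ge\tau(C_1)|G|$, and no such one-sided bound can prevent the new bridge $R(G)$ from being swallowed whole by the gap $G'$ of $C_2$ containing $r$, nor prevent one of the two new bridges from containing the other (containment, not interleaving); in either case your new pair is simply not linked. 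In the correct argument the product hypothesis enters through a \emph{two-sided} exclusion: writing $U_1,U_2$ for the current linked gaps and $P_1,P_2$ for the adjacent bridges pointing across them, the only configuration that is impossible is that \emph{both} bridges are trapped, $P_1\subset\overline{U_2}$ and $P_2\subset\overline{U_1}$, since then $\tau(C_1)|U_1|\le|P_1|\le|U_2|$ and $\tau(C_2)|U_2|\le|P_2|\le|U_1|$, whence $\tau(C_1)\tau(C_2)\le 1$. One side alone may perfectly well be trapped; the descent must branch and proceed on the other side, where one either finds a point of one Cantor set inside the other (finishing the proof) or obtains a strictly smaller linked pair. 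Without this case analysis the descent lemma---which is the entire content of the gap lemma---is asserted rather than proved.

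Your finishing step also fails as stated. There is no ``uniform contraction ratio'': a bridge adjacent to an arbitrarily small gap can be nearly as long as the ambient bridge (thickness bounds bridges from below, not above), so your nested linked bridge pairs need not have diameters tending to $0$; and if they do not, the nested intersections are nondegenerate intervals whose endpoints merely interleave, and interleaving points of $C_1$ and $C_2$ on a line do not force $C_1\cap C_2\neq\emptyset$. The strict inequality $\tau(C_1)\tau(C_2)>1$ is what rules out the trapped configuration above; it does not supply a contraction rate. The standard way to close the argument is different: the gaps selected at successive stages are pairwise disjoint subintervals of a fixed bounded interval, so their lengths tend to $0$; linking places an endpoint of the current $C_1$-gap (a point of $C_1$) and an endpoint of the current $C_2$-gap (a point of $C_2$) within a distance bounded by these lengths; compactness then yields a common limit point lying in $C_1\cap C_2$ because both sets are closed, contradicting disjointness (the case in which only one of the two gaps is renewed infinitely often is handled the same way, the stabilized gap's endpoint being a limit of points of the other set).
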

 With a slight modification, we are allowed to prove the following result, which now is called the Newhouse's thickness theorem.
 \begin{theorem}[\textbf{Newhouse's thickness theorem}]
Let  $C_1$ and $C_2$ be two  linked Cantor sets, i.e. the size of largest gap of $C_1$ is not greater than the diameter of $C_2$ and vice versa. If $\tau(C_1)\cdot \tau(C_2)\geq 1$,
 then the arithmetic sum $$C_1+C_2=\{x+y:x\in C_1, y\in C_2\}$$ is an interval.
 \end{theorem}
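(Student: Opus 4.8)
The plan is to convert the statement about the arithmetic sum into a statement about intersections of Cantor sets, where the gap lemma can be applied. Write $a_i=\min C_i$ and $b_i=\max C_i$ for $i=1,2$. The inclusion $C_1+C_2\subseteq[a_1+a_2,\,b_1+b_2]$ is immediate, so it suffices to prove the reverse inclusion. The key observation is that for a real number $t$,
\[
t\in C_1+C_2 \iff \exists\, x\in C_1,\ y\in C_2 \text{ with } x=t-y \iff C_1\cap(t-C_2)\neq\emptyset,
\]
where $t-C_2=\{t-y:y\in C_2\}$. Thus the task reduces to showing that $C_1$ meets the reflected, translated copy $t-C_2$ for every $t\in[a_1+a_2,\,b_1+b_2]$. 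Since $y\mapsto t-y$ is an orientation-reversing affine map, $t-C_2$ is again a Cantor set with the \emph{same} diameter, the same gap sizes, and the same thickness as $C_2$; in particular $\tau(C_1)\cdot\tau(t-C_2)=\tau(C_1)\cdot\tau(C_2)\ge 1$ for all $t$.

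First I would fix $t$ in the open interval $(a_1+a_2,\,b_1+b_2)$ and verify the positional hypotheses of the gap lemma for the pair $C_1,\ t-C_2$. Note $t-C_2\subseteq[t-b_2,\,t-a_2]$, a closed interval of length $\operatorname{diam} C_2$, and that for $t$ in this range the convex hulls $[a_1,b_1]$ and $[t-b_2,t-a_2]$ overlap. The linked hypothesis is exactly what forbids either set from lying in a gap of the other: if $t-C_2$ were contained in a bounded gap $(p,q)$ of $C_1$, then $[t-b_2,t-a_2]\subseteq(p,q)$ would force $q-p>\operatorname{diam} C_2$, whereas $q-p$ is at most the largest gap of $C_1$, which by hypothesis is at most $\operatorname{diam} C_2$ — a contradiction (a closed interval cannot sit inside an open interval of no greater length). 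The symmetric argument, using that the largest gap of $C_2$ is at most $\operatorname{diam} C_1$, rules out $C_1$ lying in a gap of $t-C_2$. Hence neither set lies in a gap of the other, and the gap lemma gives $C_1\cap(t-C_2)\neq\emptyset$, i.e. $t\in C_1+C_2$. The endpoints are trivial, since $a_1+a_2$ and $b_1+b_2$ lie in $C_1+C_2$ because $a_i,b_i\in C_i$.

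The hard part will be the borderline equality $\tau(C_1)\cdot\tau(C_2)=1$, because the gap lemma as stated above requires the \emph{strict} inequality $\tau_1\tau_2>1$, and the thickness product is independent of $t$, so one cannot recover strictness for intermediate $t$. To handle this I would use that $C_1+C_2$ is compact, being the continuous image of the compact set $C_1\times C_2$ under addition, so the set of attainable $t$ is closed. The critical ratio case can then be settled in one of two ways: either by a direct refinement of the nested-bridges argument underlying the gap lemma (at the ratio $1$ a bridge of one set can still never be entirely swallowed by a gap of the other, yielding a nested sequence of intervals whose common point lies in both $C_1$ and $t-C_2$), or by perturbing/approximating the configuration so that the strict gap lemma applies to obtain a dense set of attainable $t$ and then invoking closedness. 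Combining the interior, the endpoints, and this boundary analysis yields $C_1+C_2=[a_1+a_2,\,b_1+b_2]$, an interval.
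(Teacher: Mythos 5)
The paper itself contains no proof of this statement: it is quoted in the introduction as a classical theorem immediately after Newhouse's gap lemma, with only the remark that it follows from that lemma ``with a slight modification.'' Your proposal supplies precisely that modification---for each $t$ in $[a_1+a_2,b_1+b_2]$ apply the gap lemma to $C_1$ and $t-C_2$, using that reflection and translation preserve diameter, gap lengths and thickness, and that the linked hypothesis is exactly what prevents either set from lying in a gap of the other---and it is correct; in particular you rightly flag that the borderline case $\tau(C_1)\tau(C_2)=1$ is not covered by the strict-inequality gap lemma as stated, and your first fix (rerunning the nested-bridges argument, which survives at the critical ratio, as in Astels' treatment) is the standard resolution, while the perturbation alternative would be harder to make precise since the thickness product is invariant under the translations available and cannot be improved by varying $t$.
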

 Astels \cite{Astels} generalized Newhouse's thickness theorem by considering multiple sum.
 Let $C_i$ be a Cantor set, $1\leq i\leq d$. We say  that $\{C_i\}_{i=1}^{d}$ are linked if
 \begin{itemize}
 \item  $|I_{r+1}|\geq |O_j|$ for $r=1,\cdots, d-1 \mbox{ and }j=1,\cdots, r$;
 \item  $|I_1|+\cdots+|I_r|\geq |O_{r+1}|$ for $r=1,\cdots, d-1$,
 \end{itemize}
 where $I_i$ stands for the convex hull of $C_i$, and $O_i$ refers to a gap with maximal size in $C_i.$ We write $|A|$ for the length or diameter of $A$.

 Astels proved the following result.
 \begin{theorem}\label{Astels}
 Let $\{C_i\}_{i=1}^{d}$ be  Cantor sets.
 \begin{itemize}
 \item If $$\sum_{i=1}^{d}\dfrac{\tau(C_i)}{\tau(C_i)+1}\geq 1,$$
 then  $$C_1+\cdots +C_d=\left\{\sum_{i=1}^{d}x_i:x_i\in C_i\right\} $$ contains some interiors.
 \item If $\{C_i\}_{i=1}^{d}$ are linked and   $\sum_{i=1}^{d}\dfrac{\tau(C_i)}{\tau(C_i)+1}\geq 1,$ then $C_1+\cdots +C_d$ is an interval.
 \end{itemize}
 \end{theorem}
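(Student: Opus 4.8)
The plan is to realize the sum as a decreasing intersection of sums of interval-unions and then, by processing the gaps of all the $C_i$ simultaneously in non-increasing order of size, to prove that no hole is ever created. Throughout, write $I_i=[a_i,b_i]$ for the convex hull of $C_i$ and set $\gamma_i=\tau(C_i)/(\tau(C_i)+1)\in(0,1)$. The reformulation of thickness that makes $\gamma_i$ the natural quantity is this: if a gap $G$ of $C_i$ is flanked by a bridge $B$, then $|B|\ge\tau(C_i)\,|G|$, equivalently
\[
\frac{|G|}{|G|+|B|}\le 1-\gamma_i .
\]
Thus $1-\gamma_i$ is exactly the maximal ratio of a gap to the block it occupies, and the whole point of the hypothesis $\sum_i\gamma_i\ge 1$ is that these ratios will be shown to combine additively under Minkowski summation.

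First I would set up the approximation. For each $i$ let $F_i^{(n)}$ be $I_i$ with the $n$ largest gaps of $C_i$ deleted, so that $F_i^{(n)}$ is a finite union of closed bridges and $\bigcap_n F_i^{(n)}=C_i$. Since the $F_i^{(n)}$ are nested compacta, a routine compactness argument gives
\[
C_1+\cdots+C_d=\bigcap_n\bigl(F_1^{(n)}+\cdots+F_d^{(n)}\bigr),
\]
so it suffices to control the finite-stage sums. I would then list all gaps of all the $C_i$ in a single sequence $g_1,g_2,\dots$ of non-increasing size and remove them one at a time, updating exactly one coordinate at each step. The heart of the proof is the invariant that every gap $\Gamma$ of the current partial sum, flanked by a bridge $\mathcal B$, satisfies $|\Gamma|/(|\Gamma|+|\mathcal B|)\le 1-\sum_i\gamma_i$; when $\sum_i\gamma_i\ge 1$ this forces $|\Gamma|=0$, i.e.\ the sum is an interval.

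The inductive step is where the work lies. When the next gap $G$, of size $s$, is deleted from a bridge $B=B^L\cup G\cup B^R$ of $C_j$, the definition of thickness guarantees $|B^L|,|B^R|\ge\tau(C_j)\,s$, since at the moment $G$ is removed its two adjacent bridges are precisely those appearing in the thickness infimum. The only region that can be lost from the sum is the slab $\bigl(\sum_{i\ne j}F_i^{(n)}\bigr)+G$; I would show each point $q+g$ of this slab is recovered as $q'+b$ with $q'\in\sum_{i\ne j}F_i^{(n)}$ and $b\in B^L\cup B^R$, by sliding $b$ through one of the flanking bridges. Because $|B^L|,|B^R|\ge\tau(C_j)\,s$, this reduces exactly to the statement that the partial sum of the remaining coordinates has no gap of ratio exceeding $1-\sum_{i\ne j}\gamma_i$ within distance $s$ of $q$ — that is, to the same invariant for the smaller index set, the contribution $\gamma_j$ being supplied by the flanking bridges. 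Crucially, since the gaps are processed in non-increasing order of size, every coordinate other than $j$ has at this stage been cut only by gaps of size $\ge s$, so its bridges are long on the scale $s$; this scale-coordination is what licenses adding the budgets $1-\gamma_i$, and is the step I expect to be the main obstacle to make fully rigorous.

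Finally I would account for the two conclusions. Under the linking hypotheses the conditions $|I_{r+1}|\ge|O_j|$ and $|I_1|+\cdots+|I_r|\ge|O_{r+1}|$ are exactly the scale-compatibility needed to start the process from the full convex-hull interval $[\,\sum_i a_i,\ \sum_i b_i\,]$ and to keep a genuine flanking bridge present at every stage; combined with the invariant, this yields that $C_1+\cdots+C_d$ is the whole interval $[\,\sum_i a_i,\ \sum_i b_i\,]$. Without linking one cannot in general reach the full interval, but the same invariant, applied after restricting each $C_i$ to a suitable sub-Cantor set supported on a single well-aligned bridge (which only increases the relevant thicknesses), produces a genuine subinterval of the sum, giving that $C_1+\cdots+C_d$ contains interior. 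In both cases the one delicate point is the additivity of the $1-\gamma_i$ budgets established in the inductive step above.
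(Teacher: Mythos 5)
First, a point of reference: the paper never proves Theorem \ref{Astels} at all --- it is quoted as background and attributed to Astels \cite{Astels}. The closest internal results are Theorem \ref{Main} and Corollary \ref{Astels extension}, which are proved by an entirely different mechanism (covering the level set $\{f=w\}$ by rectangles built from gaps and reaching a contradiction through the implicit function theorem), and which give a different conclusion: a finite union of closed intervals, with no linked hypothesis. So your proposal can only be measured against Astels' original argument, and that is indeed the strategy you are reconstructing: the normalized thickness $\gamma_i=\tau(C_i)/(\tau(C_i)+1)$ read as a gap-to-block ratio, exhaustion of the sum by finite-stage sums, deletion of all gaps in a single merged non-increasing order, and recovery of the deleted slab by sliding through the two flanking bridges, whose lengths are at least $\tau(C_j)|G|$ precisely because the merged order restricts, within each $C_j$, to the order used in the definition of thickness.

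That said, the proposal has a genuine gap, and it sits exactly where the theorem lives. You reduce the inductive step to the claim that the sub-sum $\sum_{i\ne j}F_i$ has no gap of relative size exceeding $1-\sum_{i\ne j}\gamma_i$ ``within distance $s$ of $q$,'' and this claim is never proved --- you flag it yourself as the main obstacle. This is not a routine verification: to make the induction close one must first specify what the bridges and gaps of an intermediate Minkowski sum even are (sums of unions of intervals overlap, so the gap/bridge structure of $\sum_{i\in S}F_i$ is not inherited coordinate-wise), then prove a quantitative statement of the form ``every gap of $\sum_{i\in S}F_i$ at the current scale has relative size at most $1-\sum_{i\in S}\gamma_i$,'' and finally verify that removing the next gap preserves this for the new, possibly merged, gap --- including the critical configuration $\delta_1+\delta_2=s$ in which both recovery intervals $[q+\delta_1,\,q+\delta_1+|B^L|]$ and $[q-\delta_2-|B^R|,\,q-\delta_2]$ just miss the sub-sum. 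That budget arithmetic is the entire content of Astels' theorem; without it the argument is a plan, not a proof. Two smaller holes: (i) the invariant as literally stated is vacuous for the full index set once $\sum_i\gamma_i\ge 1$ (its right-hand side is $\le 0$), so the induction must be organized over proper subsets $S$ with $\sum_{i\in S}\gamma_i<1$, which you gesture at but do not set up; (ii) in the unlinked case, restricting each $C_i$ to a single bridge indeed does not decrease thickness, but producing a \emph{linked} family this way requires a selection lemma --- one must exhibit bridges of mutually comparable lengths, e.g.\ by following maximal offspring, whose lengths shrink per generation by a factor bounded below in terms of $\tau(C_i)$ --- and this is asserted, not shown.
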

Newhouse's and Astels' thickness theorems have many variants. We introduce some  results. Chronologically,  Moshchevitin \cite{Moshchevitin2000} proved an elegant thickness theorem, which generalizes  the Astels' thickness theorem.
However, Moshchevitin only gave a very simple outline of the proof. The next thickness theorem is due to Simon and Taylor \cite{ST}.
They proved, under some conditions, that the continuous image (the definition is in the next page) of  fractal sets contains interior.
 By virtue of their thickness theorem they proved some interesting results. Namely,  the sum of a two dimensional Cantor set and a circle contains some interior. Moreover, Simon and Taylor considered the pinned Falconer distance set, and obtained under some conditions that the pinned Falconer distance set contains some interior.
 The author considered under some conditions that the continuous image of Cantor sets is an interval \cite{Jiang2022}.

 Thickness theorems have many applications in number theory,  fractal geometry and dynamical systems. The reader may refer to \cite{Astels,  Feng2020, Iosevich2021, Iosevich2012, Taylor2021, Moshchevitin2000,Palis, ST, Takahashi}.
 We  introduce some applications in number theory.
 In fact,
various thickness theorems establish some relation between analytic number theory and fractal geometry\cite{Yu}. In analytic number theory, usually it is difficult to prove  some classical problems and  conjectures, for instance, the Waring problem \cite{Erdos1980}. However, with the help of thickness theorems, we can easily prove similar statements in fractal geometry. For instance, we can prove an analogous ``Lagrange's four-square theorem" on the classical middle-third Cantor set \cite{Wang,Yu}.
That is, let $x\in[0,4]$, then there exist some $x_i\in C, 1\leq i\leq 4$ such that
$$x=x_1^2+x_2^2+x_3^2+x_4^2,$$ where $C$ is the middle-third Cantor set.
In this paper, we give another application, i.e.
we prove an analogous  Erd\H{o}s-Straus conjecture on the middle-third Cantor set.  Moreover, we consider  whether some Diophantine equations have a solution on Cantor sets. In particular, we can find  the  fractal solutions to the Fermat's equation, which is associated with the famous Fermat's last theorem, on many self-similar sets. The reader may find similar applications in Section 3.
Thickness theorems have some applications in geometry.   Greenleaf,  Iosevich, and Taylor\cite{Iosevich2021} proved under some conditions that the configuration sets have some interiors, McDonald and Taylor\cite{Taylor2021} considered the  modified  distance problem, and obtained several interesting results.

Although
 the above mentioned thickness theorems are very useful to many fractal sets, all of them simultaneously  need two crucial  conditions, i.e.
Cantor sets  should  have large thicknesses, and they should  be linked (for different thickness theorems  the linked conditions differ).  Moreover, the thickness is difficult   to be estimated. For instance, it is very difficult to calculate the thicknesses of  the self-similar sets with complicated overlaps.
 For small thicknesses, i.e. thickness is strictly smaller than $1$, to the best of our knowledge,  there is no general thickness theorem, although we can find some examples such that the product of thicknesses is strictly smaller than $1$, and the sum of two specific Cantor sets is still an interval. For the linked condition, Takahashi\cite{Takahashi} proved that if
$\tau(K_1)\tau(K_2)\geq 1,$ then $K_1+K_2$ is a finite union of closed intervals. This result is still correct in a general setting.
The purpose  of this paper is to make a contribution to
 investigating  thickness theorems without utilizing thicknesses. Moreover, we prove an analogous Takahashi's thickness theorem without the linked condition.

Newhouse's and Astels' thickness theorems only investigate the sum of Cantor sets. Indeed, thickness theorems can be considered in  a very general setting, i.e.  the continuous images of fractal sets.
We introduce some definitions and results.
Let $A_i\subset \mathbb{R}, 1\leq i\leq d$ be a nonempty set, and  $f:\mathbb{R}^d\to \mathbb{R}$ be a $C^1$ function.
We define the continuous image of $f$ as follows:
$$f(A_1,\cdots, A_d)=\{f(x_1\cdots,x_d):x_i\in A_i, 1\leq i\leq d\}.$$
If each $A_i$ is an interval, then the topological structure of $f(A_1,\cdots, A_d)$ is clear. Nevertheless, if each $A_i$ is  a fractal set, then usually it is difficult to describe the Hausdorff dimension or topological structure of $f(A_1,\cdots, A_d)$. For instance, let $K_1$ and $K_2$ be two self-similar sets in $\mathbb{R}$, to the best of our knowledge, generally, we do not know when
$$K_1*K_2=\{x*y:x\in K_1, y\in K_2\},*=+,-,\cdot,\div $$
contains interior (if $*=\div$, then we assume $y\neq 0$).  In particular, given  two fractal sets $K_1$ and $K_2$,  we,  generally,   do not know whether
$$K_1+K_2=\{x+y:x\in K_1, y\in K_2\}$$ contains interior.  This problem is related to the celebrated Palis conjecture \cite{Yoccoz,Palis}.
The topological structure of $K_1*K_2$ is much more complicated. For instance, even for the middle-third Cantor set $C$, we do not completely  understand the topological structure of
$$C\cdot C=\{xy:x,y\in C\}. $$
The reader may refer to \cite{Gu}.

In fractal geometry, it is natural to ask when the continuous image of some fractal sets
 is an interval,  a union of finitely many closed intervals or containing some interior, provided that the continuous image of some fractal sets has full Hausdorff dimension, see \cite{BABA1,Hochman2012,PS}  and the references therein.    Generally, it
looks quite unlikely that there exists a simple checkable criterion which works for all
fractal sets in this question. In this paper, we will give some checkable conditions for any self-similar sets with positive similarity ratios.
We now introduce some related results. The first one, to the best of our knowledge, is due to
Steinhaus \cite{HS} who  proved in 1917 the following interesting  results:
$$C+C=\{x+y:x,y\in C\}=[0,2], C-C=\{x-y:x,y\in C\}=[-1,1],$$ where $C$ is the middle-third Cantor set.
In 2019, Athreya, Reznick and Tyson \cite{Tyson} proved that
\[
C\div C=\left\{\dfrac{x}{y}:x,y\in C, y\neq0\right\}=\bigcup_{n=-\infty}^{\infty}\left[ 3^{-n}\dfrac{2}{3},3^{-n}\dfrac
{3}{2}\right] \cup\{0\}.
\]
In \cite{Gu}, Gu, Jiang, Xi and Zhao discussed  the topological structure of $$C\cdot C=\{xy:x,y\in C\}.$$
They proved  the exact Lebesgue measure of $C\cdot C$ is about $0.80955.$   We give some remarks on the above results. The main idea of \cite{Tyson} is only  effective for homogeneous self-similar sets, i.e. all the similarity ratios coincide. For a general self-similar set or some general Cantor set, we may not utilize their idea directly.

We give the definition of thickness \cite{Astels,Falconer}.
 Recall that every Cantor set $K$ on the real line can be constructed by starting with a closed interval $I=I_1$ (the convex hull of $K$), and successively removing disjoint open complementary
intervals. Clearly
there are  countably many disjoint open complementary intervals $(O_n)_n$, which
we may assume, are ordered so that their lengths $|O_n|$ are non-increasing. If several
intervals have the same length, we order them arbitrarily. The two unbounded path-connected
components of $R\setminus K$ are not included. For each $n\in \mathbb{N}$ the interval $O_n$ is a
subset of some closed path-connected component $I_n$ of $I\setminus (O_1\cup O_2\cup \cdots \cup O_{n-1})$.We say
that such a $O_n$ is removed from $I_n$. We call all the removed open intervals $(O_n)_n$ gaps of $K$.
Each $O_n$ is removed from
a closed interval $I_n$, leaving behind two closed intervals $L_n$ and $R_n$, the left and right
of $I_n\setminus O_n$.
 We call $L_n$ and $R_n$ bridge of $K$.
 The thickness of $K$ is defined by
 $$\tau(K)=\inf_{n\in \mathbb{N}}\dfrac{\min\{|L_n|, |R_n|\}}{|O_n|},$$
 where $|\cdot|$ stands for the length of the convex hull.
 We assume that the sequence of complementary intervals $(O_n)_n$ is always infinite, and that a single point is not contained in the Cantor set.

There is another  method which can generate compact sets.
Let $[A,B]$ be a closed interval. In the first step, we remove $n_1-1$ open intervals from $[A,B]$, and obtain $n_1$ closed intervals, where $n_1\in \mathbb{N}_{\geq 2}.$
Let $F_1$ be the union of  all these $n_1$ intervals. In the second stage,  for each remaining closed interval $I_{i}, 1\leq i\leq n_1$, we remove $(n_{2,i}-1), 1\leq i\leq n_1, $ open intervals and obtain  $n_{2,i}$ closed intervals, where $n_{2,i}\in \mathbb{N}_{\geq 2}.$ We denote by $ F_2$  the union of  all the remaining closed intervals in the second stage.
Generally, if we have obtained $F_k$, then for each closed interval from $F_k$, we remove finite open intervals and get finitely many remaining closed intervals. Let $F_{k+1}$ be the union of all these new remaining closed intervals in the $(k+1)$-th step.
We define a set
$$K=\bigcap_{k=1}^{\infty}F_k,$$
and call it a Cantor set.
In some reference, we call such set a Moran set. Throughout the paper, we call it Cantor set.
We call all the deleted open intervals the gaps of $K$, and  all the remaining closed intervals in each $F_k, k\geq1$ the bridges of $K$.
We can also define thickness of $K$ with similar definition.

For a  bridge from some $F_k, k\geq 1$, denoted by $I$, we denote $\hat{I}$ by the unique bridge in $F_{k-1}$ that contains $I$.  We say that $\hat{I}$ is the father of $I$ or $I$ is an offspring of $\hat{I}$.  Similarly, given a gap of in the $k$-th level, $ k\geq 1$, i.e.  $G$, we say $\hat{G}$ is the father of $G$ or $G$ is an offspring of $\hat{G}$, where $\hat{G}$ is the unique bridge in $F_{k-1}$ that contains $G$.
Here we adopt the convention that $F_0=[A,B].$
We write $r_{I}$ for the ratio of $|I|$ to $|K|$.
Given $d\in \mathbb{N}_{\geq 2}$.
Let $\{K_i\}_{i=1}^{d}$ be $d$ Cantor sets.
Define  $$ s_{min}:=\min_{1\leq i\leq d}\inf\{|I|/|\hat{I}|:I \mbox{ is a bridge from  }K_i\}. $$
The number $s_{min}$ can be considered as the minimal ``similarity ratio" of $\{K_i\}_{i=1}^{d}$ for all the steps.
Let $G$ be a gap of some $K_i, 1\leq i\leq d$.  We suppose further that $G$  is not  one of the  deleted gaps in the first step when we construct $K_i$, i.e.  $G\subset F_1^{(i)}$, where $F_1^{(i)}$ denotes the union of all the bridges  of $K_i$ in the first step.

We define the following numbers
 \begin{eqnarray*}
\kappa &=&\max_{1\leq i\leq d}\sup\left\{\dfrac{|G|}{|\hat{G}|}:G \mbox{ is a gap of } K_i,  \mbox{ and  }G\subset F_1^{(i)} \right\}\\
\kappa^{+} &=&\max_{1\leq i\leq d}\sup\left\{\dfrac{|G|}{|\hat{G}|}:G \mbox{ is a gap of } K_i \right\}.
\end{eqnarray*}
Clearly, $\kappa^{+}\geq \kappa.$
Let $$f:\mathbb{R}^d\to \mathbb{R},d\geq 2$$ be a  $C^1$ function, and $D\subset \mathbb{R}^d$ be some compact set.
Define $$L_i=\max_{x\in D}\left|\dfrac{\partial f}{\partial x_i}\right|, S_i=\min_{x\in D}\left|\dfrac{\partial f}{\partial x_i}\right|,$$where $x=(x_1,\cdots,x_d).$
In what follows, we always assume that $$|\partial_{x_i} f|=|f_i|=\left|\dfrac{\partial f}{\partial x_i}\right|>0$$ for any $1\leq i\leq d$.
Define $$r_i=\dfrac{L_i}{S_i}.$$

We  use the partial derivatives to replace the thicknesses. The  following theorem can be viewed as a nonlinear thickness theorem without using  thicknesses.
\begin{theorem}\label{Cantor}
Let $\{K_i\}_{i=1}^{d}\subset \mathbb{R}$   be Cantor sets.
Let the convex hull of  $K_i$ be  $[A_i,B_i]$, $i=1,\cdots,d$.  If for any $ i\in\{1,2,\cdots,d\}$ and any $(x_1,x_2,\cdots, x_d)\in [A_1,B_1]\times\cdots \times [A_d, B_d]$, we have
\begin{equation*}
   \left\lbrace\begin{array}{cc}
\partial_{x_i}f>0\\
s_{min}\left(\sum_{j\neq i}(B_j-A_j)\partial_{x_j}f \right)-\kappa(B_{i}-A_{i}) \partial_{x_i}f\geq 0,
                \end{array}\right.
\end{equation*}
then
$$f(K_1,K_2,\cdots,K_d)=f(F_1^{(1)},F_1^{(2)},\cdots,F_1^{(d)}),$$ i.e.  $f(K_1,K_2,\cdots,K_d)$ is a  union of finitely many closed intervals.
\end{theorem}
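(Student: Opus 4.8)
The plan is to prove the two inclusions separately; only the reverse one carries any content. Since $K_i\subseteq F_1^{(i)}$ for every $i$, and $f(A_1,\dots,A_d)$ is monotone in each argument, the inclusion $f(K_1,\dots,K_d)\subseteq f(F_1^{(1)},\dots,F_1^{(d)})$ is immediate. For the structure of the right-hand side, note that each $F_1^{(i)}$ is a finite union of closed intervals, so $F_1^{(1)}\times\cdots\times F_1^{(d)}$ is a finite union of closed boxes; because $\partial_{x_i}f>0$ on the relevant domain, $f$ is strictly increasing in each coordinate, hence the image of every box is the closed interval $[f(\mathbf{a}),f(\mathbf{b})]$ determined by its lower and upper corners, and the total image is a finite union of closed intervals. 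Thus the displayed equality already contains the ``i.e.'' assertion, and it remains to prove $f(F_1^{(1)},\dots,F_1^{(d)})\subseteq f(K_1,\dots,K_d)$.

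Fix $t$ in the right-hand side. Since $F_1^{(i)}$ is the disjoint union of the first-level bridges of $K_i$, there is one first-level bridge $I_i^{(1)}$ from each $K_i$ with $t\in f(I_1^{(1)},\dots,I_d^{(1)})$. I would then construct nested bridges $I_i^{(1)}\supseteq I_i^{(2)}\supseteq\cdots$, with $I_i^{(k+1)}$ an offspring of $I_i^{(k)}$, maintaining the invariant $t\in f(I_1^{(k)},\dots,I_d^{(k)})$ for every $k$. Granting this, $\bigcap_k I_i^{(k)}$ is a single point $x_i\in\bigcap_k F_k^{(i)}=K_i$ (the diameters tending to $0$), and since $f(I_1^{(k)},\dots,I_d^{(k)})$ is an interval whose two endpoints both converge to $f(x_1,\dots,x_d)$, the value $t$ is squeezed onto $f(x_1,\dots,x_d)$, so $t\in f(K_1,\dots,K_d)$. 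Observe that the construction begins inside the first-level bridges and only ever descends, so every gap crossed during refinement lies in some $F_1^{(i)}$; this is precisely why the hypothesis is phrased with $\kappa$ (gaps inside $F_1^{(i)}$) rather than with $\kappa^{+}$.

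The engine is the refinement step. By monotonicity, $f(J_1,\dots,J_d)=[f(\mathbf{a}),f(\mathbf{b})]$ for the lower and upper corners of any box of bridges. Passing from $I_i^{(k)}$ to offspring, I would select the offspring coordinate by coordinate and verify that the images of consecutive offspring boxes leave no ``hole'', so their union still covers the parent image and therefore contains $t$. A hole can arise only across a gap $G$ in some coordinate $i$, and a mean value computation shows it is avoided exactly when $\partial_{x_i}f\cdot|G|\le\sum_{j\ne i}\partial_{x_j}f\cdot w_j$, where $w_j$ is the width of the reserve still available in coordinate $j$. Bounding the gap by $|G|\le\kappa\,|I_i^{(k)}|\le\kappa\,(B_i-A_i)$, and bounding the reserves from below by using that every bridge has length at least $s_{min}$ times that of its father (so in particular $|I_j^{(1)}|\ge s_{min}(B_j-A_j)$), the whole family of no-hole inequalities collapses to the stated hypothesis $s_{min}\big(\sum_{j\ne i}(B_j-A_j)\partial_{x_j}f\big)\ge\kappa(B_i-A_i)\partial_{x_i}f$, keeping $t$ inside some offspring box and propagating the invariant.

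The hard part will be the reserve bookkeeping across levels. When the offspring are chosen one coordinate at a time, the already-committed coordinates contribute only their smaller offspring widths, so the available compensation can drop by a factor $s_{min}$; moreover, as the depth grows the bridges of different sets need not remain comparable, so one must ensure the no-hole inequalities survive uniformly at every level rather than merely at the top. The role of $s_{min}$ is exactly to control how much reserve is lost at each commitment, and the full-width normalization $(B_i-A_i)$, together with $s_{min}(B_i-A_i)\le|I_i^{(1)}|\le B_i-A_i$, is what keeps the hypothesis scale-consistent as the construction descends. Checking that these bounds simultaneously close the no-hole inequalities for the worst-case configuration of committed-versus-free coordinates, at every level, is the crux; the remainder is the nested-interval limit together with the elementary monotonicity of $f$.
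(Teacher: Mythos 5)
Your skeleton is the same as the paper's (nested refinement of bridge boxes, a mean-value ``no-hole'' estimate between consecutive offspring, a squeeze in the limit), but the step you yourself flag as the crux --- the reserve bookkeeping across levels --- is precisely what is missing, and your \emph{synchronous} coordinate-by-coordinate descent cannot supply it. If every coordinate descends one level per refinement step, the relative widths $|I_j^{(k)}|/(B_j-A_j)$ in different coordinates drift apart geometrically: one Cantor set may have bridges of ratio $1/2$ at every level while another has bridges of ratio $s_{min}$, so after $k$ steps the gap to be jumped in coordinate $i$ can be as large as $\kappa\,2^{-k}(B_i-A_i)$ while the reserves available in the other coordinates are only of order $s_{min}^{k}(B_j-A_j)$. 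The no-hole inequality at depth $k$ then needs compensation of order $(2s_{min})^{-k}$, i.e.\ a hypothesis with growing powers of $s_{min}$, whereas the stated hypothesis carries a single power of $s_{min}$; since $s_{min}<1/2$, the inequality fails for all large $k$ no matter which offspring you select. The failure is in the schedule, not in the selection, so your argument as organized cannot be completed.

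The missing idea, which is the heart of the paper's proof, is an \emph{asynchronous greedy} schedule together with a comparability invariant. The paper refines only one bridge per step --- a bridge whose length relative to its set's convex hull, $r_I=|I|/(B_i-A_i)$, is maximal among all current bridges in all coordinates --- and proves by induction that at every stage every current bridge $I_j$ satisfies $r_{I_j}\geq s_{min}\,r_{I'}$, where $I'$ is the bridge being split (the paper's inequality $\alpha_k\geq\beta_k$). With this invariant, in the mean-value computation the common scale $r_{I_{i_0}}$ factors out of both the gap term, $|G|\leq\kappa\,r_{I_{i_0}}(B_{i_0}-A_{i_0})$, and every reserve term, $|I_j|\geq s_{min}\,r_{I_{i_0}}(B_j-A_j)$, so the stated hypothesis closes the no-hole inequality \emph{uniformly at every step}, with no loss accumulating in depth. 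Once this schedule is in place, the rest of your argument goes through (the paper phrases the conclusion as $f(E_k^{(1)},\dots,E_k^{(d)})=f(E_{k+1}^{(1)},\dots,E_{k+1}^{(d)})$ for all $k$ rather than tracking a single value $t$, which is cosmetic); moreover, the first $d$ greedy steps produce exactly $(F_1^{(1)},\dots,F_1^{(d)})$, which is what yields the displayed equality $f(K_1,\dots,K_d)=f(F_1^{(1)},\dots,F_1^{(d)})$ and justifies $\kappa$ in place of $\kappa^{+}$, as you correctly anticipated.
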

When we consider the function $f(x_1,x_2,\cdots,x_d)=\sum_{i=1}^{d}x_i$, we have the following.
\begin{corollary}\label{Cantor-Newhouse}
Let $\{K_i\}_{i=1}^{d}\subset \mathbb{R}$   be Cantor sets.
Let the convex hull of  $K_i$ be  $[A_i,B_i]$, $i=1,\cdots,d$.  If for any $ i\in\{1,2,\cdots,d\}$,
$$
s_{min}\left(\sum_{j\neq i}(B_j-A_j) \right)-\kappa(B_{i}-A_{i}) \geq 0,
              $$
then
$$K_1+K_2+\cdots+K_d=\left\{\sum_{i=1}^{d}x_i:x_i\in K_i,1\leq i\leq d\right\}$$  is a  union of finitely many closed intervals.
\end{corollary}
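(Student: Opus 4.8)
The plan is to obtain the corollary as a direct specialization of Theorem \ref{Cantor} to the linear function $f(x_1,\dots,x_d)=\sum_{i=1}^{d}x_i$, so the only work is to check that the hypotheses of the theorem collapse onto those of the corollary and then to read off the meaning of the conclusion.

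First I would compute the partial derivatives of $f$. For every index $i$ and every point $(x_1,\dots,x_d)\in[A_1,B_1]\times\cdots\times[A_d,B_d]$ one has $\partial_{x_i}f\equiv 1$. Hence the positivity requirement $\partial_{x_i}f>0$ holds automatically, and substituting $\partial_{x_j}f=1$ for all $j$ into the second inequality of Theorem \ref{Cantor} turns
$$s_{min}\Big(\sum_{j\neq i}(B_j-A_j)\,\partial_{x_j}f\Big)-\kappa(B_i-A_i)\,\partial_{x_i}f\geq 0$$
into exactly
$$s_{min}\Big(\sum_{j\neq i}(B_j-A_j)\Big)-\kappa(B_i-A_i)\geq 0,$$
which is precisely the hypothesis assumed in the corollary. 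Thus all hypotheses of Theorem \ref{Cantor} are met for this choice of $f$.

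Applying Theorem \ref{Cantor} then yields $f(K_1,\dots,K_d)=f(F_1^{(1)},\dots,F_1^{(d)})$, that is,
$$K_1+\cdots+K_d=F_1^{(1)}+\cdots+F_1^{(d)}.$$
It remains to observe that the right-hand side is a finite union of closed intervals. By the construction of a Cantor set, each $F_1^{(i)}$ is the union of the finitely many bridges surviving the first removal step, hence a finite union of closed intervals, say $F_1^{(i)}=\bigcup_{k}J_{i,k}$. Since the Minkowski sum of two closed intervals satisfies $[a,b]+[c,d]=[a+c,b+d]$, distributing the sum over these finite unions writes $F_1^{(1)}+\cdots+F_1^{(d)}$ as the union over the finitely many tuples $(k_1,\dots,k_d)$ of the closed intervals $J_{1,k_1}+\cdots+J_{d,k_d}$. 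This exhibits the arithmetic sum as a finite union of closed intervals (overlapping pieces may then merge, but finiteness is preserved), which is the asserted structure.

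I do not expect a genuine obstacle here, since the entire content resides in Theorem \ref{Cantor} and the corollary is an immediate substitution. The only points requiring care are the trivial verification that the partials are the constant $1$, so that the two inequalities coincide, and the elementary fact that Minkowski summation preserves the property of being a finite union of closed intervals.
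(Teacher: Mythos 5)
Your proposal is correct and matches the paper's approach exactly: the paper states this corollary as an immediate specialization of Theorem \ref{Cantor} to $f(x_1,\dots,x_d)=\sum_{i=1}^{d}x_i$, with no separate proof given, precisely because the partial derivatives are identically $1$ and the hypotheses collapse as you describe. Your added remark that $F_1^{(1)}+\cdots+F_1^{(d)}$ is a finite union of closed intervals (by distributing the Minkowski sum over the finitely many bridges) is the same elementary observation the paper relies on implicitly in stating the conclusion of Theorem \ref{Cantor}.
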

Comparing with Astels' result, i.e. Theorem \ref{Astels},  in Corollary \ref{Cantor-Newhouse}, the numbers  $s_{min}$ and $\kappa$ are crucial to the structure of $K_1+K_2+\cdots+K_d$. Note that
if $s_{min}\geq \kappa^{+}$, then $$\min_{1\leq i\leq d}\tau(K_i)\geq 1.$$

Applying Theorem \ref{Cantor} to   two Cantor sets, we have the following result.
\begin{corollary}\label{Cantorsum}
Let $\{K_i\}_{i=1}^{2}\subset \mathbb{R}$   be Cantor sets, and the convex hull of  $K_i$ be  $[A_i,B_i]$, $i=1,2$.  If for any $(x,y)\in [A_1,B_1]\times [A_2,B_2]$,
\begin{equation*}
   \left\lbrace\begin{array}{cc}
   \partial_x f>0,\partial_y f>0\\
s_{min}(B_2-A_2)\partial_y f -\kappa(B_{1}-A_{1})\partial_x f \geq 0\\
s_{min}(B_1-A_1)\partial_x f -\kappa(B_{2}-A_{2})\partial_y f \geq 0\\
 \partial_y f(B_2-A_2) \geq \partial_x f\max\{|G_{j}^{(1)}|\}\\
\partial_x f\min\{|B_j^{(1)}|\}\geq\partial_y f\max\{|G_{j}^{(2)}|\},
                \end{array}\right.
\end{equation*}
where $\{G_{j}^{(i)}\}$ denote the gaps of $K_i$ in the first step, and $\{B_j^{(1)}\}$ are the bridges of $K_1$ in the first step,
then
$$f(K_1,K_2)=\{f(x,y):x\in K_1,y\in K_2\}$$  is a  closed interval.
\end{corollary}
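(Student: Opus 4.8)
The plan is to obtain the corollary in two moves: first use Theorem \ref{Cantor} to reduce $f(K_1,K_2)$ to the image of the first-step approximations, and then upgrade the resulting finite union of closed intervals to a single interval using the two extra hypotheses. For the reduction I would note that the first three inequalities in the hypothesis are precisely the hypotheses of Theorem \ref{Cantor} specialised to $d=2$ (with $x_1=x$, $x_2=y$): positivity of both partials, together with the two inequalities coming from $i=1$ and $i=2$. Hence Theorem \ref{Cantor} applies and yields $f(K_1,K_2)=f(F_1^{(1)},F_1^{(2)})$, a finite union of closed intervals. It then remains to prove that this union is connected. I would set notation: write the first-step bridges of $K_1$ as $I_1=[a_1,b_1],\dots,I_m=[a_m,b_m]$ from left to right with gaps $G_p^{(1)}=(b_p,a_{p+1})$, and those of $K_2$ as $J_1=[c_1,d_1],\dots,J_n=[c_n,d_n]$ with gaps $G_q^{(2)}=(d_q,c_{q+1})$, so that $a_1=A_1$, $b_m=B_1$, $c_1=A_2$, $d_n=B_2$. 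Since $\partial_x f>0$ and $\partial_y f>0$ on the whole rectangle, $f$ is strictly increasing in each variable, so $f(I_p\times J_q)=[f(a_p,c_q),f(b_p,d_q)]$, and the extreme values of $f$ on $[A_1,B_1]\times[A_2,B_2]$ are $f(A_1,A_2)$ and $f(B_1,B_2)$.

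The core is a two-stage overlap estimate. In the first stage I fix a bridge $I_p$ and show $f(I_p\times F_1^{(2)})=[f(a_p,A_2),f(b_p,B_2)]$ is a single interval; for this it suffices that consecutive pieces overlap, i.e. $f(b_p,d_q)\ge f(a_p,c_{q+1})$ for each $q$. I would prove this by travelling along the straight segment from $(a_p,c_{q+1})$ to $(b_p,d_q)$, parametrising $y$ as the affine function of $x$ with slope $-|G_q^{(2)}|/|I_p|$. Along it the derivative of $f$ equals $\partial_x f-\partial_y f\,|G_q^{(2)}|/|I_p|$, which is nonnegative because $|I_p|\ge\min_j|B_j^{(1)}|$ and $|G_q^{(2)}|\le\max_j|G_j^{(2)}|$, combined with the fifth hypothesis $\partial_x f\,\min_j|B_j^{(1)}|\ge\partial_y f\,\max_j|G_j^{(2)}|$. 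Thus $f$ is nondecreasing along the segment and the two pieces meet; chaining over $q$ gives the claimed interval for each $p$.

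In the second stage I would glue the intervals $[f(a_p,A_2),f(b_p,B_2)]$ over $p$, for which it suffices that $f(b_p,B_2)\ge f(a_{p+1},A_2)$. Here I would travel along the straight segment from $(a_{p+1},A_2)$ to $(b_p,B_2)$, parametrising $x$ affinely in $y$ with slope $-|G_p^{(1)}|/(B_2-A_2)$; the derivative of $f$ along it is $\partial_y f-\partial_x f\,|G_p^{(1)}|/(B_2-A_2)$, nonnegative by the fourth hypothesis $\partial_y f\,(B_2-A_2)\ge\partial_x f\,\max_j|G_j^{(1)}|$ together with $|G_p^{(1)}|\le\max_j|G_j^{(1)}|$. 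Chaining over $p$ then shows $f(F_1^{(1)},F_1^{(2)})=[f(A_1,A_2),f(B_1,B_2)]$, a single closed interval, and combined with the reduction this proves $f(K_1,K_2)$ is a closed interval.

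The step I expect to be the main obstacle is exactly this overlap estimate, because the partial derivatives are not constant on the rectangle, so one cannot naively compare worst-case values of $\partial_x f$ and $\partial_y f$ taken at different points. The device that makes it work is to integrate the \emph{pointwise} hypotheses along the right straight-line paths connecting the corners of adjacent image-rectangles, so that the fourth and fifth inequalities translate precisely into the nonnegativity of a directional derivative of $f$ along each path. One only has to check, which is immediate from convexity of $[A_1,B_1]\times[A_2,B_2]$ and the fact that all relevant endpoints lie in the hulls, that these segments stay inside the domain on which the hypotheses are assumed to hold.
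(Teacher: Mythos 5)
Your proposal is correct and takes essentially the same route the paper intends: the first three hypotheses are exactly those of Theorem \ref{Cantor} with $d=2$, giving $f(K_1,K_2)=f(F_1^{(1)},F_1^{(2)})$, and the paper (in the remark following the corollary) attributes the connectedness of this finite union to the last two hypotheses, which is precisely what your two-stage gluing argument establishes. Your directional-derivative integration along segments correctly supplies the overlap details that the paper leaves implicit, with hypothesis five handling the gaps of $K_2$ over a single bridge of $K_1$ and hypothesis four handling the gaps of $K_1$ over the full height $B_2-A_2$.
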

\begin{remark}
Comparing with Astels' result, i.e. Theorem \ref{Astels},
Theorem \ref{Cantor} and Corollary \ref{Cantorsum} do not need the linked condition. Without the linked condition, Astels cannot obtain the complete topological structure of  $K_1+K_2+\cdots+K_d$.
The last two conditions in the bracket of Corollary \ref{Cantorsum} guarantee  that $f(F_1^{(1)},F_1^{(2)})$ is a closed interval.
If $\partial_{x_i}f<0$ for some $i$, then we may prove similar results. The proof is almost the same.
If the conditions on the first-order  partial derivatives are not satisfied, then we may consider the higher-order partial derivatives. This is allowed by the Taylor's theorem. Naturally, we need to assume $f\in C^k$ for some integer $k\geq 2.$ The key is to prove the  \textbf{Claim} in the proof of Theorem \ref{Cantor} using the higher-order  partial derivatives.
Our idea is still valid for many other fractal sets, for instance, random self-similar sets, conformal sets and so forth. We do not introduce similar results.
\end{remark}
Theorem \ref{Cantor} provides a method which can judge whether the intersection of  two scaled Cantor sets is empty. 
\begin{theorem}\label{intersection}
Let $\{K_i\}_{i=1}^{2}\subset \mathbb{R}$   be Cantor sets, and the convex hull of  $K_i$ be  $[A_i,B_i]$,  where $A_i>0$, $i=1,2$.  If
\begin{equation*}
   \left\lbrace\begin{array}{cc}
A_1 s_{min}(B_2-A_2) -B_2\kappa(B_{1}-A_{1}) \geq 0\\
A_2 s_{min}(B_1-A_1) -B_1\kappa(B_{2}-A_{2}) \geq 0,\\
                \end{array}\right.
\end{equation*}
then for any real number $a,b\in \mathbb{R}\setminus\{0\}$,
$$aK_1\cap b K_2\neq \emptyset$$
if and only if
$$\dfrac{b}{a}\in \dfrac{F_1^{(1)}}{F_1^{(2)}}=\left\{\dfrac{x}{y}:x\in F_1^{(1)}, y\in F_1^{(2)}\right\}.$$
\end{theorem}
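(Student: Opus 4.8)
The plan is to recognize the intersection condition as a statement about the ratio set $K_1/K_2$, and then to invoke Theorem \ref{Cantor} applied to the division function $f(x,y)=x/y$.

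First I would reduce the problem to a ratio computation. Since $A_i>0$, every element of $K_i$ is strictly positive, so for $a,b\neq 0$ the sets $aK_1$ and $bK_2$ share a point $z=ax=by$ with $x\in K_1,\,y\in K_2$ precisely when $b/a=x/y$ for some such $x,y$. Hence
\[
aK_1\cap bK_2\neq\emptyset \iff \frac{b}{a}\in\frac{K_1}{K_2}.
\]
When $a$ and $b$ have opposite signs both sides fail simultaneously, because $K_1/K_2$ and $F_1^{(1)}/F_1^{(2)}$ both lie in $(0,\infty)$. Consequently it suffices to establish the set identity $K_1/K_2=F_1^{(1)}/F_1^{(2)}$, which is exactly the conclusion $f(K_1,K_2)=f(F_1^{(1)},F_1^{(2)})$ of Theorem \ref{Cantor} for $f(x,y)=x/y$.

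Next I would verify the hypotheses of Theorem \ref{Cantor} on the domain $D=[A_1,B_1]\times[A_2,B_2]$. Here $\partial_x f=1/y>0$ and $\partial_y f=-x/y^2<0$; the negative sign is accommodated by the variant recorded in the Remark following Corollary \ref{Cantorsum}, which replaces each $\partial_{x_j}f$ by $|\partial_{x_j}f|$. Using $|\partial_x f|=1/y$ and $|\partial_y f|=x/y^2$ and clearing the positive factor $y^2$, the two conditions of Theorem \ref{Cantor} (for $i=1$ and $i=2$) become
\[
s_{min}(B_2-A_2)\,x-\kappa(B_1-A_1)\,y\geq 0
\quad\text{and}\quad
s_{min}(B_1-A_1)\,y-\kappa(B_2-A_2)\,x\geq 0,
\]
required for all $(x,y)\in D$. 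The first expression is increasing in $x$ and decreasing in $y$, so its minimum over $D$ occurs at $(x,y)=(A_1,B_2)$, giving $A_1 s_{min}(B_2-A_2)-B_2\kappa(B_1-A_1)\geq 0$; the second is decreasing in $x$ and increasing in $y$, so its minimum occurs at $(B_1,A_2)$, giving $A_2 s_{min}(B_1-A_1)-B_1\kappa(B_2-A_2)\geq 0$. These are precisely the two standing hypotheses of Theorem \ref{intersection}. Therefore Theorem \ref{Cantor} yields $f(K_1,K_2)=f(F_1^{(1)},F_1^{(2)})$, i.e. $K_1/K_2=F_1^{(1)}/F_1^{(2)}$, and combining with the reduction completes the equivalence.

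The main obstacle I anticipate is the sign of $\partial_y f$: because $\partial_y f<0$, Theorem \ref{Cantor} does not apply verbatim, so the crux is to justify the sign-flipped version, either by citing the Remark or by re-examining the monotonicity step (the \textbf{Claim}) in the proof of Theorem \ref{Cantor} with $|\partial_{x_j}f|$ in place of $\partial_{x_j}f$. Once that is in place, the remaining work of locating the worst-case corners of $D$ is a routine monotonicity check, and the reduction to the ratio set is elementary given $A_i>0$.
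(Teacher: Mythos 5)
Your proof is correct and takes essentially the same route as the paper: the paper likewise reduces the statement to the equivalence $aK_1\cap bK_2\neq\emptyset \iff b/a\in K_1/K_2$ and then invokes Theorem \ref{Cantor} with $f(x,y)=x/y$ to get $K_1/K_2=F_1^{(1)}/F_1^{(2)}$. Your corner-point verification of the hypotheses and the handling of the sign of $\partial_y f$ simply fill in details the paper leaves implicit.
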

The proof of this result is due to the following fact:
$$aK_1\cap b K_2\neq \emptyset$$ if and only if
$$\dfrac{b}{a}\in \dfrac{K_1}{K_2}.$$ So, if we prove under the conditions in Theorem \ref{intersection} that $\dfrac{K_1}{K_2}$ is a finite union of closed intervals, then we have the desired result. 
Comparing with the Newhouse's gap lemma, Theorem \ref{intersection} can judge  whether  the intersection of two scaled Cantor sets is empty.

Using similar ideas as Theorem \ref{Cantor}, we  partially  describe the topological structure of  the continuous image of any self-similar sets with positive similarity ratios.
\begin{theorem}\label{arithmetic}
Let $\{K_i\}_{i=1}^{d}$   be  self-similar sets with IFS's $$\{\phi_{k,i}(x)=r_{k,i}x+a_{k,i}\}_{k=1}^{\ell_{i}},$$ where $r_{k,i}\in(0,1), a_{k,i}\in \mathbb{R},\ell_i\in \mathbb{N}_{\geq 2}.$  Set $$ s_{min}:=\min\{r_{k,i}:1\leq i\leq d,1\leq k\leq \ell_i\}.$$  Suppose that the convex hull of  $K_i$ is  $[A_i,B_i]$, $i=1,\cdots,d$.  If for any $(x_1,x_2,\cdots,x_d)\in [A_1,B_1]\times \cdots \times [A_d,B_d]$,  and  $1\leq i\leq d,1\leq k\leq \ell_i-1,$
\begin{equation*}
   \left\lbrace\begin{array}{cc}
   \partial_{x_i}f>0\\
s_{min}\left(\sum_{j\neq i}(B_j-A_j)\partial_{x_j}f \right)+(\phi_{k,i}(B_i)-\phi_{k+1,i}(A_i))\partial_{x_i}f \geq 0,
                \end{array}\right.
\end{equation*}
then
$$f(K_1,K_2,\cdots,K_d)=\{f(x_1,x_2,\cdots,x_d):x_i\in K_i,1\leq i\leq d\}$$  is a  union of finitely many closed intervals.
\end{theorem}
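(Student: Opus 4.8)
The plan is to reduce Theorem \ref{arithmetic} to Theorem \ref{Cantor} by observing that a self-similar set is a special case of the Cantor sets of the second (Moran-type) construction, and then verify that the hypothesis of Theorem \ref{arithmetic} forces the hypothesis of Theorem \ref{Cantor}. First I would fix the level-one decomposition: since each $K_i$ has IFS $\{\phi_{k,i}\}_{k=1}^{\ell_i}$ with positive ratios, the first step of its construction produces the $\ell_i$ bridges $\phi_{k,i}([A_i,B_i])$, $k=1,\dots,\ell_i$, which (after ordering) I may assume are arranged left to right as $k=1,\dots,\ell_i$. The gaps in the first step are precisely the open intervals between consecutive bridges, so the gap lying between the $k$-th and $(k+1)$-th bridge has length $|G_{k,i}|=\phi_{k+1,i}(A_i)-\phi_{k,i}(B_i)$. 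Note this quantity is the negative of $\phi_{k,i}(B_i)-\phi_{k+1,i}(A_i)$ appearing in the statement, which is why that term enters with a plus sign rather than a minus sign: in Theorem \ref{Cantor} the relevant contribution was $-\kappa(B_i-A_i)\partial_{x_i}f$, a quantity built from gap sizes, and here the corresponding gap-dependent term is made explicit bridge-by-bridge.

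Second I would translate the two families of constants. Because every contraction ratio used at every level is one of the $r_{k,i}$, the quantity $s_{\min}:=\min\{r_{k,i}\}$ defined in Theorem \ref{arithmetic} coincides with the minimal similarity ratio $s_{\min}$ of Theorem \ref{Cantor}: every bridge $I$ of $K_i$ satisfies $|I|/|\hat I|=r_{k,i}$ for the appropriate $k$, so the infimum of these ratios is exactly $\min_k r_{k,i}$. The role played by $\kappa(B_i-A_i)\partial_{x_i}f$ in Theorem \ref{Cantor} is played here, more sharply, by the per-gap term $-(\phi_{k,i}(B_i)-\phi_{k+1,i}(A_i))\partial_{x_i}f$. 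Indeed $\kappa$ is the worst-case (maximal) relative gap size, and here I would carry the exact first-level gap sizes $|G_{k,i}|$ through the argument instead of bounding them all by the single constant $\kappa(B_i-A_i)$; this is what lets me state the hypothesis as one inequality per pair $(i,k)$ rather than a single inequality per $i$.

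Third I would reprove the key \textbf{Claim} from the proof of Theorem \ref{Cantor} in this self-similar setting: that whenever a tuple of bridges $(I_1,\dots,I_d)$, with $I_j$ a bridge of $K_j$, is such that the image $f(I_1,\dots,I_d)$ meets the image $f(I_1',\dots,I_d')$ of an adjacent tuple obtained by deleting a first-step gap in one coordinate, the two images in fact overlap, so that no gap is opened in the range. The mechanism is the mean value / Taylor estimate: the horizontal extent of $f$ over a bridge $I_j$ in the $i$-th coordinate is at least $s_{\min}(B_i-A_i)S_i$-type lower bounds controlled by $\partial_{x_i}f$, while the jump created by a first-step gap $G_{k,i}$ is at most $|G_{k,i}|\,\partial_{x_i}f$; the hypothesis inequality $s_{\min}\sum_{j\neq i}(B_j-A_j)\partial_{x_j}f\ge (\phi_{k,i}(B_i)-\phi_{k+1,i}(A_i))\partial_{x_i}f$ is exactly the statement that the combined extent contributed by the \emph{other} coordinates at the next refinement level dominates this jump, so the range stays connected across the gap. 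Iterating over all levels and all coordinates, every gap that appears after the first step gets filled in the image, giving $f(K_1,\dots,K_d)=f(F_1^{(1)},\dots,F_1^{(d)})$; and since each $F_1^{(i)}$ is a finite union of closed intervals and $f$ is continuous, the right-hand side is a finite union of closed intervals.

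I expect the main obstacle to be the Claim, specifically making the inductive step uniform across levels. The subtlety is that after the first step the gaps and bridges at level $k\ge 2$ are affine copies (under the various compositions $\phi_{k_1,i}\circ\cdots$) of the first-level configuration, so one must check that the \emph{same} inequality continues to guarantee overlap at every scale. Here the fact that $s_{\min}$ is a genuine lower bound on the contraction ratio at \emph{every} level, together with the mean value theorem applied on the small refined intervals (where $\partial_{x_i}f$ is controlled by $S_i$ and $L_i$ but the ratio structure is scale-invariant), is what makes the single family of hypotheses suffice; I would phrase the induction so that the gap-filling inequality at a deeper level is obtained by multiplying the first-level inequality by the common scaling factor, which cancels and leaves the identical condition to verify.
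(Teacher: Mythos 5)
Your high-level strategy is the same as the paper's: write $f(K_1,\dots,K_d)$ as a decreasing intersection of images of finite unions of cylinder intervals, and use a Taylor/mean-value estimate to show that refining one coordinate never changes the image, the hypothesis being exactly the inequality that makes the images over consecutive children overlap. The genuine gap is in your inductive step, which you propose to close by ``multiplying the first-level inequality by the common scaling factor, which cancels.'' There is no common scaling factor: when a cylinder $\phi_{I'}([A_{i_0},B_{i_0}])$ of ratio $r_{I'}$ is subdivided, the jump term scales by $r_{I'}$, but the compensating terms $\sum_{j\neq i_0}\partial_{x_j}f\cdot r_{I_j}(B_j-A_j)$ scale by the ratios $r_{I_j}$ of whatever cylinders the other coordinates currently occupy, and these are independent of $r_{I'}$. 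If the refinement is performed in a bad order, the per-tuple overlap claim you rely on is simply false: take $f(x,y)=x+y$ and $K_1=K_2=C$ the middle-third Cantor set, where your hypothesis holds with equality; if the second coordinate has already been refined to cylinders of length $3^{-10}$ while the first is still $[0,1]$, then splitting $[0,1]$ opens a gap of length $1/3$ in the image of such a tuple, which the second coordinate's extent $3^{-10}$ cannot fill. What rescues the argument---and is the heart of the paper's proof---is a specific refinement algorithm: at each step one subdivides only the word (over all $d$ coordinates) whose similarity ratio $r_I$ is currently maximal, and proves by induction the invariant $\min\{r_I\}\geq s_{\min}\max\{r_I\}$ over all current words in all coordinates. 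This yields $r_{I_j}\geq s_{\min}r_{I'}$ for $j\neq i_0$, and only then does factoring $r_{I'}$ out of $f(P_1)-f(Q_1)$ leave precisely the hypothesized inequality. Your proposal never specifies a refinement order or establishes this cross-coordinate comparability, so the induction as described does not close.

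A secondary problem is the opening reduction to the Moran-set framework of Theorem \ref{Cantor}. The theorem imposes no separation condition, so first-level pieces may overlap: $\phi_{k+1,i}(A_i)-\phi_{k,i}(B_i)$ can be negative, consecutive ``bridges'' merge, and then it is no longer true that every Moran bridge satisfies $|I|/|\hat{I}|=r_{k,i}$ (a cylinder sitting inside a merged bridge can have ratio below $s_{\min}$), nor that deeper gaps are affine copies of first-level ones. Moreover, even under strict separation a black-box application of Theorem \ref{Cantor} is unavailable, because its hypothesis involves the global constant $\kappa$ (the worst relative gap over \emph{all} coordinates), which is strictly stronger than the per-coordinate, per-gap inequalities assumed here---a point you implicitly concede by planning to reprove the Claim anyway. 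The paper sidesteps both issues by never speaking of bridges and gaps at all: it works directly with the symbolic words and the intervals $\phi_I([A_i,B_i])$, and overlapping children only make the Claim easier, since then $(\phi_{k,i_0}(B_{i_0})-\phi_{k+1,i_0}(A_{i_0}))\partial_{x_{i_0}}f\geq 0$ and the required inequality is automatic.
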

\begin{remark}
We emphasize that no separation condition is imposed on the underlying IFS's. Moreover, the similarity ratios are different.
We  compare our result with Athreya,  Reznick, and  Tyson's work. Their idea is  effective for homogeneous self-similar sets, i.e. all the contractive ratios  coincide. We, however, can handle inhomogeneous cases.

\end{remark}
Theorem \ref{arithmetic} implies many results about the arithmetic on self-similar sets. We only give two results.
\begin{corollary}\label{arithmetictwo}
Let $K_1$ and $K_2$ be two self-similar sets with the IFS's
$$\{\phi_i(x)=r_ix+a_i\}_{i=1}^{p}, \{\psi_j(x)=\rho_jx+b_j\}_{j=1}^{q}, \mbox{respectively},r_i,\rho_j\in(0,1),$$where $$ a_i,b_j\in \mathbb{R},p,q\in \mathbb{N}_{\geq 2}.$$  Suppose that the convex hull of  $K_i$ is  $[A_i,B_i]$, $i=1,2$.  If for any $(x_1,x_2)\in [A_1,B_1]\times  [A_2,B_2]$,  and  $1\leq i\leq p-1, 1\leq j\leq q-1$
\begin{equation*}
   \left\lbrace\begin{array}{cc}
   \partial_{x}f>0,\partial_{y}f>0\\
s_{min}\left((B_2-A_2)\partial_{y}f \right)+(\phi_{i}(B_1)-\phi_{i+1}(A_1))\partial_{x}f \geq 0\\
s_{min}\left((B_1-A_1)\partial_{x}f \right)+(\psi_{j}(B_2)-\psi_{j+1}(A_2))\partial_{y}f \geq 0\\
                \end{array}\right.
\end{equation*}
then
$$f(K_1,K_2)=\{f(x,y):x\in K_1,y\in K_2\}$$  is a    closed interval.

\noindent Moreover, if
 there exists some  $(x_0,y_0)\in K_1\times  K_2$  such that for any

 \noindent $1\leq i\leq p-1, 1\leq j\leq q-1$, we have
\begin{equation*}
   \left\lbrace\begin{array}{cc}
   \partial_{x}f|_{(x_0,y_0)}>0,\partial_{y}f|_{(x_0,y_0)}>0\\
s_{min}\left((B_2-A_2)\partial_{y}f|_{(x_0,y_0)} \right)+(\phi_{i}(B_1)-\phi_{i+1}(A_1))\partial_{x}f|_{(x_0,y_0)} \geq 0\\
s_{min}\left((B_1-A_1)\partial_{x}f|_{(x_0,y_0)} \right)+(\psi_{j}(B_2)-\psi_{j+1}(A_2))\partial_{y}f|_{(x_0,y_0)} \geq 0,
                \end{array}\right.
\end{equation*}
then
$f(K_1,K_2)=\{f(x,y):x\in K_1,y\in K_2\}$ contains some interior.
\end{corollary}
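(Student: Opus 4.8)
The first statement is the $d=2$ case of Theorem~\ref{arithmetic}, so the plan is to quote that theorem and then do a little extra work to sharpen ``finite union'' to ``single interval''. Since $\partial_x f>0$ and $\partial_y f>0$, the map $f$ is strictly increasing in each variable, and the two displayed inequalities are exactly the hypotheses of Theorem~\ref{arithmetic} with $d=2$; that theorem gives that $f(K_1,K_2)$ is a finite union of closed intervals. Its proof in fact identifies $f(K_1,K_2)$ with the first-level image $f(F_1^{(1)},F_1^{(2)})$, where $F_1^{(1)}=\bigcup_{i=1}^{p}\phi_i([A_1,B_1])$ and $F_1^{(2)}=\bigcup_{j=1}^{q}\psi_j([A_2,B_2])$ (the analogue for self-similar sets of the identity displayed in Theorem~\ref{Cantor}). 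Hence the only thing left is to show that this finite union is connected.

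Because both partials are positive, the image of one block $\phi_i([A_1,B_1])\times\psi_j([A_2,B_2])$ is the interval with endpoints $f(\phi_i(A_1),\psi_j(A_2))$ and $f(\phi_i(B_1),\psi_j(B_2))$. To connect two blocks adjacent in the $x$-direction I would verify $f(\phi_i(B_1),\psi_j(B_2))\ge f(\phi_{i+1}(A_1),\psi_j(A_2))$ by integrating $\nabla f$ along the segment from $(\phi_{i+1}(A_1),\psi_j(A_2))$ to $(\phi_i(B_1),\psi_j(B_2))$: along it the $x$-increment is $\phi_i(B_1)-\phi_{i+1}(A_1)$ and the $y$-increment is $\psi_j(B_2)-\psi_j(A_2)\ge s_{min}(B_2-A_2)$, so the integrand is at least $s_{min}(B_2-A_2)\partial_y f+(\phi_i(B_1)-\phi_{i+1}(A_1))\partial_x f\ge 0$ by the first hypothesis. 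The useful feature is that here both partials are evaluated at the same point of the segment, so the pointwise hypothesis applies verbatim; the mirror computation with the second hypothesis connects blocks adjacent in the $y$-direction. Chaining down each column and then across columns shows $f(F_1^{(1)},F_1^{(2)})$ is connected, and since it contains $f(A_1,A_2)$ and $f(B_1,B_2)$ and lies in $[f(A_1,A_2),f(B_1,B_2)]$, it equals that interval; by the identification above this is $f(K_1,K_2)$.

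For the second statement I would localise. As $f\in C^1$, continuity of $\partial_x f,\partial_y f$ propagates the inequalities holding at $(x_0,y_0)$ to a whole product neighbourhood $[a_1,b_1]\times[a_2,b_2]$ of $(x_0,y_0)$. Since $x_0\in K_1$ and $y_0\in K_2$, for large depth the cylinders $\tilde K_1=\phi_w(K_1)\ni x_0$ and $\tilde K_2=\psi_{w'}(K_2)\ni y_0$ sit inside this neighbourhood; each is a rescaled copy of $K_1$, resp.\ $K_2$, with the same contraction ratios and hence the same $s_{min}$. I would then apply the first part to $(\tilde K_1,\tilde K_2)$ to conclude that $f(\tilde K_1,\tilde K_2)$ is a closed interval, nondegenerate because $\partial_x f>0$ separates its endpoints $f(\phi_w(A_1),\psi_{w'}(A_2))$ and $f(\phi_w(B_1),\psi_{w'}(B_2))$. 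As $f(\tilde K_1,\tilde K_2)\subseteq f(K_1,K_2)$, this exhibits interior.

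The one delicate point, where I would concentrate, is that the first-part hypotheses for $(\tilde K_1,\tilde K_2)$ involve the convex-hull lengths $r_w(B_1-A_1)$ and $r_{w'}(B_2-A_2)$, which carry the two different scaling factors $r_w$ and $r_{w'}$: the inherited inequalities read $s_{min}r_{w'}(B_2-A_2)\partial_y f\ge r_w G^{(1)}\partial_x f$ and $s_{min}r_w(B_1-A_1)\partial_x f\ge r_{w'}G^{(2)}\partial_y f$, with $G^{(1)},G^{(2)}$ the relevant first-level gaps, and these force the two scales $r_w,r_{w'}$ to be comparable. I would handle this by choosing the depths of $w$ and $w'$ so that $r_w(B_1-A_1)$ and $r_{w'}(B_2-A_2)$ are as nearly equal as the ratios allow—consecutive cylinder lengths change by a bounded multiplicative factor, so $r_w/r_{w'}$ can be steered into any fixed window about the balanced value—and then absorb the residual mismatch into the slack gained by shrinking the neighbourhood, passing first, if some inequality at $(x_0,y_0)$ is an equality, to a nearby point of $K_1\times K_2$ at which both are strict. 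Securing this balance against the neighbourhood constraint is the crux; the remainder is the gradient bookkeeping already carried out in the first part.
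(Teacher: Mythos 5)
Your treatment of the first statement is correct and is essentially the paper's own argument: the paper also invokes Theorem \ref{arithmetic} to reduce $f(K_1,K_2)$ to the first-level image $f\bigl(\cup_{i=1}^{p}\phi_i([A_1,B_1]),\cup_{j=1}^{q}\psi_j([A_2,B_2])\bigr)$ and then asserts that, under the same hypotheses, this finite union of blocks is a single closed interval. Your gradient-integration computation connecting blocks adjacent in the $x$- and $y$-directions (using $\psi_j(B_2)-\psi_j(A_2)\geq s_{min}(B_2-A_2)$, resp.\ $\phi_i(B_1)-\phi_i(A_1)\geq s_{min}(B_1-A_1)$, and the pointwise hypotheses along the segment) is precisely the detail the paper leaves implicit, and it is sound.

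The second statement is where there is a genuine gap, and it lies exactly at the point you flag as delicate. Your plan requires cylinders $\tilde K_1=\phi_w(K_1)\ni x_0$ and $\tilde K_2=\psi_{w'}(K_2)\ni y_0$, so $w$ must be a prefix of a coding of $x_0$ and $w'$ a prefix of a coding of $y_0$. Hence the attainable values of $\log(r_w/\rho_{w'})$ are differences of two \emph{fixed} sequences whose increments lie in the finite set $\{\log(1/r_i)\}\cup\{\log(1/\rho_j)\}$, and this set of attainable values can be bounded away from $0$ by a definite amount: for instance, if every map along $x_0$'s coding has ratio $1/4$ (so $r_w=2^{-2n}$) while $y_0$'s coding uses a ratio-$1/2$ map once and ratio-$1/4$ maps thereafter (so $\rho_{w'}=2^{-(2m+1)}$), then $r_w/\rho_{w'}$ is always an odd power of $2$ and is never within a factor of $2$ of $1$. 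On the other hand, the window into which you must steer $r_w/\rho_{w'}$ has width governed by the slack in the hypothesized inequalities at $(x_0,y_0)$, which can be arbitrarily small (or zero). So the claim that the ratio ``can be steered into any fixed window'' is false, and the proposed absorption fails structurally: the mismatch does \emph{not} shrink as the depth increases, whereas the slack gained by shrinking the neighbourhood tends to zero. A secondary problem is your reduction to strict inequalities: if a hypothesis at $(x_0,y_0)$ is an equality, there need not exist any nearby point of $K_1\times K_2$ at which it becomes strict, since the relevant expression may attain its maximum $0$ exactly at $(x_0,y_0)$.

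For comparison, the paper gives no proof at all of the ``Moreover'' part, so your attempt goes beyond it; the natural repair of your argument is to drop the requirement that the cylinders contain $(x_0,y_0)$ and only demand $\phi_w([A_1,B_1])\subset U$, $\psi_{w'}([A_2,B_2])\subset V$, where $U\times V$ is a neighbourhood on which strict inequalities persist. Appending arbitrary words to base cylinders inside $U$ and $V$ makes the attainable set of $\log(r_w/\rho_{w'})$ contain a translate of $N_1-N_2$, where $N_1,N_2$ are the additive semigroups generated by $\{\log(1/r_i)\}$ and $\{\log(1/\rho_j)\}$; a numerical-semigroup argument in the commensurable case, or equidistribution in the incommensurable case, then matches the scales exactly or within any prescribed tolerance, after which your computation closes the proof. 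Note that even this repaired route needs strictness at $(x_0,y_0)$, so the equality case of the stated corollary remains open under your approach.
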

By Theorem \ref{arithmetic}, under the conditions in above theorem, $f(K_1,K_2)$ is a union of finitely many closed intervals. However,
under the same conditions, we are allowed to prove
$$f(\cup_{i=1}^{p}\phi_i([A_1, B_1]), \cup_{j=1}^{q}\psi_j([A_2, B_2]))=\cup_{i=1}^{p}\cup_{j=1}^{q}f(\phi_i([A_1, B_1]),\psi_j([A_2, B_2]))$$ is a closed interval. So,
$$f(K_1,K_2)=f(\cup_{i=1}^{p}\phi_i([A_1, B_1]), \cup_{j=1}^{q}\psi_j([A_2, B_2]))$$ is a closed interval.
\begin{corollary}\label{Multiplication}
Let $K_1$ and $K_2$ be two self-similar sets with the IFS's
$$\{\phi_i(x)=r_ix+a_i\}_{i=1}^{p}, \{\psi_j(x)=\rho_jx+b_j\}_{j=1}^{q},r_i,\rho_j\in(0,1), a_i,b_j\in \mathbb{R}.$$
 Suppose that the convex hull of $K_i$ is $[A_i,B_i], A_i\geq 0, i=1,2.$
 If there exists some $(x_0,y_0)\in K_1\times K_2$ such that
 \begin{equation*}
   \left\lbrace\begin{array}{cc}
s_{min}(B_2-A_2)x_0+(\phi_{i}(B_1)-\phi_{i+1}(A_1))y_0>0, 1\leq i\leq p-1\\
s_{min}(B_1-A_1)y_0+(\psi_{j}(B_2)-\psi_{j+1}(A_2))x_0> 0,1\leq j\leq q-1,\\
                \end{array}\right.
\end{equation*}
then
$$K_1
\cdot K_2=\{x\cdot y:x\in K_1, y\in K_2\}$$ contains some interior.
\end{corollary}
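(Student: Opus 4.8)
The plan is to recognize Corollary~\ref{Multiplication} as the special case $f(x,y)=xy$ of the interior part of Corollary~\ref{arithmetictwo}, so that almost all of the work is a direct substitution into an already-proved statement.

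First I would compute the partials of the multiplication map, $\partial_x f=y$ and $\partial_y f=x$, so that $\partial_x f|_{(x_0,y_0)}=y_0$ and $\partial_y f|_{(x_0,y_0)}=x_0$. Inserting these into the two ``sum'' inequalities of the interior part of Corollary~\ref{arithmetictwo} turns them into
\begin{align*}
s_{min}(B_2-A_2)x_0+(\phi_i(B_1)-\phi_{i+1}(A_1))y_0 &\geq 0,\\
s_{min}(B_1-A_1)y_0+(\psi_j(B_2)-\psi_{j+1}(A_2))x_0 &\geq 0,
\end{align*}
which are implied with strict slack by the hypotheses of Corollary~\ref{Multiplication}. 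The only remaining requirement of Corollary~\ref{arithmetictwo} is the positivity $\partial_x f|_{(x_0,y_0)}=y_0>0$ and $\partial_y f|_{(x_0,y_0)}=x_0>0$; once this is in hand, Corollary~\ref{arithmetictwo} applies verbatim and yields that $f(K_1,K_2)=K_1\cdot K_2$ contains interior.

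The step I expect to take the most care is establishing this positivity, and it is where the strictness of the hypotheses is used. Since $A_i\geq 0$ and $(x_0,y_0)\in[A_1,B_1]\times[A_2,B_2]$ we have $x_0,y_0\geq 0$, and the strict inequalities immediately rule out $x_0=y_0=0$. The delicate case is when exactly one coordinate vanishes, say $x_0=0$ (which forces $A_1=0$). Then the first family of inequalities collapses to $(\phi_i(B_1)-\phi_{i+1}(A_1))y_0>0$ for all $i$, giving $y_0>0$ together with $\phi_i(B_1)>\phi_{i+1}(A_1)$ for every $i$. I would then argue that, with the maps indexed from left to right, these overlaps force $\bigcup_{i=1}^{p}\phi_i([A_1,B_1])=[A_1,B_1]$; invariance of the convex hull together with uniqueness of the attractor gives $K_1=[A_1,B_1]=[0,B_1]$, a nondegenerate interval. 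As $y_0>0$, we then obtain $K_1\cdot K_2\supseteq[0,B_1]\cdot\{y_0\}=[0,B_1y_0]$, which already has nonempty interior; the case $y_0=0$ is symmetric. Hence in every boundary case the conclusion holds directly, and in the principal regime $x_0,y_0>0$ it follows from Corollary~\ref{arithmetictwo}. The boundary analysis is the only nonroutine ingredient; everything else is the substitution $\partial_x f=y$, $\partial_y f=x$.
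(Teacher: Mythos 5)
Your proof is correct and takes essentially the same route as the paper, which states Corollary \ref{Multiplication} without separate argument as the specialization $f(x,y)=xy$ (so $\partial_x f=y$, $\partial_y f=x$) of the interior part of Corollary \ref{arithmetictwo} --- exactly your main step. Your extra analysis of the boundary cases $x_0=0$ or $y_0=0$ (where strict positivity of the partial derivatives at $(x_0,y_0)$ fails, yet the hypotheses, which only force $x_0,y_0\geq 0$, can still hold) addresses a case the paper silently ignores, so your treatment is if anything more complete than the paper's.
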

\begin{remark}
It deserves to mention the following beautiful formulae\cite{BABA1,Hochman2012}.
Let $K$ be any self-similar set in $\mathbb{R}$. Then
$$\dim_{H}(K\cdot K)=\min\{2\dim_{H}(K), 1\}.$$
Moreover, Hochman and Shmerkin \cite{Hochman2012} proved  the following  general case.

Let $K_1$ and $K_2$ be two self-similar sets with the IFS's
$$\{\phi_i(x)=r_ix+a_i\}_{i=1}^{p}, \{\psi_j(x)=\rho_jx+b_j\}_{j=1}^{q},r_i,\rho_j\in(0,1), a_i,b_j\in \mathbb{R}.$$
If there exist some $r_i$ and $\rho_j$ such that
$$\dfrac{\log r_i}{\log \rho_j}\notin \mathbb{Q},$$
then
$$\dim_{H}(K_1+ K_2)=\dim_{H}(K_1\cdot K_2)=\min\{\dim_{H}(K_1)+\dim_{H}(K_2), 1\}.$$
The above formulae are elegant as they do not need any separation condition. For the critical cases, i.e.
$$\dim_{H}(K_1\cdot K_2)=1,\dim_{H}(K_1+ K_2)=1,$$ it is natural to ask whether
$K_1\cdot K_2$ and $K_1+ K_2$ contain some interiors. Corollaries \ref{arithmetictwo} and \ref{Multiplication} provide some checkable conditions under which $K_1\cdot K_2$ and  $K_1+ K_2$ contain  interiors.
\end{remark}
Now, we consider the linked condition in the Astels' thickness theorem. Without this condition, we still have similar results.
The first result is motivated by the author's previous paper \cite{Jiang2022}. In fact, it is a starting point of analyzing thickness theorems without the linked condition.
\begin{theorem}\label{finitely many}
Let $K_i, i=1,2$ be a Cantor set with convex hull $[A_i, B_i]$. If for any $(x,y)\in [A_1, B_1]\times [A_2, B_2]$,
$$\dfrac{1}{\tau(K_1)}\leq \left|\dfrac{\partial_x f}{\partial_y f}\right|\leq \tau(K_2),$$
then
$f(K_1, K_2)$ is a finite union of closed intervals. Moreover, the number of closed intervals is controlled by
$h_1+v_1+1,$ where
$$h_1=\sharp\left\{O_x:\dfrac{B_2-A_2}{|O_x|}\leq \max_{(x,y)\in [A_1, B_1]\times [A_2, B_2]}\left|\dfrac{\partial_x f}{\partial_y f}\right|\right\}$$
and
$$v_1=\sharp\left\{O_y:\dfrac{|O_y|}{B_1-A_1}\geq \min_{(x,y)\in [A_1, B_1]\times [A_2, B_2]}\left|\dfrac{\partial_x f}{\partial_y f}\right|\right\},$$
 $O_x$ and $O_y$ denote the gaps of $K_1$ and $K_2$, respectively.
\end{theorem}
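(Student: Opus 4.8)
<br>

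The plan is to reduce the two-dimensional covering problem to two one-dimensional sweeping arguments, one controlled by each partial derivative, exactly as in the proof of Theorem~\ref{Cantor}. Consider the image $f(K_1,K_2)$ and suppose we have already produced a closed interval $J$ in the image, realized as $f(x,y)$ for $(x,y)$ ranging over a product of bridges $P_1\times P_2$, where $P_i$ is a bridge of $K_i$. I would argue that as long as we are not forced to cross a gap that is ``too large'' in either coordinate, the image of the product of the refined bridges at the next level still fills out an interval, and the images of adjacent bridge-blocks overlap so that no new component of $f(K_1,K_2)$ is created. The hypothesis $\tfrac{1}{\tau(K_1)}\le|\partial_x f/\partial_y f|\le\tau(K_2)$ is precisely the statement that, at every gap of $K_1$ we might step over, the horizontal jump it induces in the $f$-value (magnified by $|\partial_x f|$) is covered by the vertical thickness of $K_2$ (magnified by $|\partial_y f|$), and symmetrically for gaps of $K_2$, \emph{provided} the gap being crossed is smaller than the whole opposing interval.

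The mechanism is the standard thickness/gap-covering estimate. Using $\tau(K_i)=\inf_n \min\{|L_n|,|R_n|\}/|O_n|$, for any gap $O$ of $K_1$ with bridge neighbor of length at least $\tau(K_1)|O|$, the Mean Value Theorem gives that the change in $f$ across the bridge in the $x$-direction is at least $S_x\,\tau(K_1)|O|$, while the oscillation in $f$ induced by a gap $O$ in the $x$-direction is at most $L_x|O|$; combining with the matching vertical estimates from $K_2$ and the two-sided bound on $|\partial_x f/\partial_y f|$ shows the horizontal gaps are absorbed by vertical bridges and vice versa. Concretely, I would fix a gap $O_x$ of $K_1$ and show that unless $\tfrac{B_2-A_2}{|O_x|}>\max|\partial_x f/\partial_y f|$, the vertical extent of the image over $K_2$ (at a fixed $x$) is wide enough to bridge the horizontal gap in the image caused by $O_x$; the symmetric statement handles gaps $O_y$ of $K_2$. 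Thus a new connected component of $f(K_1,K_2)$ can only be forced by one of the finitely many ``large'' gaps counted by $h_1$ (gaps of $K_1$ too wide to be bridged vertically) or by one of the finitely many large gaps counted by $v_1$. Each such large gap can split the image into at most one additional piece, which yields the bound $h_1+v_1+1$ on the number of closed intervals.

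For the induction producing the intervals, I would set up a sweep: order the first-level bridges of $K_1$ and, within each, the first-level bridges of $K_2$, and track the running image as a union of intervals. Between two horizontally adjacent bridges whose separating gap is \emph{small} (not counted in $h_1$), the overlap estimate above shows the two images merge into a single interval; a gap counted in $h_1$ is the only place where a break may occur. Iterating to all levels, the thickness condition guarantees that no new breaks are introduced at any deeper level, so the finitely many components are already determined at the first level by the large gaps, giving finiteness together with the explicit count. The main obstacle, as in Theorem~\ref{Cantor}, is establishing the key overlap \textbf{Claim}: that over a single bridge the image $f(P_1,K_2)$ (or $f(K_1,P_2)$) is a genuine interval and that images over consecutive small-gap-separated bridges genuinely overlap rather than merely abut. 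This requires carefully combining the Mean Value Theorem bounds $S_i,L_i$ with the thickness inequalities and checking the inequalities are strict enough to force overlap; the two-sided form of the hypothesis is exactly what makes both the horizontal-into-vertical and vertical-into-horizontal absorption work simultaneously, so the bookkeeping of which coordinate's gap is being crossed is where the care is needed.
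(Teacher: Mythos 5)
Your sweep is modeled on the proof of Theorem~\ref{Cantor}, but the step you defer --- ``iterating to all levels, the thickness condition guarantees that no new breaks are introduced at any deeper level'' --- is a genuine gap, not a bookkeeping matter, and it is exactly where the absence of a linkedness hypothesis bites. Your overlap mechanism compares the jump of $f$ across a gap $G$ of $K_1$ with the span of $f$ over the \emph{current} opposing block $J$ of $K_2$: by the mean value theorem along the segment joining the relevant corners this needs $|G|/|J|\le |f_y/f_x|$, which via $|f_x/f_y|\le\tau(K_2)$ reduces to $|G|\le |J|/\tau(K_2)$ (note also that your ``at a fixed $x$'' version is not available, since the image of $\{x\}\times K_2$ is itself full of holes). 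At the first level, where $J=[A_2,B_2]$, this is precisely your smallness condition complementary to $h_1$. But thickness controls a gap of $K_1$ only against the adjacent bridges of $K_1$, and bridges of $K_2$ only against gaps of $K_2$; nothing in the hypotheses keeps $|J|$ comparable to $|G|$ as the sweep descends. For instance, a bridge $[0,1]$ of $K_2$ with gap $(0.001,0.002)$ has local thickness ratio $1$ yet produces the sub-bridge $[0,0.001]$, so after a few refinements the current $J$ can be far shorter than gaps of the current $K_1$-bridge. Then your pairwise estimate fails, and the single-block image $f(I\times J)$ genuinely does break across such gaps; the theorem survives only because those breaks are \emph{healed} by images of other blocks $I\times J''$ at other levels --- a global phenomenon that a local induction on adjacent sub-blocks cannot see. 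This is the very difficulty that linkedness removes in Newhouse/Astels arguments and that Theorem~\ref{Cantor} removes with its $s_{\min}$/$\kappa$ hypotheses and its refinement ordering; your hypotheses offer no substitute. (Two smaller slips: the gaps counted by $h_1,v_1$ need not be first-level gaps, so the components are not ``determined at the first level''; and your ``unless'' clause states the bridging criterion backwards.)

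The paper proves this theorem inside the proof of Theorem~\ref{simple case} by a global, image-side argument that sidesteps healing altogether. If $w\in f([A_1,B_1],[A_2,B_2])$ is missed by $f(K_1,K_2)$, the level set $\Psi_w$ avoids $K_1\times K_2$, so by compactness it is covered by finitely many gap-rectangles $O_x\times[A_2,B_2]$ and $[A_1,B_1]\times O_y$. If both directions occur in the covering, the pointwise slope bound $1/\tau(K_1)\le|f_x/f_y|\le\tau(K_2)$, combined with $\rho_x\ge\tau(K_1)|O_x|$ and $\rho_y\ge\tau(K_2)|O_y|$ for adjacent bridges, forces the monotone level curve to move vertically by at least $|O_x|$ when crossing a bridge of $K_1$ and horizontally by at least $|O_y|$ when crossing a bridge of $K_2$, which is incompatible with the covering; hence $\Psi_w$ lies in a single gap-rectangle. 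A monotone level curve fits inside $O_x\times[A_2,B_2]$ or $[A_1,B_1]\times O_y$ only if that gap is one of the finitely many counted by $h_1$ or $v_1$, and the translation argument (level curves for different $w$ are translates of one another) shows that the missing values trapped in one such rectangle form a single open interval; removing at most $h_1+v_1$ open intervals from the interval $H$ leaves at most $h_1+v_1+1$ closed intervals. If you wish to keep a domain-side proof, you would need something like the Newhouse linked-gap-pair descent rather than block refinement with pairwise overlaps.
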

It is natural to consider a function with multiple variables. The next result is motivated by Moshchevitin\cite{Moshchevitin2000}, who gave  an elegant thickness theorem. However, Moshchevitin does not give a detailed  proof of his thickness theorem.  The linked condition (Moshchevitin called the initial condition) is very complicated.   We combine his  idea with our method, and prove a thickness theorem without linked condition. After removing the linked condition, we find that the proof is similar to that of Theorem \ref{finitely many}.
\begin{theorem}\label{Main}
Let $K_i$ be a Cantor set with convex hull $[A_i,B_i]$, $1\leq i\leq d$. Define a compact set $D=[A_1,B_1]\times \cdots \times [A_d,B_d]$.
If one of the following conditions is satisfied, i.e.
\begin{itemize}
\item [(1)] $$L_i\leq \sum_{j\neq i}S_j\tau(K_j), 1\leq i\leq d;$$
\item [(2)]$$\sum_{i=1}^{d}\dfrac{\tau(K_i)}{\tau(K_i)+r_i}\geq 1,$$
\end{itemize}
then $$f(K_1,K_2,\cdots, K_d)=\{f(x_1,x_2,\cdots, x_d):x_i\in K_i,1\leq i\leq d\}$$ is a union of finitely many closed intervals.
\end{theorem}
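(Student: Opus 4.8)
The plan is to reduce the statement to a refinement (or ``bridge-covering'') argument in the spirit of Newhouse and of the proof of Theorem \ref{finitely many}. Without loss of generality assume $\partial_{x_i}f>0$ for every $i$ on $D$, so that $f$ is strictly increasing in each coordinate and $f(D)=[f(A_1,\dots,A_d),f(B_1,\dots,B_d)]$ is a single closed interval; the goal then becomes to show that $f(D)\setminus f(K_1,\dots,K_d)$ is a finite union of open intervals. I would call $c\in f(D)$ \emph{attainable} if the level set $\{f=c\}$ meets $K_1\times\cdots\times K_d$, and the task is to prove that all but finitely many $c$ are attainable.

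The engine is a single refinement step. Given a target $c$ and a tuple of bridges $(B_1,\dots,B_d)$, one from each $K_i$, with $c\in f(B_1,\dots,B_d)$, I remove the current largest gap, say in coordinate $i$ with gap $O_i$, and replace $B_i$ by its left or right adjacent bridge. By monotonicity this keeps $c$ in the image unless $c$ lies in the induced ``image gap,'' whose length is at most
\[
L_i\,|O_i|-\sum_{j\neq i}S_j\,|B_j|
\]
by the mean value theorem: the first term bounds the jump of $f$ across $O_i$, and the subtracted sum bounds the compensating variation available in the remaining coordinates over their bridges. Thus the gap is covered whenever $L_i|O_i|\le\sum_{j\neq i}S_j|B_j|$, and the refinement may proceed.

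The hypotheses are exactly what guarantees this inequality throughout. I would maintain the invariant $|B_j|\ge \tau(K_j)\,|O_i|$ for every $j$, where $O_i$ is the current largest gap: it holds at the top level, and it is preserved because, after removing $O_i$, the definition of thickness forces each new adjacent bridge of $K_i$ to have length at least $\tau(K_i)|O_i|$, while the next largest gap can only be smaller. Under condition (1), $L_i\le\sum_{j\neq i}S_j\tau(K_j)$, the invariant gives $\sum_{j\neq i}S_j|B_j|\ge\big(\sum_{j\neq i}S_j\tau(K_j)\big)|O_i|\ge L_i|O_i|$, which is precisely the coverability bound. Condition (2) is the Astels--Moshchevitin normalized form; here I would run the same scheme but weight the bridges by the normalized thicknesses $\tau(K_i)/(\tau(K_i)+r_i)$, and the hypothesis $\sum_i\tau(K_i)/(\tau(K_i)+r_i)\ge1$ is exactly what forces the weighted variation to cover each induced gap (for $d=2$ it is equivalent to $\tau(K_1)\tau(K_2)\ge r_1r_2$, a strict weakening of (1)). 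Only the finitely many gaps too large to satisfy the coverability bound already at the coarsest level can fail; since each Cantor set has only finitely many gaps above any positive threshold, these exceptional gaps are finite in number.

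Finally, for any $c$ avoiding the finitely many image gaps produced by those exceptional large gaps, iterating the refinement yields a nested sequence of boxes $B_1^{(k)}\times\cdots\times B_d^{(k)}$ with $c$ in each image; the diameters tend to $0$ (the largest gap shrinks to $0$ as the construction deepens), so the intersection is a single point of $K_1\times\cdots\times K_d$ with $f$-value $c$, proving that $c$ is attainable. Hence the exceptional large gaps cut $f(D)$ into finitely many subintervals on each of which the image is full, and $f(K_1,\dots,K_d)$ is a finite union of closed intervals. I expect the main obstacle to be the bookkeeping under condition (2): translating the normalized-thickness hypothesis into the pointwise coverability inequality while tracking which coordinate is refined, and ensuring that the ``other'' bridges stay large enough throughout, is the delicate point, precisely what the linked condition was engineered to sidestep in the classical Astels and Moshchevitin arguments.
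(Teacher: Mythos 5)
Your overall route---a Newhouse--Astels style bridge refinement driven by a mean-value coverability bound---is genuinely different from the paper's proof, which instead covers the level set $\Psi_w=\{f=w\}$ by finitely many gap-slabs, derives a contradiction via the implicit function theorem when every coordinate direction occurs in the cover, and uses a translation argument when it does not. But as written your argument has real gaps. First, the base case of your invariant is false: $|B_j|\ge\tau(K_j)|O_i|$ at the top level is precisely the linked condition this theorem is designed to dispense with (take the paper's own example $K_1=C$, $K_2=[0,1/9]\cap C$: the top-level largest gap is $(1/3,2/3)$ while $|B_2|=1/9<\tau(K_2)\cdot\tfrac13$). You implicitly concede this with your ``exceptional gaps,'' but your characterization of them---gaps violating coverability at the coarsest level---is wrong: coverability can hold at the coarsest level yet fail at removal time, after other coordinates have been refined. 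Concretely, let $f=x+y+z$, all three thicknesses $1/2$ (so condition (1) holds with equality), hulls $[0,1]$, $[0,1]$, $[0,1/10]$; let $K_2$ have first gap of length $0.305$ with adjacent bridge of length $0.155$, and let $K_1$ have a gap $O$ of length $0.3$. When $O$ is removed (necessarily after $K_2$'s larger gap), coverability reads $0.3\le 0.155+0.1$, which fails, although $0.3\le 1+0.1$ holds at the coarsest level. The correct repair is to note that any already-refined bridge is adjacent to a previously removed, hence at least as long, gap, so it satisfies $|B_j|\ge\tau(K_j)|O|$ automatically; a failure therefore forces some coordinate to still sit at its full hull, and under (1) this yields a positive lower bound on $|O|$, whence finiteness. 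That is the argument you need, and it is not the one you gave.

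Second, and more seriously, condition (2) is not handled at all, and it is the harder half (for $d=2$ it is $\tau_1\tau_2\ge r_1r_2$, strictly weaker than (1)). Under (2) your largest-gap-first rule genuinely fails: take $f(x,y)=x+\epsilon y$ with $\tau(K_1)=\tau(K_2)=1$; then (2) holds, but covering a gap $O$ of $K_1$ requires $|O|\le\epsilon|B_2|$, while your invariant only gives $|B_2|\ge|O|$. What works is to change the selection rule: always remove the gap maximizing $|O_i|\phi_i$, where $\phi_i=L_i+S_i\tau(K_i)$. Then every previously removed gap $G_j$ satisfies $|G_j|\phi_j\ge|O_i|\phi_i$, hence $|B_j|\ge\tau(K_j)|G_j|\ge\tau(K_j)\phi_i|O_i|/\phi_j$, and condition (2) exactly closes the estimate:
$$\sum_{j\ne i}S_j|B_j|\;\ge\;\phi_i|O_i|\sum_{j\ne i}\frac{\tau(K_j)}{\tau(K_j)+r_j}\;\ge\;\phi_i|O_i|\,\frac{r_i}{\tau(K_i)+r_i}\;=\;L_i|O_i|.$$
Your phrase ``weight the bridges by the normalized thicknesses'' gestures at this computation but does not perform it. Finally, the conclusion concerns the whole image, not one refinement path: you must run the scheme over the tree of all choices and observe that once the finitely many oversized gaps have been processed, each box's image equals the union of its children's images, so the union of the images of the depth-$n$ boxes stabilizes at a finite union of closed intervals, which by compactness equals $f(K_1,\dots,K_d)$. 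With these repairs your outline can be completed, and it would then constitute a proof genuinely different from the paper's.
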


The following result generalizes  the first  result of Theorem \ref{Astels} proved by   Astels \cite{Astels}.
\begin{corollary}\label{Astels extension}
Let $K_i$ be a compact set with convex hull $[A_i,B_i]$, $1\leq i\leq d$.
If
$$\sum_{i=1}^{d}\dfrac{\tau(K_i)}{\tau(K_i)+1}\geq 1,$$
then $$K_1+K_2+\cdots+ K_d=\left\{\sum_{i=1}^{d}x_i:x_i\in K_i\right\}$$ is a  union of finitely many closed intervals.
\end{corollary}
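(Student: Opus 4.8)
The plan is to deduce Corollary \ref{Astels extension} from Theorem \ref{Main} by choosing the function $f$ to be the sum and then verifying that condition (2) of the theorem reduces exactly to the hypothesis of the corollary. Specifically, I would apply Theorem \ref{Main} with $f(x_1,\dots,x_d)=\sum_{i=1}^d x_i$, so that $\partial f/\partial x_i \equiv 1$ for every $i$ on all of $D=[A_1,B_1]\times\cdots\times[A_d,B_d]$. With this choice the quantities $L_i=\max_{x\in D}|\partial f/\partial x_i|$ and $S_i=\min_{x\in D}|\partial f/\partial x_i|$ both equal $1$, and hence the distortion ratio $r_i=L_i/S_i=1$ for each $i$.

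The key step is then purely a substitution: under $r_i=1$, condition (2) of Theorem \ref{Main}, namely
\begin{equation*}
\sum_{i=1}^{d}\frac{\tau(K_i)}{\tau(K_i)+r_i}\geq 1,
\end{equation*}
becomes precisely
\begin{equation*}
\sum_{i=1}^{d}\frac{\tau(K_i)}{\tau(K_i)+1}\geq 1,
\end{equation*}
which is the assumed hypothesis of Corollary \ref{Astels extension}. Since all partial derivatives are positive (indeed identically $1$), the sign condition implicit in Theorem \ref{Main} is automatically met, and the theorem applies verbatim. Its conclusion is that $f(K_1,\dots,K_d)$ is a union of finitely many closed intervals; but with the chosen $f$ this continuous image is exactly $K_1+K_2+\cdots+K_d$, giving the statement of the corollary.

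I would also remark briefly on why this genuinely generalizes the first part of Astels' Theorem \ref{Astels}: Astels concludes only that $K_1+\cdots+K_d$ \emph{contains interior}, whereas here we obtain the stronger and more precise structural conclusion that the sumset is a finite union of closed intervals, and we do so without any linked condition. The only point requiring a sentence of care is that the corollary is stated for general compact sets $K_i$ with convex hull $[A_i,B_i]$ rather than for Cantor sets; this is harmless because the thickness $\tau(K_i)$ is well defined in the same way for such compact sets, and Theorem \ref{Main} is itself phrased for Cantor sets whose construction yields the same thickness formalism. I do not anticipate a genuine obstacle here—the corollary is essentially an immediate specialization—so the main thing to get right is simply the bookkeeping that $r_i=1$ for the sum, which is what collapses the general distortion-weighted inequality of Theorem \ref{Main} onto Astels' classical criterion.
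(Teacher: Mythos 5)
Your proposal is correct and is exactly the paper's intended derivation: the corollary is an immediate specialization of Theorem \ref{Main}, condition (2), to $f(x_1,\dots,x_d)=\sum_{i=1}^d x_i$, for which $L_i=S_i=1$, hence $r_i=1$, so the distortion-weighted inequality collapses to $\sum_{i=1}^{d}\tau(K_i)/(\tau(K_i)+1)\geq 1$ and the conclusion follows verbatim. Your closing remarks (the comparison with Astels and the harmless passage from Cantor sets to compact sets with the same thickness formalism) match the paper's own remark following the corollary, so there is nothing further to reconcile.
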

\begin{remark}
Astels proved,  under the  same condition in theorem,  that $$K_1+K_2+\cdots+ K_d$$ contains some interiors. We, however, strengthen Astels' result  and obtain the exact structure of $K_1+K_2+\cdots+ K_d$. Generally, without the linked condition, we can only obtain that $K_1+K_2+\cdots+ K_d$ is a finite union of closed intervals. We give one example in Section 3.
Corollary \ref{Astels extension}  implies the following result which generalizes the Newhouse thickness theorem.
Let $\{K_i\}_{i=1}^{2}$ be two Cantor sets, if $\tau(K_1)\tau(K_2)\geq 1$, which is equivalent to $$\dfrac{\tau(K_1)}{1+\tau(K_1)}+\dfrac{\tau(K_2)}{1+\tau(K_2)}\geq1,$$ then
$$K_1+K_2=\{x+y:x\in K_1, y\in K_2\}$$ is a union of finitely many closed intervals.
The above result can be proved by the Newhouse's gap lemma \cite{Takahashi}.
\end{remark}
Theorem \ref{Main} or Theorem \ref{Cantor} yields  the following result.
\begin{corollary}\label{multiplication}
Let $\{K_i\}_{i=1}^{\infty}$ be   Cantor sets, where $K_i$ and $ K_j$ have different convex hulls, $i\neq j.$ If $$\inf_{i\geq 1}\tau(K_i)>0, \min_{1\leq i\leq d} \min(K_i)>0,$$
then there exist $k$ different Cantor sets $\{K_{i_j}\}_{j=1}^{k}\subset \{K_i\}$ such that
$$\Pi_{j=1}^k K_{i_j}=\{\Pi_{j=1}^k x_{i_j}:x_{i_j}\in K_{i_j}\}$$ is a finitely union of closed intervals.
\end{corollary}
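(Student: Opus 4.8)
The plan is to apply Theorem~\ref{Cantor} to the multiplication function $f(x_1,\dots,x_k)=\prod_{j=1}^{k}x_{i_j}$ on the box $D=\prod_{j=1}^{k}[A_{i_j},B_{i_j}]$ determined by a suitable finite subfamily $\{K_{i_j}\}_{j=1}^{k}$. Since the hypothesis gives $\inf_i \min(K_i)=a_0>0$, every coordinate on $D$ is positive, so each partial derivative $\partial_{x_i}f=\prod_{l\neq i}x_{i_l}>0$ and the sign hypothesis of Theorem~\ref{Cantor} holds automatically. The second hypothesis reads $s_{min}\sum_{j\neq i}(B_{i_j}-A_{i_j})\partial_{x_j}f-\kappa(B_{i}-A_{i})\partial_{x_i}f\ge 0$; dividing by the common factor $\prod_l x_{i_l}>0$ makes the products cancel and leaves the much simpler requirement
\[
s_{min}\sum_{j\neq i}\frac{B_{i_j}-A_{i_j}}{x_{i_j}}\ge \kappa\,\frac{B_{i}-A_{i}}{x_{i}},
\]
which, since $(B-A)/x$ decreases in $x$, is worst at $x_{i_j}=B_{i_j}$ and $x_i=A_i$; thus it suffices to verify $s_{min}\sum_{j\neq i}\frac{B_{i_j}-A_{i_j}}{B_{i_j}}\ge\kappa\,\frac{B_{i}-A_{i}}{A_{i}}$ for every $i$.

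The decisive structural point is that, after this cancellation, the left-hand side is a sum of $k-1$ strictly positive terms while the right-hand side is a single term; enlarging the subfamily makes the left side grow roughly linearly in $k$, whereas the right side stays bounded. So I would select the subfamily to exploit this. Writing $\rho_l=B_{i_l}/A_{i_l}>1$, the left side is at least $s_{min}(k-1)\min_l(1-\rho_l^{-1})$ and the right side at most $\kappa\max_l(\rho_l-1)$. Hence it is enough to choose $k$ sets whose ratios $\rho_l$ lie in a fixed bounded range away from $1$, so that $\min_l(1-\rho_l^{-1})$ is bounded below and $\max_l(\rho_l-1)$ is bounded above, and then take $k$ large. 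Because there are infinitely many sets with pairwise different convex hulls and $A_i\ge a_0>0$, a pigeonhole argument produces, for every prescribed $k$, such a subfamily whenever the ratios remain bounded: infinitely many of the $\rho_i$ either accumulate near $1$ or fall into a common bounded interval, and in either case one extracts $k$ of them with comparable ratios. The uniform thickness bound $\inf_i\tau(K_i)=\tau_0>0$ is what should guarantee that, along any such finite subfamily, the associated quantities satisfy $s_{min}\ge s_0>0$ and $\kappa\le\kappa_0<\infty$ with constants independent of $k$.

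The \textbf{main obstacle} is precisely this selection step together with the uniform control of $s_{min}$ and $\kappa$: one must show that uniform positivity of the thicknesses forces $s_{min}$ to stay bounded away from $0$ and $\kappa$ to stay bounded above as the subfamily grows, and one must rule out (or separately treat) the degenerate situation in which the convex hulls escape to infinity so that the $\rho_i$ diverge and no bounded range contains infinitely many of them. Once a subfamily $\{K_{i_j}\}_{j=1}^{k}$ satisfying $s_{min}(k-1)\min_l(1-\rho_l^{-1})\ge\kappa\max_l(\rho_l-1)$ is fixed, Theorem~\ref{Cantor} applies verbatim and yields $\prod_{j=1}^{k}K_{i_j}=f(F_1^{(1)},\dots,F_1^{(k)})$, a union of finitely many closed intervals, which is the assertion. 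One may instead run the same selection through Theorem~\ref{Main}, but Theorem~\ref{Cantor} is preferable here because the cancellation of $\prod_l x_{i_l}$ is exactly what prevents the product structure from defeating the linear-in-$k$ growth.
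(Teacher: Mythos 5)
First, a caveat about the comparison: the paper never actually proves this corollary. It is stated in the introduction with the single sentence that ``Theorem \ref{Main} or Theorem \ref{Cantor} yields'' it, and Section 2 never returns to it. So your proposal has to stand on its own, and it has two genuine gaps; you flag both yourself, but they are not minor loose ends --- each one is exactly where the mathematical content of the corollary lies.

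The first gap is fatal to the route you chose. The hypotheses of Theorem \ref{Cantor} are phrased in terms of $s_{min}$ and $\kappa$, which are quantities of a \emph{chosen Moran construction}, not of the set, and they are not controlled by thickness; the corollary's hypotheses involve thickness only. Your hope that ``uniform positivity of the thicknesses forces $s_{min}$ to stay bounded away from $0$'' fails for the natural (decreasing-gap-size) construction: take $K\subset[0,1]$ with two gaps $O'=(a',b')$ and $O=(a,b)$ of equal length, separated by the bridge $[b',a]$ with $a-b'=|O|$, and to the right of $b$ a ``fat'' piece of diameter $D\gg |O|$ all of whose gaps are shorter than $|O|$ (e.g.\ a chain of abutting affine middle-third copies of diameter $|O|$); one checks $\tau(K)\geq 1$, yet when $O$ is removed in the size-ordered construction its father is $[b',1]$, so the bridge $[b',a]$ has $|I|/|\hat I|\le |O|/(2|O|+D)$, which is arbitrarily small. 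One can repair this particular example by re-scheduling the gap removals, but the statement you actually need --- every Cantor set with $\tau\ge\tau_0$ admits \emph{some} Moran construction with $s_{min}\ge s_0(\tau_0)>0$ --- is a nontrivial lemma that neither you nor the paper proves, and without it Theorem \ref{Cantor} simply cannot be fed by the corollary's hypotheses. (The paper itself only records the opposite implication, $s_{min}\ge\kappa^+\Rightarrow\tau\ge1$.) The only tool in the paper whose hypotheses are stated purely in terms of thickness is Theorem \ref{Main}, with $r_i=L_i/S_i=\prod_{j\neq i}(B_j/A_j)$ for $f=\prod$ --- at the price of exactly the multiplicative-in-$k$ growth you were trying to avoid.

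The second gap is the selection step, which does not work as stated even granting the above. If the ratios $\rho_i$ accumulate at $1$, they need not contain $k$ members of comparable size (take $\rho_i=1+2^{-i}$: every dyadic band holds one ratio, and your Theorem-\ref{Cantor} inequality then reduces to $s_{min}\gtrsim\kappa$, which more sets do not help); this case is instead handled by Theorem \ref{Main}(2), since one can pick $k\ge 1+2/\tau_0$ sets with $\prod_j\rho_{i_j}\le 2$, making every $r_i\le 2$. Conversely, if the ratios sit in a band $[1+\delta,M]$ (hulls $[1,2+1/i]$, say, which is perfectly consistent with all hypotheses), Theorem \ref{Main}(2) fails for every subfamily once $\tau_0<2$, because $r_i\ge(1+\delta)^{k-1}$ swamps the $k$ summands, and this is precisely the regime where your computation needs the unavailable $s_{min}$; the natural repair there is different again --- conjugate by $\log$ (bounded distortion $\le M$ on $[A_i,B_i]$, so $\tau(\log K_i)\ge \tau_0/M$) and apply Corollary \ref{Astels extension} to $\sum_j \log K_{i_j}$ with $k\ge 1+M/\tau_0$. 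So the two theorems you weigh against each other in your last sentence in fact cover complementary regimes, and a correct proof needs both (or the logarithmic reduction) plus an argument splitting into these cases. Finally, the regime you propose to ``rule out or separately treat'' --- ratios diverging so fast that for no $M$ are there $1+M/\tau_0$ sets with $\rho_i\le M$ (hulls $[1,2^{2^i}]$, say) --- is not excluded by the hypotheses and is reached by none of the above; that is not a detail to wave at but the point at which your proof, and for that matter the paper's unwritten one, stops.
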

Corollary \ref{multiplication} cannot be reproved  directly by Astels \cite{Astels} and Takahashi\cite{Takahashi} as Astels' result only states that the  multiple sum of Cantor sets  comprises interior, provided  that the condition in Corollary \ref{Astels extension} is satisfied.

This paper is arranged as follows. In Section 2, we prove the main results. In Section 3, we give some applications and examples. Finally, we give some remarks and pose some problems.
\section{Proofs of Main theorems}
\begin{proof}[\textbf{Proof of Theorem \ref{Cantor}}]
Let $E_{0}^{(i)}=[A_i, B_i], 1\leq i\leq d$. Suppose we have defined $(E_{k}^{(1)},\cdots ,E_{k}^{(d)} )$ for some $k\geq 0$.
Take a bridge   $I^{\prime}$ from $ E_{k}^{(i)}$ for some $i\in\{1,2,\cdots,d\}$ such that $$\dfrac{|I^{\prime}|}{|\hat{I^{\prime}}|}=\max_{1\leq i\leq d}\max\left\{\dfrac{|I|}{|\hat{I}|}:I \mbox{ is a bridge  taken from }E_{k}^{(i)}\right\}.$$
If $I^{\prime}$ is from some $ E_{0}^{(i)}, 1\leq  i\leq d$, i.e. $I^{\prime}=E_{0}^{(i)}=[A_i, B_i],$ then we let
$\hat{I^{\prime}}=I^{\prime}$. Therefore, $$\dfrac{|I^{\prime}|}{|\hat{I^{\prime}}|}=1.$$

Roughly speaking, comparing with the associated  fathers, the bridge $I^{\prime}$  has  the largest ``similarity ratio".

Then we let $$E_{k+1}^{(i)}=(E_{k}^{(i)}\setminus I^{\prime})\cup \{\cup_{j=1}^{n^{\prime}}I_{j}^{\prime}:n^{\prime}\in \mathbb{N} \}, E_{k+1}^{(j)}=E_{k}^{(j)}, j\neq i,$$
where each $I_{j}^{\prime}$ is one of the offspring of $I^{\prime}$.
By our algorithm, for  $k=d$, each
$E_{k}^{(i)}, 1\leq i\leq d$ is exactly the union of all the bridges of $K_i$ in the first step.

With the construction of $(E_{k}^{(1)},\cdots ,E_{k}^{(d)} )$, we  may prove by induction that for each $k\in \mathbb{N}$,
\begin{equation}\label{keq}
\alpha_{k}\geq \beta_{k},
\end{equation}
where
 \begin{eqnarray*}
 \alpha_k&=&\min_{1\leq i\leq d}\min\left\{\dfrac{|I|}{B_i-A_i}:I\mbox{ is a birdge of } E_{k}^{(i)}\right\},\\
\beta_k&=&s_{\min}\left(\max_{1\leq i\leq d}\max\left\{\dfrac{|I|}{B_i-A_i}:I\mbox{ is a birdge of } E_{k}^{(i)}\right\}\right).
\end{eqnarray*}
Moreover, for  each $i\in \{1,2,\cdots,d\}$, the number of closed intervals in  $E^{(i)}_{k}$ tends to $+\infty$ as $k\to +\infty,$ and the length of each closed interval in  $E^{(i)}_{k}$ goes to $0$ as $k\to +\infty.$

 \noindent   Note that $$K_i=\bigcap_{k=d}^{\infty}E_{k}^{(i)},1\leq i\leq d.$$
Since $E_{k}^{(i)}\supset E_{k+1}^{(i)}$ for any $k\geq 1$ and $f$ is a continuous function, it follows that
$$f(K_1,\cdots, K_d)=\bigcap_{k=d}^{\infty}f(E_{k}^{(1)},\cdots,E_{k}^{(d)}).$$
So, to prove $f(K_1,\cdots, K_d)$ is a union of finitely many  closed intervals, it suffices to prove
\begin{equation}\label{eq1}
f(E_{k}^{(1)},\cdots,E_{k}^{(d)})=f(E_{k+1}^{(1)},\cdots,E_{k+1}^{(d)})
\end{equation} for any $k\geq d$.

By the definition of $$E_{k}^{(i)}, 1\leq i\leq d,$$ for any $k\geq d$  there exists a unique $i_0$ ($1\leq i_0\leq d$) such that
$$E_{k+1}^{(i_0)}\subsetneq E_{k}^{(i_0)}, E_{k+1}^{(j)}=E_{k}^{(j)},j\neq i_0,$$
and
$$E_{k+1}^{(i_0)}=(E_{k}^{(i_0)}\setminus I_{i_0})\cup \{\cup_{j=1}^{n^{\prime}}I_{{i_0}j}:n^{\prime}\in  \mathbb{N}\},$$
where $I_{i_0}$ is a bridge  taken from $E_{k}^{(i_0)}$, and $\cup_{j=1}^{n^{\prime}}I_{{i_0}j}$ is the remaining bridges of $I_{i_0}$ in the next step.
So, it suffices to prove
\begin{equation}\label{eq21}
f(E_{k}^{(1)},\cdots,E_{k}^{(i_0-1)},E_{k+1}^{(i_0)},E_{k,}^{(i_{0}+1)}\cdots, E_{k}^{(d)})= f(E_{k}^{(1)},\cdots,E_{k}^{(i_0)},\cdots, E_{k}^{(d)}).
\end{equation}
 Notice that each $E_{k}^{(i)},1\leq i\leq d$, is a  union of finitely many closed intervals, and we have the following property: for any closed interval  $$A_{i_j}^{(j)}, 1\leq j\leq d, 1\leq i_j\leq k_j, k_j\in \mathbb{N}^{+} ,$$
 $$f(\cup_{i_1=1}^{k_1}A_{i_1}^{(1)}, \cup_{i_2=1}^{k_2}A_{i_2}^{(2)}, \cdots,\cup_{i_d=1}^{k_d}A_{i_d}^{(d)})=\cup{i_1}\cdots\cup_{i_d}f(A_{i_1}^{(1)}, A_{i_2}^{(2)}, \cdots, A_{i_d}^{(d)}).$$
  Therefore, equation (\ref{eq21}) can be proved via the following claim.

\noindent\textbf{Claim:}

 Let $I_j=[a_j,b_j]$ be one of  the closed intervals taken from $ E_{k}^{(j)}, 1\leq j\leq d$.
 In particular,
 $$I_{i_0}=[a_{i_0},b_{i_0}], I_{i_0k}=[a_{i_0k},b_{i_0k}], 1\leq k\leq n^{\prime} \mbox{ for some }n^{\prime}.$$
 We suppose without loss generality that $a_{i_0k}<a_{i_0(k+1)},1\leq k\leq n^{\prime}-1.$
Then
$$f(P)-f(Q)\geq 0$$ for any $1\leq k\leq n^{\prime}-1,$where
 \begin{eqnarray*}
 P&=&(b_1,\cdots, b_{i_0-1}, b_{i_0k},b_{i_0+1} \cdots,b_{d})\\
Q&=&(a_1,\cdots, a_{i_0-1}, a_{i_0k+1},a_{i_0+1} \cdots,a_{d}).
\end{eqnarray*}
By Taylor's theorem, there exists some $(\zeta_1, \cdots, \zeta_d)\in \mathbb{R}^d$
such that
 \begin{eqnarray*}
f(P)-f(Q)&=&\sum_{j\neq i_0}\partial_{x_j}f(\zeta_1, \cdots, \zeta_d) (b_j-a_j) \\
&+&\partial_{x_{i_0}}f(\zeta_1, \cdots, \zeta_d) (b_{i_0k}-a_{i_0k+1}).
\end{eqnarray*}
By (\ref{keq}),  we have $r_{I_j}\geq s_{min}r_{I_{i_0}},j\neq i_0,1\leq j\leq d$.
Note that $$b_j-a_j=|I_j|:=r_{I_j}(B_j-A_j)\geq s_{min}r_{I_{i_0}}(B_j-A_j),$$ and  $-b_{i_0k}+a_{i_0k+1}$ is the length of some gap of $I_{i_0}$ in the next level.
By virtue of  the definition of $\kappa$, we have
$$\dfrac{(-b_{i_0k}+a_{i_0k+1})}{|I_{i_0}|}\leq \kappa.$$
Therefore,
$$(b_{i_0k}-a_{i_0k+1})\geq -|I_{i_0}|\kappa=-r_{I_{i_0}}(B_{i_0}-A_{i_0})\kappa.$$
Hence,
 \begin{eqnarray*}
f(P)-f(Q)&\geq&\sum_{j\neq i_0}\partial_{x_j}f(\zeta_1, \cdots, \zeta_d)s_{min} r_{I_{i_0}}(B_j-A_j) \\
&-&\partial_{x_{i_0}}f(\zeta_1, \cdots, \zeta_d) r_{I_{i_0}}(B_{i_0}-A_{i_0})\kappa\\
&\geq& 0.
\end{eqnarray*}
The last inequality is due to the condition in theorem. We  finish the proof.
\end{proof}

\begin{proof}[\textbf{Proof of Theorem \ref{arithmetic}}]
The proof of Theorem \ref{arithmetic} is similar to that of Theorem \ref{Cantor}.
Let $\Sigma_{*,i},i=1,2,\cdots, d$   denote the collection of finite words over alphabet $\{1,2,\cdots,\ell_{i}\}$.
We suppose that each $\Sigma_{*,i},i=1,2,\cdots, d$ also contains the empty word $\varepsilon$. Let $\phi_{\varepsilon}=id$ be the identity map of $\mathbb{R}$. For a word $I\in \Sigma_{*,i}, i\in\{1,2,\cdots,d\}$, we write $|I|$ for the length of $I$. For simplicity, we let $r_I$ be the similarity ratio of the similitude $\phi_I$.

Let $s_{min}=\min\{r_{k,i}:1\leq i\leq d,1\leq k\leq \ell_i\}$.
Now, we  recursively construct a sequence $\{(\Omega_{k,1},\Omega_{k,2},\cdots,\Omega_{k,d})\}_{k\geq 1}$ of a $d$-tuple of subsets of $\Sigma_{*,1}\times \cdots \times \Sigma_{*,d}$. First, we let $$\Omega_{1,1}=\Omega_{1,2}=\cdots=\Omega_{1,d}=\{\varepsilon\}.$$
Suppose we have defined the tuple
$(\Omega_{k,1},\Omega_{k,2},\cdots,\Omega_{k,d})$ for some $k$.  We take one word $I^{\prime}$ from $\cup_{i=1}^{d}\Omega_{k,i}$ such that its corresponding  similarity ratio, denoted by $L_{I^{\prime}}$,
 is maximal among the associated similitudes in $$\bigcup_{1\leq i\leq d}\bigcup_{I\in\Omega_{k,i}}\phi_{I} .$$
Namely,
$$L_{I^{\prime}}=\max\{r_{I}:I\in\Omega_{k,i},1\leq i\leq d\}.$$
Suppose $I^{\prime}\in \Omega_{k,i}$ for some $i$, $1\leq i\leq d$,  then we let
$$\Omega_{k+1,i}=(\Omega_{k,i}\setminus I^{\prime})\cup \{I^{\prime}j: j=1,2,\cdots, \ell_i\}$$
and
$$\Omega_{k+1,j}=\Omega_{k,j},  j\neq i.$$
By the above construction, for each $k\geq 1$, we  may prove by induction that
$$\min\{r_{I}:I\in\Omega_{k,i},1\leq i\leq d\}\geq  s_{min}\max\{r_{I}:I\in\Omega_{k,i},1\leq i\leq d\}.$$
Moreover,
$$\inf\{|I|:I\in\Omega_{k,i}\}\to \infty \mbox{ as }k\to \infty.$$

 \noindent   Note that $$K_i=\bigcap_{k=1}^{\infty}E_{k,i},1\leq i\leq d,$$ where
$$E_{k,i}=\cup_{I\in \Omega_{k,i}}\phi_{I}([A_i,B_i]).$$
Since $E_{k,i}\supset E_{k+1,i}$ for any $k\geq 1$ and $f$ is a continuous function,
$$f(K_1,\cdots, K_d)=\bigcap_{k=d}^{\infty}f(E_{k,1},\cdots,E_{k,d}).$$
Hence, to prove $f(K_1,\cdots, K_d)$ is a union of finitely many  closed intervals, it suffices to prove
\begin{equation}\label{eq1}
f(E_{k,1},\cdots,E_{k,d})=f(E_{k+1,1},\cdots,E_{k+1,d})
\end{equation} for any $k\geq d$.

By the definition of $$\Omega_{k,i}, 1\leq i\leq d,1\leq k\leq \ell_i,$$  for any $k\geq 1$  there exists a unique $i_0$ ($1\leq i_0\leq d$) such that
$$\Omega_{k+1,i_0}\subsetneq \Omega_{k,i_0}, \Omega_{k+1,j}=\Omega_{k,j},  j\neq i_0.$$
We suppose
$$\Omega_{k+1,i_0}=(\Omega_{k,i_0}\setminus I^{\prime})\cup \{I^{\prime}j: j=1,2,\cdots, \ell_{i_0}\}.$$
Thus,
$$E_{k+1,i_0}\subsetneq E_{k,i_0}, E_{k+1,j}=E_{k,j},j\neq i_0,$$
and
 $$E_{k+1,i_0}=\cup_{I\in \Omega_{k,i_0}\setminus \{I^{\prime}\}}\phi_{I}([A_{i_0},B_{i_0}])\cup ( \cup_{j=1}^{\ell_{i_0}}\phi_{I^{\prime}j}([A_{i_0},B_{i_0}])).$$
Hence, to prove equation(\ref{eq1}), it suffices to prove
\begin{equation}\label{eq2}
f(E_{k,1},\cdots,E_{k,i_0-1},E_{k+1,i_0},E_{k,i_0+1}\cdots, E_{k,d})= f(E_{k,1},\cdots,E_{k,i_0},\cdots, E_{k,d})
\end{equation}
for any $k\geq d.$
 Notice that each $E_{k,i},1\leq i\leq d$, is a  union of finitely many closed intervals. So,
 equation (\ref{eq2}) can be proved via the following claim.

\noindent\textbf{Claim:}

 Let $\phi_{I_j}([A_j, B_j])$ be one of  the closed intervals taken from $ E_{k,j},I_j\in \Omega_{k,j}, 1\leq j\leq d$.
Then
$$f(P_1)-f(Q_1)\geq 0$$ for any $1\leq k\leq \ell_{i_0}-1,$where
 \begin{eqnarray*}
P_1&=&(\phi_{I_1}(B_1),\cdots, \phi_{I_{i_0-1}}(B_{i_0-1}), \phi_{I_{i_0}k}(B_{i_0}),\phi_{I_{i_{0}+1}}(B_{i_0+1}) \cdots,\phi_{I_d}(B_d)) \\
Q_1&=&(\phi_{I_1}(A_1),\cdots, \phi_{I_{i_0-1}}(A_{i_0-1}), \phi_{I_{i_0}(k+1)}(A_{i_0}),\phi_{I_{i_0+1}}(A_{i_0+1}) \cdots,\phi_{I_d}(A_d)).
\end{eqnarray*}
 In other words, if the  \textbf{Claim} is proved, then we finish the proof of equation (\ref{eq2}).
By Taylor's theorem, there exists some $(\eta_1, \cdots, \eta_d)\in \mathbb{R}^d$
such that
 \begin{eqnarray*}
f(P_1)-f(Q_1)&=&\sum_{k\neq i_0}\partial_{x_k}f(\eta_1, \cdots, \eta_d) r_{I_k}(B_k-A_k) \\
&+&\partial_{x_{i_0}}f(\eta_1, \cdots, \eta_d) r_{I_{i_0}}(\phi_{k,i_0}(B_{i_0})-\phi_{k+1,i_0}(A_{i_0})).
\end{eqnarray*}
Note that for any $I\in \Omega_{k,i}, 1\leq i\leq d $, $$r_{I}\geq s_{min}r_{I_{i_0}}=s_{min}r_{I^{\prime}}, \mbox{ and }\partial_{x_{i}}f>0.$$ Therefore, by the conditions in Theorem \ref{arithmetic}
 \begin{eqnarray*}
f(P_1)-f(Q_1)&\geq&\sum_{k\neq i_0}\partial_{x_k}f(\eta_1, \cdots, \eta_d)s_{min} r_{I_{i_0}}(B_k-A_k) \\
&+&\partial_{x_{i_0}}f(\eta_1, \cdots, \eta_d) r_{I_{i_0}}(\phi_{k,i_0}(B_{i_0})-\phi_{k+1,i_0}(A_{i_0}))\\
&= & r_{I_{i_0}}\left(\sum_{k\neq i}\partial_{x_k}f(\eta_1, \cdots, \eta_d)s_{min}(B_k-A_k)\right) \\
&+&r_{I_{i_0}}\partial_{x_i}f(\eta_1, \cdots, \eta_d) (\phi_{k,i_0}(B_{i_0})-\phi_{k+1,i_0}(A_{i_0}))\\
&\geq& 0.
\end{eqnarray*}
The last inequality is due to the condition in theorem. We  finish the proof.
\end{proof}
We now prove Theorem \ref{Main} for the case $d=2$. This simple case is helpful to understand the  complete proof of  Theorem \ref{Main}.
\begin{theorem}\label{simple case}
Let $K_i, i=1,2$ be a Cantor set with convex hull $[A_i, B_i]$. If for any $(x,y)\in [A_1, B_1]\times [A_2, B_2]$,
we have one of the following conditions, i.e.
\begin{itemize}
\item[(1)]   $$\tau(K_1)\tau(K_2)\geq r_1r_2$$
\item[(2)] $$\dfrac{1}{\tau(K_1)}\leq \left|\dfrac{\partial_x f}{\partial_y f}\right|\leq \tau(K_2),$$
\end{itemize}
then
$f(K_1, K_2)$ is a finitely union of closed intervals.
\end{theorem}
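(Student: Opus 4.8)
The plan is to treat the two hypotheses separately, because condition (2) already falls under an earlier result while condition (1) requires the genuine thickness mechanism. First I would observe that condition (2), namely $\frac{1}{\tau(K_1)}\le\left|\frac{\partial_x f}{\partial_y f}\right|\le\tau(K_2)$ on the whole box $[A_1,B_1]\times[A_2,B_2]$, is verbatim the hypothesis of Theorem \ref{finitely many}. Hence under condition (2) the conclusion is immediate, and one even inherits the explicit bound $h_1+v_1+1$ on the number of intervals. So the real content lies in condition (1).

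For condition (1) I would first record the elementary identity $\tau(K_1)\tau(K_2)\ge r_1r_2 \iff \frac{\tau(K_1)}{\tau(K_1)+r_1}+\frac{\tau(K_2)}{\tau(K_2)+r_2}\ge 1$, obtained by a single cross-multiplication. This recasts condition (1) as the Astels-type threshold and identifies it as the $d=2$ instance of condition (2) of Theorem \ref{Main}. Then I would run exactly the bridge-opening scheme of the proof of Theorem \ref{Cantor}: approximate $K_i=\bigcap_k E_k^{(i)}$ by finite unions of bridges, and at each stage open the single bridge (across both sets) whose ratio to its father is largest, passing from $E_k^{(i)}$ to $E_{k+1}^{(i)}$. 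As there, it suffices to show that each opening leaves the image unchanged, $f(E_k^{(1)},E_k^{(2)})=f(E_{k+1}^{(1)},E_{k+1}^{(2)})$, for all but finitely many steps; the finitely many exceptional large gaps are precisely what forces a finite union rather than a single interval once the linked hypothesis is dropped.

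The decisive step, and what I expect to be the main obstacle, is the gap-filling estimate. When the opened bridge lies, say, in $K_1$ and removes a gap $O$ leaving children $L,R$, one must show that no image point is lost, i.e.\ every value $f(x_0,y_0)$ with $x_0\in O$ is recovered from $(L\cup R)\times E_k^{(2)}$. Linearizing $f$ by Taylor's theorem exactly as in the \textbf{Claim} of Theorem \ref{Cantor}, the image jump across $O$ has size comparable to $|\partial_x f|\,|O|$, and this must be covered by sweeping $y$ through the bridges of $E_k^{(2)}$, which contributes on the order of $|\partial_y f|$ times a bridge length. The delicate point is that when $\tau(K_2)<1$ the set $K_2$ cannot fill its own gaps, so the compensation genuinely relies on the thickness of $K_1$; this is the Newhouse gap-lemma interleaving, and the product bound $\tau(K_1)\tau(K_2)\ge r_1r_2$, equivalently the Astels threshold above, is exactly what guarantees that a bridge of one set always straddles a gap endpoint of the other, so that the chaining closes.

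Finally I would handle the bookkeeping that turns this into a \emph{finite} union: one must control how many gaps are too large for the above estimate to apply, and show their number is finite, mirroring the counts $h_1$ and $v_1$ of Theorem \ref{finitely many}. Once only finitely many such gaps survive, they cut the convex hull into finitely many pieces on each of which the filling estimate holds for every remaining gap, so each piece maps to a single interval and $f(K_1,K_2)$ is a finite union of closed intervals. I anticipate that verifying the threshold inequality uniformly along the algorithm, rather than the individual covering inequality, will be the most technical portion, since it is here that the product condition must be shown to persist under the balanced bridge-opening rule.
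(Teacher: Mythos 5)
Your disposal of condition (2) by citing Theorem \ref{finitely many} is circular in this paper: that theorem is never proved independently; the paper derives it (including the count $h_1+v_1+1$) from \textbf{Case 2} of the proof of the present theorem. More seriously, your treatment of condition (1) rests on a mechanism that cannot deliver the key estimate. The bridge-opening induction of Theorem \ref{Cantor} verifies its Claim from two \emph{child-versus-father} inequalities: a revealed gap $G$ of the opened bridge $I'$ satisfies $|G|\le\kappa|I'|$, and, by the balanced opening rule, every current bridge of the other set is bounded below in terms of $s_{\min}$ and $|I'|$. A thickness hypothesis supplies neither: thickness compares a gap only with its two \emph{adjacent} bridges inside the \emph{same} set, so $\tau(K_1)\tau(K_2)\ge r_1r_2$ gives no upper bound on $|G|/|I'|$ and no lower bound on the other set's current bridges --- there is simply no cross-set inequality to plug into a Claim-type computation. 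And image-equality genuinely fails at some steps: for $J=[0,1/9]\cap C$ and $C$ one has $\tau(J)\tau(C)=1\ge r_1r_2$ (with $f(x,y)=x+y$), yet opening $[0,1]$ into $[0,1/3]\cup[2/3,1]$ loses values, because the revealed gap $(1/3,2/3)$ is longer than all of $J$; that is why $J+C$ has two components. So the ``finitely many exceptional steps'' are not bookkeeping --- they are the entire theorem --- and your framework has no tool for them: once equality fails at one step the induction says nothing afterwards, the sub-rectangles you would recurse on satisfy no linked condition, and a priori the splitting can cascade through infinitely many scales (a gap of $K_2$ longer than a piece of $K_1$, then a gap of that piece of $K_1$ longer than the matching piece of $K_2$, and so on). Your sentence that the product bound ``guarantees that a bridge of one set always straddles a gap endpoint of the other'' is exactly the assertion that exceptional steps never occur, which is false once the linked condition is dropped.

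The paper's proof goes in the opposite direction, arguing on missing values, and this is the idea absent from your sketch. For $w\in f([A_1,B_1],[A_2,B_2])\setminus f(K_1,K_2)$, the compact level set $\Psi_w$ is covered by finitely many gap-slabs $O_x\times[A_2,B_2]$ and $[A_1,B_1]\times O_y$. If both orientations occur, one takes the slab minimizing $|O|\phi$ (with $\phi_x=L_x+S_x\tau(K_1)$, $\phi_y=L_y+S_y\tau(K_2)$), follows the global implicit function $y=g(x)$ across a bridge adjacent to a crossing point, and the mean value theorem together with $\rho_x\ge\tau(K_1)|O_x|$ yields $\phi_y|O_y|>S_x\tau(K_1)|O_x|+S_y\tau(K_2)|O_y|$, hence $1>\frac{\tau(K_1)}{\tau(K_1)+r_1}+\frac{\tau(K_2)}{\tau(K_2)+r_2}$, contradicting precisely the equivalence you correctly recorded. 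If only one orientation occurs, $\Psi_w$ sits inside a single slab, which forces $|O|\ge c>0$ since $|g'|$ is pinched between positive constants; hence only finitely many gaps can trap a level curve, and for each such gap the trapped curves are mutual translates, so the corresponding $w$ fill an open interval. The missing values are therefore a finite union of open intervals, which is the conclusion. Both halves of your proposal ultimately require this covering argument, and no substitute for it appears in your plan.
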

\begin{proof}
We only prove the statement under the first condition. The second is similar.
First, it is easy to prove $$f(K_1, K_2)\subset H:=f([A_1, B_1],  [A_2, B_2]) $$ as $K_i\subset [A_i,B_i], 1\leq i\leq 2.$
 If  $w\in H$ and $w\notin f(K_1,K_2)$, then
the curve  $f(x,y)=w$ does not meet $K_1\times K_2.$
We define
$$\Psi_w=\{(x,y)\in [A_1, B_1]\times  [A_2, B_2]: f(x, y)=w\}.$$
Hence, we can find   countably  many rectangles of the form $$ R=O_x \times  [A_2, B_2]   \mbox{ or } [A_1, B_1]\times O_y$$ such that
 $\Psi_w$ is covered by these rectangles,
 where $O_x$ and  $O_y$  denote  the deleted gaps from  $K_1$ and $K_2$, respectively.
 By the continuity of $f$,  $\Psi_w$ is a compact set.
So we can find finitely many rectangles  $$R_1,R_2,\cdots, R_n$$ such that
$$\Psi_w\subset \cup_{i=1}^{n}R_i.$$
We split  the proof in two cases.

 \textbf{Case 1.} If the finite covering comprises   both of the  vertical and horizontal rectangles, i.e. it simultaneously  consists of the following form,
$$ O_x \times  [A_2, B_2] ,   [A_1, B_1]\times O_y.$$
Let $$\phi_x=L_x+S_x\tau_x,\phi_y=L_y+S_y\tau_y,$$
where
$\tau_x=\tau(K_1), \tau_y=\tau(K_2), D=[A_1, B_1]\times [A_2, B_2]$, and
$$L_x=\max_{(x,y)\in D}\left|\dfrac{\partial f}{\partial x}\right|, S_x=\min_{(x,y)\in  D}\left|\dfrac{\partial f}{\partial x}\right|, r_1=r_x=\dfrac{L_x}{S_x},$$
$$L_y=\max_{(x,y)\in D}\left|\dfrac{\partial f}{\partial y}\right|, S_y=\min_{(x,y)\in  D}\left|\dfrac{\partial f}{\partial y}\right|, r_2=r_y=\dfrac{L_y}{S_y}.$$
Without loss of generality,  we assume that for some $O_y$,
 $$|O_y|\phi_y\leq |O_x|\phi_x$$ holds for any $O_x$.

 By the implicit function theorem and the condition $$\tau(K_1)\tau(K_2)\geq L_iS_i>0,$$ the
 curve $\Psi_w$ determines a local function,
 denoted by $y=g(x)$, on some open interval. By the Heine-Borel covering theorem, i.e. any open cover of a closed interval must have a finite subcover,  the function is indeed global. As such $y=g(x)$ is a function defined on $[A_1,B_1].$
 By our assumption, for any $(x,y)\in [A_1, B_1]\times [A_2, B_2]$,
$|f_x|>0, |f_y|>0 $. Since $f(x,y)\in C^1$, it follows that  $f_x$ and $f_y$ are monotonic with respective to $x$ and $y$, respectively. Therefore, $\dfrac{dy}{dx}=-\dfrac{f_y}{f_x}\neq 0$, which yields that    $y=g(x)$ is monotonic. In the remaining proof, we always suppose $f_x<0, f_y>0$ for any $(x,y)\in [A_1, B_1]\times [A_2, B_2]$, i.e.  $g(x)$ is increasing.

 Since the curve $\Psi_w$ is contained in $\cup_{i=1}^{n}R_i,$ if the curve meets the boundary of some
 $$[A_1,B_1]\times O_y \mbox{ or }O_x\times [A_2,B_2], $$ for instance, we may suppose that
 $\Psi_w$ intersects with $$[A_1,B_1]\times O_y=[A_1,B_1]\times (a_y,b_y)$$ at the point $(x_0,y_0)$, where $y_0=b_y$, then there must exist a vertical rectangle $O_x\times [A_2,B_2]$ such that $(x_0,y_0)\in O_x\times [A_2,B_2]$.
 We may suppose the curve   $\Psi_w$ enters and leaves the rectangle as described in Figure 1. Hence, we can find some $x\in [A_1,B_1]$ and a bridge of $K_1$ with length $\rho_x$ such that
 \begin{equation}\label{keyinequality}
 |O_y| >|g(x+\rho_x)-g(x)|.
 \end{equation}

 \begin{figure}[htpb]
    \centering
    \includegraphics[width=0.4\textwidth]{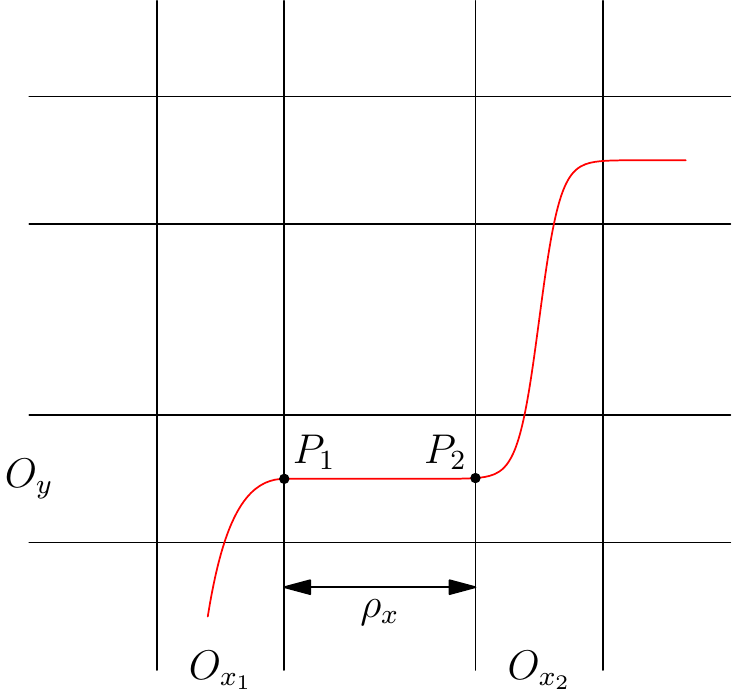}
    \caption{The curve enters and leaves $[A_1,B_1]\times O_y$.}
    \label{fig:tikzpgf}
\end{figure}

 By the mean value theorem, there exists some $\eta\in (x, x+\rho_x)$ such that
 $$|g(x+\rho_x)-g(x)|=|g^{\prime}(\eta)\rho_x|=\left|-\dfrac{f_x}{f_y}\rho_x\right|\geq \left|\dfrac{f_x}{f_y}|O_x|\tau_x\right|, $$
 where $O_x$ is the adjoint (or one of the gaps, i.e. in Figure 1, $O_x=O_{x_1} $ or $O_x=O_{x_2} $ ) gap of  the bridge,  and we use the fact $\rho_x\geq |O_x|\tau_x$ in terms of  the definition of thickness.
 Therefore,
 $$L_y|O_y|\geq |f_y||O_y| >|f_x|O_x|\tau_x|\geq S_x|O_x|\tau_x.$$
 By the definition of $\phi_y$,
 \begin{equation}\label{two}
 \phi_y |O_y|>S_x|O_x|\tau_x+S_y|O_y|\tau_y.
 \end{equation}
 Note that
 $$\dfrac{|O_x|}{\phi_y}\leq \dfrac{|O_y|}{\phi_x}.$$
 This together with (\ref{two}) imply that
 $$|O_y|>\dfrac{S_x|O_y|\tau_x}{\phi_x}+\dfrac{S_y|O_y|\tau_y}{\phi_y},$$
 i.e.
 $$1>\dfrac{\tau_x}{\tau_x+r_x}+\dfrac{\tau_y}{\tau_y+r_y}\Leftrightarrow \tau_x\tau_y<r_xr_y.$$
 We find a contradiction.

 \textbf{Case 2.} If  $\Psi_w$ is contained in a unique $O_x\times [A_2, B_2]$ or $[A_1, B_1]\times O_y$,  in this case, we shall prove that
 the parameter of the curve $\Psi_w$, i.e. $w$,  only takes values from a finite union of open intervals. Therefore, we show $f(K_1, K_2)$ is a finite union of closed intervals.

First, since $f(x,y)$ is $C^{1}$, it follows that the derivative of the curve
$\Psi_w$  is bounded, i.e. there exist some constants $c_1,c_2$ such that
$$0<c_1\leq \left|\dfrac{dy}{dx}\right|=\left|-\dfrac{f_x}{f_y}\right|\leq c_2.$$
Hence, the curve $\Psi_w$ cannot lie in some $[A_1, B_1]\times O_y$ (if the curve lies in some $ O_x\times [A_2, B_2]$, the discussion is similar), where $|O_y|<(B_1-A_1)c_1$. As for this case, by the Lagrange theorem, there exists some $x_0\in [A_1, B_1]$ such that
the derivative of the curve $\Psi_w$ at $x_0$ should be  smaller than
$$\dfrac{|O_y|}{B_1-A_1}<c_1.$$
Hence, for any $w$, if   the curve  $\Psi_w$ is contained in a unique $O_x\times [A_2, B_2]$ or $[A_1, B_1]\times O_y$, then there exist only finitely many such rectangles.
Now, we prove that the parameter of the curve $\Psi_w$, i.e. $w$,  only takes values from a finite union of open intervals.
Recall that we assume
 $y=g(x)$ is increasing.
For two different parameters $w_1$ and $w_2$, we consider $$f(x,y)=w_1 \mbox{ and }f(x,y)=w_2.$$
Let $y=g(x)$ and $y=h(x)$ be two functions of these two curves, respectively.
We note by the implicit function theorem that $$g^{\prime}(x)=h^{\prime}(x)$$ for any $x\in [A_1, B_1]$.
 By the Lagrange theorem, $g(x)=h(x)+c$, where $c$ is a translation.
Hence, if the curve $\Psi_w$ lies in some $$[A_1, B_1]\times O_y=[A_1, B_1]\times (a_y,b_y),$$ then we may translate the curve vertically, and
 $c$ can only takes value  in $(0,g(A_1)-a_y+b_y-g(B_1))$.
 Hence,  the parameter $w$ can be viewed as a function of
 $c$, i.e.
 $$w(c)=f(x,g_0(x)+c),$$ where $g_0(x)=g(x)-c_0$, and $c_0$ is some translation such that $$g_0(A_1)=a_y.$$
 Since $f\in C^1$, it follows from the mean value theorem that $w(c)$ is a continuous function. Recall that  $f_x<0, f_y>0$ for any $(x,y)\in [A_1, B_1]\times [A_2, B_2]$. Thus, by the mean value theorem again,  $w(c)$ is an increasing function. Since $c\in (0,g(A_1)-a_y+b_y-g(B_1))$, the range of  $w(c)$ is also an open interval.
 We finish the proof.

 For the second statement, we can prove in a similar way. First, under the condition
 $$\dfrac{1}{\tau(K_1)}\leq \left|\dfrac{\partial_x f}{\partial_y f}\right|\leq \tau(K_2),$$
 we claim that the \textbf{Case 1} cannot occur. The proof is essentially the same as the first statement.   Second, for the \textbf{Case 2}, the proof is exactly the same. We omit the details.

 Finally, the number of closed intervals in Theorem \ref{finitely many} has already been proved via \textbf{Case 2}.
\end{proof}
\begin{proof}[\textbf{Proof of Theorem \ref{Main}}]
We only prove this result under the  second condition. For the first condition, the idea is the same.
First, it is easy to prove $$f(K_1,\cdots,  K_d)\subset H:=f([A_1, B_1]\times \cdots \times [A_d, B_d]). $$
 If  $w\in H$ and $w\notin f(K_1,\cdots,  K_d)$, then
the hypersurface $f(x_1,\cdots,  z)=w$ does not intersect with $$K_1\times K_2\times\cdots \times K_d.$$
Now  we construct the following set
$$\Psi_w=\{(x_1,x_2,\cdots, x_d)\in [A_1, B_1]\times \cdots \times [A_d, B_d]: f(x_1,\cdots,  x_d)=w\}.$$
Hence, we can find   countably many $d$-dimensional cubes of the form $$ C=[A_1, B_1]\times \cdots\times\Delta_i\times   \cdots \times [A_d, B_d], 1\leq i\leq d,$$ where $\Delta_i$ denotes the deleted open interval from  $K_i$.
Namely, only the $i$-th direction of the cube is an open set, and the rest directions are the convex hull of $K_j,j\neq i.$

By the continuity of $f$,  $\Psi_w$ is a compact set. Therefore,   there exist finitely many  $d$-dimensional cubes, i.e.  $$C_1,C_2,\cdots, C_n$$ such that
$$\Psi_w\subset \cup_{i=1}^{n}C_i.$$
We discuss   in two cases.

 \textbf{Case 1.} If the finite covering consists of all the directions, then we shall find some contradiction.
We write $|\Delta_i|$ for the length of the open interval from the $i$-th direction,  and $\rho_i$ for the length of the appropriate adjoint bridge of $\Delta_i$, where $1\leq i\leq d$.

 By the finiteness of the covering, we can find some $i$, such that
 $$|\Delta_i|\phi_i\leq |\Delta_j|\phi_j, \mbox{ for all } j\neq i,$$
where  $$\phi_i=L_i+S_i\tau(K_i),1\leq i\leq d.$$
 Now, we  prove that
 \begin{equation}\label{key1}
 L_i|\Delta_i|>\sum_{j\neq i}S_j\tau(K_j)|\Delta_j|.
 \end{equation}
 First, by the implicit function theorem and the Heine-Borel covering theorem, we can find a unique global  function $$x_i: [A_1,B_1]\times\cdots \times[A_{i-1}, B_{i-1}]\times [A_{i+1}, B_{i+1}]\times \cdots \times [A_d,B_d]\to \mathbb{R}.$$  More specifically,
 $$x_i=G(x_1,\cdots, x_{i-1}, x_{i+1},\cdots,x_d), 1\leq i\leq d.$$
Note that the hypersurface $$f(x_1,\cdots, x_d)=w$$ is contained in the union of   some  cubes with  the form $$ C_j=[A_1, B_1]\times \cdots\times\Delta_j\times   \cdots \times [A_d, B_d], 1\leq j\leq d.$$
By assumption,  each direction is contained.  For the hypersurface $$f(x_1,\cdots, x_d)=w$$
we suppose it determines a unique function
$$x_i=G(x_1,\cdots, x_{i-1}, x_{i+1},\cdots,x_d),$$ where  for the index $i$, we have
 $$|\Delta_i|\phi_i\leq |\Delta_j|\phi_j, \mbox{ for all } j\neq i.$$
 The hypersurface $$f(x_1,\cdots, x_d)=w$$ partially lies in the cube
 $$\Lambda= [A_1, B_1]\times \cdots\times\Delta_i\times   \cdots \times [A_d, B_d].$$
Thus, we can find some points $X_1, X_2$ from
 $$ [A_1, B_1]\times \cdots\times[A_{i-1}, B_{i-1}]\times  [A_{i+1}, B_{i+1}]  \cdots \times [A_d, B_d],$$ i.e.
 \begin{eqnarray*}
X_1&=&(x_1+\delta_1\rho_1,\cdots,x_{i-1}+\delta_{i-1}\rho_{i-1},x_{i+1}+\delta_{i+1}\rho_{i+1},\cdots,x_{d}+\delta_{d}\rho_{d}) \\
X_2&=&(x_1,\cdots, x_{i-1}, x_{i+1},\cdots, x_d).
\end{eqnarray*}
 such that
 $$|\Delta_i|>|G(X_1)-G(X_2)|=\left|\sum_{j\neq i}-\dfrac{f_{x_j}}{ f_{x_i}}\delta_j \rho_j\right|,$$
 where
 \begin{equation*}
\delta_j=\left\lbrace\begin{array}{cc}
1, f_{x_j}>0\\
-1, f_{x_j}<0.
 \end{array}\right.
\end{equation*}
The reader may refer to inequality (\ref{keyinequality}). The idea is similar.
  Equivalently, we have
$$|f_{x_i}||\Delta_i|>\left|\sum_{j\neq i}f_{x_j}\delta_j \rho_j\right|.$$
By the definition of $\delta_j$,
$f_{x_j}\delta_j>0$. Hence,
$$L_i|\Delta_i|\geq |f_{x_i}||\Delta_i|>\left|\sum_{j\neq i}f_{x_j}\delta_j \rho_j\right|\geq \sum_{j\neq i}S_j \rho_j,$$ i.e.
$$L_i|\Delta_i|>\sum_{j\neq i}S_j\rho_j.$$
  By the definition of Newhouse thickness,  $\rho_j\geq \tau(K_j)|\Delta_j|,j\neq i.$
 Therefore,
 \begin{equation}\label{>>>}
 L_i|\Delta_i|>\sum_{j\neq i}S_j\tau(K_j)|\Delta_j|.
 \end{equation}
 By (\ref{>>>}),
  \begin{equation}\label{>>}
 \phi_i|\Delta_i|=L_i|\Delta_i|+S_i\tau(K_i)|\Delta_i|>\sum_{j=1}^{d}S_j\tau(K_j)|\Delta_j|.
  \end{equation}
 Recall that
 $$|\Delta_i|\phi_i\leq |\Delta_j|\phi_j, \mbox{ for all } j\neq i.$$
Rewriting  inequality (\ref{>>}), we have
 $$1>\sum_{j=1}^{d}\dfrac{S_j\tau(K_j)|\Delta_j|}{\phi_i|\Delta_i|}\geq \sum_{j=1}^{d}\dfrac{S_j\tau(K_j)|\Delta_j|}{\phi_j|\Delta_j|}= \sum_{j=1}^{d}\dfrac{S_j\tau(K_j)}{\phi_j}=\sum_{j=1}^{d}\dfrac{\tau(K_j)}{\tau(K_j)+r_j},$$
which  contradicts the assumption $$\sum_{i=1}^{d}\dfrac{\tau(K_i)}{\tau(K_i)+r_i}\geq 1.$$

Now, we consider \textbf{Case 1} under the first condition:
$$L_i\leq  \sum_{j\neq i}S_j\tau(K_j), 1\leq i\leq d.$$
When we choose $\Delta_i$, we assume that its length is minimal among all the rectangles, i.e.
$$|\Delta_i|\leq |\Delta_j|, i\neq j.$$
Therefore, we still have
 \begin{equation}\label{>}
 L_i|\Delta_i|>\sum_{j\neq i}S_j\tau(K_j)|\Delta_j|.
 \end{equation}
This together with  the minimality of the length of $\Delta_i$ imply that
  $$L_i|\Delta_i|>\sum_{j\neq i}S_j\tau(K_j)|\Delta_j|\geq \sum_{j\neq i}S_j\tau(K_j)|\Delta_i|.$$
  Therefore,
  $$L_i> \sum_{j\neq i}S_j\tau(K_j).$$
We find a contradiction again.

 \textbf{Case 2.}
  If some directions are not included in the finite covering, then we prove the parameter  of the curve $\Psi_w$, i.e. $w$, can only be taken from a finite union of open intervals. We denote these open intervals by $\cup_{j=1}^{k_0}O_j$, where $k_0\in \mathbb{N}^{+}$ is a fixed number.  Therefore,  $$f(K_1,\cdots, K_2)=f([A_1,B_1],\cdots, [A_d,B_d])\setminus (\cup_{j=1}^{k_0}O_j),$$ i.e. it is a union  of finitely many closed intervals.

  We prove the above statement by some cases.

  \textbf{Case 2.1}
  If only one direction is included, without loss of generality, we may suppose
  the covering of $\Psi_w$ is of the following form:
  $$\Delta\times [A_2, B_2]\times \cdots\times [A_d, B_d],$$
  where $\Delta$ is some deleted open interval from $K_1$.
  Note that for any two gaps of $K_1$, i.e. $\Delta_k$ and $\Delta_l$, we have  $\Delta_k\cap \Delta_l=\emptyset, k\neq l.$
  Hence, $$(\Delta_k\times [A_2, B_2]\times \cdots\times [A_d, B_d])\cap (\Delta_l\times [A_2, B_2]\times \cdots\times[A_d, B_d])=\emptyset,k\neq l.$$
  In other words, by the continuity of $f(x_1,\cdots,x_d)$, the hyperplane  $\Psi_w$ can be covered by a unique $$\Gamma=\Delta\times [A_2, B_2]\times \cdots\times [A_d, B_d].$$
  By the implicit function theorem and Heine-Borel covering theorem,
 the hypersurface $\Psi_w$  uniquely  determines the function
  $x_1=U(x_2,x_3,\cdots, x_d)$,  where $U$ is a continuous function. Hence,
  $$|\Delta|> U_{\max}-U_{\min},$$
  where $U_{\max}$ and $U_{\min}$ stand for the maximal and minimal values of $U$.

  Given two parameters $w_1$ and $w_2$, we consider the hypersurfaces
  $$f(x_1,x_2,\cdots, x_d)=w_1,f(x_1,x_2,\cdots, x_d)=w_2.$$
  Suppose they determine the function $U(x_2,x_3,\cdots, x_d)$ and $V(x_2,x_3,\cdots, x_d)$, respectively. Note that
  $$\dfrac{\partial U}{\partial x_{j}}=\dfrac{\partial V}{\partial x_{j}}, j=2,3,\cdots,d.$$
  Let $W=U-V$.  Applying the mean value theorem to $W$,  there exists some constant(translation)
  $c$ such that
  $$U=V+c.$$
  Hence, we translate the hypersurface $x_1=U(x_2,x_3,\cdots, x_d)$ on $\Gamma$, and the parameter $w$ can only be in a finite open interval. The reason why we obtain the open interval is due to the open set $\Delta$, i.e.  the maximal value of the function $x_1=U(x_2,x_3,\cdots, x_d)$ cannot reach the boundary of $\Gamma$.
  Hence, we have proved that if only one direction is included, then
  $f(K_1,K_2,\cdots, K_d)$ is a finitely union of closed intervals.

  \textbf{Case 2.2}

   If   $k$ directions  are included, where $k\in \{2,\cdots,d-1\}$,  without loss of generality, we may suppose
  the covering of $\Psi_w$ is of the following form:
  $$[A_{1}, B_{1}]\times\cdots \times\Delta_{j}\times [A_{j+1}, B_{j+1}]\times\cdots \times [A_{d}, B_{d}],$$
  where $\Delta_j, 1\leq j\leq k,$ is a gap of $K_j$, and  for the last $d-k$ directions, we always use $[A_{k+1}, B_{k+1}]\times\cdots \times[A_d, B_d]$.
  First, we claim that the length of each gap $\Delta_j$  of $K_j, 1\leq j\leq k$ has a  uniform lower bound. The proof is similar to the   \textbf{Case 2.1.}
  By the implicit function theorem and the Heine-Borel covering theorem, the equation
  $$f(x_1,x_2,\cdots, x_d)=w$$ determines a unique continuous function $$x_j=U(x_1,\cdots, x_{j-1}, x_{j+1}, \cdots, x_d)$$ on $[A_1, B_1]\times \cdots \times[A_{j-1}, B_{j-1}]\times [A_{j+1}, B_{j+1}]\times \cdots \times[A_d, B_d], 1\leq j\leq k $.
  Hence, the function $U$ has the maximal and minimal values on its domain (we denote them by $U_{\max}$ and $U_{\min}$, respectively), and  $|\Delta_j|\geq U_{\max}-U_{\min}.$

  Hence, for any $w$, the covering of  the hypersurface $\Psi_w$ should satisfy a basic condition, i.e.  each $|\Delta_j|\geq c_3>0$ for some uniform $c_3$, where  $1\leq j\leq  k$.
  In other words, we have shown the following key result.
  \begin{lemma}\label{finite cover}
There exist finite sets of the following form
$$[A_{1}, B_{1}]\times\cdots \times\Delta_{j}\times [A_{j+1}, B_{j+1}]\times\cdots \times [A_{d}, B_{d}],$$
such that for any $w$,  hypersurface $\Psi_w$   can be uniformly  covered by the above finite covering,
  where $\Delta_j,1\leq j\leq k$ is some gap of $K_j$.
  \end{lemma}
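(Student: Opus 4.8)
The plan is to read Lemma \ref{finite cover} as the packaging of the uniform lower bound just obtained: once we know that every gap $\Delta_j$ which can occur in a covering of any $\Psi_w$ satisfies $|\Delta_j|\geq c_3>0$ with $c_3$ independent of $w$, the lemma reduces to a finiteness statement. First I would fix such a uniform constant $c_3>0$ and, for each $j\in\{1,\dots,k\}$, collect the family $\mathcal{G}_j$ of all gaps of $K_j$ of length at least $c_3$.

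The key step is then to see that each $\mathcal{G}_j$ is finite. Since the gaps $(O_n)_n$ of $K_j$ are pairwise disjoint open subintervals of the convex hull $[A_j,B_j]$, their lengths satisfy $\sum_n|O_n|\leq B_j-A_j<\infty$, so $|O_n|\to 0$ and only finitely many gaps have length $\geq c_3$. Hence the collection
$$\mathcal{C}=\left\{[A_1,B_1]\times\cdots\times\Delta_j\times\cdots\times[A_d,B_d]:1\leq j\leq k,\ \Delta_j\in\mathcal{G}_j\right\}$$
is finite, with cardinality at most $\sum_{j=1}^{k}\sharp\mathcal{G}_j$. To finish I would verify uniformity: for an arbitrary $w$ (with $w\notin f(K_1,\dots,K_d)$), every cube used to cover $\Psi_w$ has its distinguished gap of length $\geq c_3$ by the bound above, hence belongs to $\mathcal{C}$; therefore $\Psi_w\subset\bigcup_{C\in\mathcal{C}}C$. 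As $\mathcal{C}$ does not depend on $w$, this is exactly the uniform finite covering claimed.

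The main obstacle is not this bookkeeping but the positivity and $w$-independence of $c_3$, which the preceding paragraph asserts through the bound $|\Delta_j|\geq U_{\max}-U_{\min}$. To make $c_3$ uniform I would argue as in Case 2.1: by the implicit function theorem and the Heine--Borel covering theorem the level set is a global graph $x_j=U$ over the remaining convex hulls, and since $k\leq d-1$ there is at least one direction $l\in\{k+1,\dots,d\}$ that is never covered by a gap. Along that direction $|\partial_{x_l}U|=|f_{x_l}|/|f_{x_j}|\geq S_l/L_j>0$ has constant sign, so sweeping $x_l$ across the full width $[A_l,B_l]$ forces the range of $U$, and hence $|\Delta_j|$, to be at least $(B_l-A_l)S_l/L_j$. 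Taking $c_3=\min_{1\leq j\leq k,\ k<l\leq d}(B_l-A_l)S_l/L_j>0$ yields a bound independent of $w$. The delicate point I expect to dwell on is precisely this: one must ensure the graph genuinely sweeps the full width of a full direction rather than only a portion of it, which is where the global (rather than merely local) nature of $U$ furnished by Heine--Borel is essential.
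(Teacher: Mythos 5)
Your proposal is correct and follows essentially the same route as the paper: the paper likewise derives the uniform lower bound $|\Delta_j|\geq U_{\max}-U_{\min}\geq c_3>0$ from the global implicit-function graph (implicit function theorem plus Heine--Borel) together with the partial-derivative bounds $S_l\leq|f_{x_l}|\leq L_j$, and then concludes that only finitely many of the pairwise disjoint gaps can exceed this length. You merely make explicit the constant $c_3=\min_{j,l}(B_l-A_l)S_l/L_j$ and the disjointness/finiteness bookkeeping, both of which the paper leaves implicit.
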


  We note that for any constant $a\in \mathbb{R}$,  the hypersurfaces
  $$f(x_1,x_2,\cdots, x_d)=w$$
  and
  $$f(x_1,x_2,\cdots, x_d)=a$$ determine the same function up to a translation. Namely, they determine  functions $$x_w=U(x_1,x_2,\cdots,x_{j-1}, x_{j+1},\cdots,x_d)$$ and $$x_a=V(x_1,x_2,\cdots,x_{j-1}, x_{j+1},\cdots,x_d),$$ respectively, where $k+1\leq j\leq d$.   These two functions coincide up to a translation $c$, i.e.
  $$x_w=x_a+c.$$
  The proof is similar to the discussion in \textbf{Case 2.1}. We omit the details.
 Lemma \ref{finite cover} and the above discussion imply the following.
  \begin{lemma}\label{translation}
  If we translate the hypersurface $x_w$, then we can find all possible choices of  $w$ such that
 the translated  hypersurface, i.e.
  $$x_w+c,$$  is always
  is contained in the uniform finite covering of  hypersurface
  $$f(x_1,x_2,\cdots, x_d)=w.$$
  \end{lemma}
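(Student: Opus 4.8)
The plan is to follow the template already established for the planar case in Theorem~\ref{simple case} and for the single active direction in \textbf{Case 2.1}: I would fix one reference level hypersurface, realize every nearby $\Psi_w$ as a graph over the same free coordinates, and show that as $w$ varies the graph moves by a pure translation in the dependent direction. Once this is in place, the geometric condition ``$\Psi_w$ is contained in the uniform finite covering of Lemma~\ref{finite cover}'' becomes a condition on a single real translation parameter $c$, and the monotone dependence of $w$ on $c$ lets me read off precisely which $w$ are admissible.

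First I would choose the dependent variable. Since the active gaps occupy directions $1,\dots,k$ while the last $d-k$ directions carry full convex hulls, I solve $f=w$ for one of these uncovered coordinates, say $x_j$ with $k+1\le j\le d$, using the implicit function theorem together with the Heine--Borel covering theorem exactly as in the earlier cases; this produces a global continuous graph $x_j=U_w(\cdot)$ over $\prod_{i\neq j}[A_i,B_i]$. Next I would compare two parameters, aiming to show that $U_w$ and $U_a$ differ by a single constant: if the free partial derivatives $\partial_{x_i}U_w=-f_{x_i}/f_{x_j}$ coincide for $U_w$ and $U_a$ throughout the connected domain, then the mean value theorem gives $U_w=U_a+c$ with $c=c(w,a)$, so translating the reference graph by $c$ in the $x_j$ direction sweeps out exactly the family $\{\Psi_w\}$. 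I would then record that $c\mapsto w$ is continuous and strictly monotone: translating the graph in the $x_j$ direction changes the value of $f$ strictly monotonically because $f_{x_j}$ has constant sign (recall $|f_{x_j}|>0$).

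The remaining step is combinatorial. By Lemma~\ref{finite cover} there is one fixed finite family of cubes $[A_1,B_1]\times\cdots\times\Delta_j\times\cdots\times[A_d,B_d]$ covering $\Psi_w$ simultaneously for every bad $w$. Containment of the translated graph $U_a+c$ inside this fixed union forces $c$ to lie in a finite union of open intervals, whose endpoints are exactly the translations at which the graph first touches a boundary face of some gap $\Delta_j$; the intervals are open because the gaps are open. Pushing this finite union forward through the continuous, strictly monotone map $c\mapsto w$ shows that the bad parameters form a finite union of open intervals, whence $f(K_1,\cdots,K_d)=H\setminus(\bigcup_{j=1}^{k_0}O_j)$ is a finite union of closed intervals. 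The main obstacle is the pure-translation claim $U_w=U_a+c$: for a general $C^1$ function the level hypersurfaces need not be exact translates, since $-f_{x_i}/f_{x_j}$ is evaluated at different heights for different levels. I expect to either invoke the sign and monotonicity hypotheses on the partials to justify it carefully, or to bypass it entirely by showing directly that the bad set $B$ is open — the compact sets $\Psi_{w_0}$ and $K_1\times\cdots\times K_d$ are at positive distance, which persists for $w$ near $w_0$ — and then bounding the number of components of $B$ by the finitely many gap-faces furnished by Lemma~\ref{finite cover}.
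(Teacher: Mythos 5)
Your first (main) route is exactly the paper's own proof of Lemma \ref{translation}: the paper derives the lemma from the pure-translation claim you flag, asserting (as in \textbf{Case 2.1} and in \textbf{Case 2} of Theorem \ref{simple case}) that the implicit functions $U$ and $V$ determined by $f=w$ and $f=a$ satisfy $\partial U/\partial x_i=\partial V/\partial x_i$ ``by the implicit function theorem,'' and then applying the mean value theorem to $U-V$. Your doubt about this step is well founded, and it is a genuine gap in the paper as well: the implicit function theorem gives $\partial U/\partial x_i=-\left(f_{x_i}/f_{x_j}\right)(x',U(x'))$ while $\partial V/\partial x_i=-\left(f_{x_i}/f_{x_j}\right)(x',V(x'))$, and these are evaluated at different heights, so they need not coincide. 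Concretely, $f(x,y)=y(1+x)+x$ on $[0,1]^2$ has $f_x=y+1>0$ and $f_y=1+x>0$, yet its level curves $U_w(x)=(w-x)/(1+x)$ satisfy $U_{w_1}(x)-U_{w_2}(x)=(w_1-w_2)/(1+x)$, which is not constant; no sign or monotonicity hypothesis on the partials rescues the claim. So the first branch of your plan (and the paper's argument, together with the monotone map $c\mapsto w$ built on top of it) does not go through as stated.

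Your proposed bypass, however, is the right idea, is genuinely different from the paper's route, and can be completed. Openness of the bad set is immediate: $K_1\times\cdots\times K_d$ is compact, so $f(K_1,\cdots,K_d)$ is compact and its complement is open (your positive-distance argument is an equivalent formulation). For the finiteness step, make your appeal to Lemma \ref{finite cover} precise as follows. Let $D=[A_1,B_1]\times\cdots\times[A_d,B_d]$ and let $\mathcal{U}$ be the union of the uniform finite family of slabs $[A_1,B_1]\times\cdots\times\Delta_j\times\cdots\times[A_d,B_d]$ furnished by Lemma \ref{finite cover}. Each slab is disjoint from $K_1\times\cdots\times K_d$, since its $j$-th coordinate runs through a gap of $K_j$; hence $K_1\times\cdots\times K_d\subseteq D\setminus\mathcal{U}$ and $f(K_1,\cdots,K_d)\subseteq f(D\setminus\mathcal{U})$. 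Conversely, if $w\in f(D)\setminus f(K_1,\cdots,K_d)$, then Lemma \ref{finite cover} gives $\Psi_w\subseteq\mathcal{U}$, i.e.\ $w\notin f(D\setminus\mathcal{U})$. Therefore $f(K_1,\cdots,K_d)=f(D\setminus\mathcal{U})$. Finally, $D\setminus\mathcal{U}=\prod_{j=1}^{d}\left([A_j,B_j]\setminus G_j\right)$, where $G_j$ is the union of the finitely many gaps of $K_j$ occurring in the family; each factor is a finite union of closed intervals, so $D\setminus\mathcal{U}$ is a finite union of closed boxes, each compact and connected, and $f(D\setminus\mathcal{U})$ is consequently a finite union of closed intervals. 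This delivers exactly what Lemma \ref{translation} was designed to deliver, with no translation of hypersurfaces and no counting of components by hand, and it repairs the corresponding defect in the paper's \textbf{Case 2} arguments.
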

Without loss of generality, we assume the hypersurface
  $$f(x_1,x_2,\cdots, x_d)=w$$ determines the function
  $$x_w:=x_j=U(x_1,x_2,\cdots,x_{j-1}, x_{j+1},\cdots,x_d),$$ for some $j\in\{k+1,\cdots, d\}$.
By the implicit function theorem, we have
$$\dfrac{\partial x_j}{\partial x_i}=-\dfrac{ f_i}{ f_j}, i\neq j, k+1\leq j\leq d.$$
Since $f(x_1,\cdots, x_d)\in C^1$ and $| f_i|>0, 1\leq i\leq d$, it follows that
$$ f_i>0 \mbox{ or } f_i<0, 1\leq i\leq d$$ on $[A_1,B_1]\times \cdots\times [A_d, B_d].$
We suppose without loss of generality that  $$ f_i<0,1\leq i\leq d.$$
Hence,
$$\dfrac{\partial x_j}{\partial x_i}=-\dfrac{ f_i}{ f_j}<0, i\neq j, k+1\leq j\leq d.$$
So, the maximal and minimal values of $$x_j=U(x_1,x_2,\cdots,x_{j-1}, x_{j+1},\cdots,x_d)$$ on $[A_1,B_1]\times \cdots \times [A_{j-1}, B_{j-1}]\times[A_{j+1}, B_{j+1}] \times \cdots \times [A_d, B_d]$  are $$U(A_1, \cdots, A_{j-1}, A_{j+1},\cdots, A_d) \mbox{ and }U(B_1, \cdots, B_{j-1}, B_{j+1},\cdots, B_d),$$ respectively.
  By our assumption, in the finite covering, each cube does not consist of  the $j$-th direction, where  $j\in\{k+1,\cdots, d\}$.
  We  translate the hypersurface $x_j=U(x_1,x_2,\cdots,x_{j-1}, x_{j+1},\cdots,x_d)$ via
  $U+c,$ where $c$ is some constant.  The parameter $c$ can be chosen in an open set.  When we translate the hypersurface  $x_j=U$,  it always lies  in the original finite covering.
  The supremum and  infimum of the parameter $c$, happen exactly  when the hypersurface  $U+c$
  reaches the boundary of  finite covering.
  So, by Lemmas \ref{finite cover} and \ref{translation}, $w$ can be chosen in an open set. For other remaining directions, the proof is similar.  We are done.
\end{proof}

\section{Applications and  Examples}
We first give the following example to illustrate that although the thicknesses of Cantor sets are small,  we still have similar consequence as the Newhouse's thickness theorem.
\begin{example}
Let $K_1$ be a self-similar set with the IFS
$$\left\{f_1(x)=\dfrac{x}{4}, f_2(x)=\dfrac{x}{4}+0.3\right\}.$$
Let $K=K_1\cup (K_1+0.6)$, where
$$K_1+0.6=\{x+0.6:x\in K_1\}.$$
$K$ is a Cantor set with convex hull $[0,1]$. Note that
$$\tau(K)=\tau(K_1)=\dfrac{1}{2}. $$
Hence, the Newhouse thickness theorem cannot be used for $K+K$.  Nonetheless, it is easy to prove that $K+K$ is also a self-similar set with the IFS
$$\left\{f_1(x)=\dfrac{x}{4},f_i(x)=\dfrac{x+0.6i}{4},2\leq i\leq 8,f_9(x)=\dfrac{x+6}{4}\right\}.$$
The convex hull of $K$ is $[0,2]$. Hence, it is easy to check
$$f_i([0,2])\cap f_{i+1}([0,2])\neq \emptyset, 1\leq i\leq 8.$$
Therefore, $$K+K=[0,2].$$
\end{example}
 We give an example to illustrate that without the linked condition we can only get that the sum of two Cantor sets is a finite union of closed intervals.
\begin{example}
Let $J=[0,1/9]\cap C$, where $C$ is the middle-third Cantor set. Then
$$J+C=[0,4/9]\cup [6/9,10/9].$$
\end{example}
 By Theorem \ref{simple case}, $$J+C=[0,4/9]\cup [6/9,10/9].$$
For the next application, we consider  the Egyptian fractions. Erd\H{o}s and Straus \cite{Erdos1980} posed the following celebrated conjecture.
\begin{conjecture}
For any positive integer $n\geq 2$ the equation
$$\dfrac{4}{n}=\dfrac{1}{x}+\dfrac{1}{y}+\dfrac{1}{z},$$
has a solution in positive integers $x, y,$ and $ z$.
\end{conjecture}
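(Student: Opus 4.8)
The plan is to attack the conjecture by the classical arithmetic route of reduction to primes followed by covering congruences. The thickness machinery developed above produces real solutions drawn from Cantor sets and is therefore of no use for the \emph{integer} equation, so a genuinely number-theoretic argument is required; I make no appeal to the fractal analogue here.

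\emph{Step 1 (reduction to odd primes).} First I would exploit the multiplicative structure: if $4/m=1/x+1/y+1/z$ in positive integers and $n=km$, then $4/n=1/(kx)+1/(ky)+1/(kz)$, so solvability for $m$ gives solvability for every multiple of $m$. Since every $n\geq 2$ has a prime factor, it suffices to solve the equation when $n$ is prime. Even $n$ are immediate, because for $n=2m$ one has
$$\frac{4}{n}=\frac{2}{m}=\frac{1}{m}+\frac{1}{2m}+\frac{1}{2m},$$
which in particular disposes of $n=2$. Thus the whole conjecture reduces to producing, for every odd prime $p$, positive integers with $4/p=1/x+1/y+1/z$.

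\emph{Step 2 (covering identities).} For an odd prime $p$ I would seek parametric solutions indexed by the residue of $p$ modulo a fixed modulus $M$, the standard choice being $M=840=\operatorname{lcm}(1,2,\dots,8)$. The mechanism is that a single integer-valued identity, with $x,y,z$ prescribed expressions in $p$, is valid for an entire arithmetic progression $p\equiv a\pmod M$. Clearing denominators in $4xyz=p(xy+yz+zx)$ and forcing one denominator to carry the factor $p$, one obtains templates of the form
$$\frac{4}{p}=\frac{1}{x}+\frac{1}{y}+\frac{1}{z},\qquad x=\frac{p+s}{t},\quad y,z\in\mathbb{Z}_{>0},$$
each valid exactly when the required divisibilities (such as $t\mid p+s$) hold, i.e. on a prescribed class in $(\mathbb{Z}/M\mathbb{Z})^{\times}$. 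I would enumerate the admissible templates and record which unit classes they cover, aiming to cover all of $(\mathbb{Z}/M\mathbb{Z})^{\times}$.

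\emph{Step 3 (the obstruction).} Running Step 2 with all presently known templates, one finds (Mordell) that exactly the \emph{square} unit classes survive, namely $p\equiv 1,121,169,289,361,529\pmod{840}$, that is $p\equiv r^{2}\pmod{840}$ with $\gcd(r,840)=1$. \textbf{This is the hard part, and it is precisely where any such plan halts:} no covering identity is known for these quadratic classes, and producing one (or otherwise settling the equation there) is equivalent to the full conjecture. I would therefore be candid that Steps 1--2 dispose of every $n$ outside a fixed union of residue classes mod $840$ — a set of positive density but not all of $\mathbb{N}$ — whereas Step 3 is the genuinely open core of the Erd\H{o}s--Straus conjecture and is not resolved by this, or any currently known, elementary method.
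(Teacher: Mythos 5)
You have not produced a proof, and neither does the paper: the statement you were given is quoted there verbatim as the (open) Erd\H{o}s--Straus conjecture, cited to Erd\H{o}s and Graham, and it serves purely as motivation. The paper's actual contribution at this point is the \emph{analogue} on the middle-third Cantor set, namely that $\frac{1}{C}+\frac{1}{C}+\frac{1}{C}+\frac{1}{C}=[4,\infty)$, proved by repeatedly applying Theorem \ref{Cantor} to pieces of $C$ such as $[2/3,1]\cap C$ and $[2/9,1/3]\cap C$ and patching the resulting intervals together. So there is no ``paper proof'' to compare yours against, and any blind attempt at the integer statement was bound to terminate exactly where yours does.

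That said, your write-up is arithmetically sound as far as it goes, and you are candid about where it stops. The multiplicativity reduction ($4/m=1/x+1/y+1/z$ implies $4/(km)=1/(kx)+1/(ky)+1/(kz)$), the disposal of even $n$ via $4/(2m)=1/m+1/(2m)+1/(2m)$, and the reduction to odd primes are all correct; the covering-congruence framework modulo $840$ and Mordell's identification of the surviving classes $p\equiv 1,121,169,289,361,529 \pmod{840}$ (the quadratic residues coprime to $840$) are accurately stated. The genuine gap is the one you name yourself in Step 3: no identity or other method is known for the square classes, and closing that gap is equivalent to the full conjecture. Your only substantive misstep is strategic rather than mathematical --- you treat this as a statement the paper proves, when the paper's point is precisely that the integer problem is intractable while its fractal counterpart yields to the thickness theorem; your opening remark that the thickness machinery ``is of no use for the integer equation'' is correct and is, in effect, the paper's own position.
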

Motivated by this conjecture, we have the following result.
\begin{theorem}
Let $C$ be the middle-third Cantor set. Then
$$\dfrac{1}{C}+\dfrac{1}{C}+\dfrac{1}{C}+\dfrac{1}{C}=\left\{\dfrac{1}{x_1}+\dfrac{1}{x_2}+\dfrac{1}{x_3}+\dfrac{1}{x_4}:x_i\in C\setminus \{0\}, 1\leq i\leq 4\right\}=[4,\infty).$$
Namely, for any $a\in [4,\infty)$, there exist some $x_i\in C,1\leq i\leq 4$ such that
$$a=\dfrac{1}{x_1}+\dfrac{1}{x_2}+\dfrac{1}{x_3}+\dfrac{1}{x_4}.$$
Moreover,   our result is sharp in the sense that
$$\dfrac{1}{C}+\dfrac{1}{C}+\dfrac{1}{C} \mbox{ or }\dfrac{1}{C}+\dfrac{1}{C}$$ is not an interval.
\end{theorem}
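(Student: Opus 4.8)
The plan is to prove the two inclusions separately, reducing the hard one to a bounded base interval via the self-similarity of $C$. Write $D=\frac1C$ and $S=D+D+D+D$. Since every $x\in C\setminus\{0\}$ lies in $(0,1]$, each reciprocal is $\ge 1$, so $S\subseteq[4,\infty)$ and $4\in S$ (take all $x_i=1$). For the reverse inclusion I first record a scaling relation: from $\phi_1(x)=x/3$ we get $\tfrac13 C\subseteq C$, hence $3D\subseteq D$ and therefore $3S\subseteq S$. Thus it suffices to prove the base inclusion $[4,12]\subseteq S$: iterating the scaling gives $S\supseteq\bigcup_{n\ge 0}3^n[4,12]=\bigcup_{n\ge 0}[4\cdot 3^n,\,12\cdot 3^n]$, and since $12\cdot 3^n=4\cdot 3^{n+1}$ consecutive blocks abut, so $[4,\infty)\subseteq S$.

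To manufacture sub-intervals of $S$ I apply Theorem~\ref{arithmetic} to $f(x_1,\dots,x_4)=\sum_{i=1}^{4}1/x_i$ on products of cylinder pieces of $C$ bounded away from $0$. All partials $\partial_{x_i}f=-1/x_i^2$ are negative, so I use the negative-derivative variant recorded in the Remark after Corollary~\ref{Cantorsum}, whose governing inequality reads
\[
s_{\min}\sum_{j\ne i}(B_j-A_j)\,\bigl|\partial_{x_j}f\bigr|\ \ge\ (\text{first-level gap of }K_i)\,\bigl|\partial_{x_i}f\bigr|.
\]
The favourable regime is when all four variables sit at one common scale: taking $x_i\in\phi_2(C)$ (convex hull $[2/3,1]$, so $1/x_i\in[1,3/2]$ and $|\partial_{x_i}f|\in[1,9/4]$) one has $s_{\min}=\tfrac13$, hull length $\tfrac13$ and first-level gap $\tfrac19$, so the inequality reduces to $\sum_{j\ne i}|\partial_{x_j}f|\ge|\partial_{x_i}f|$, whose worst case is $3\ge 9/4$ and holds. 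Hence $f(\phi_2(C)^4)$ is a finite union of closed intervals by Theorem~\ref{arithmetic}, and the connectivity clauses of Corollary~\ref{arithmetictwo} upgrade it to the single interval $[4,6]$. The same reduction with $d=2$ would instead require $1\ge 9/4$ and fails, which already foreshadows why two copies cannot suffice. Scaling this base block by $3$ gives $[12,18]\subseteq S$, so within $[4,12]$ the only band left to fill is $(6,12)$.

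The band $(6,12)$ is the main obstacle. Any total exceeding $6$ forces some reciprocal $\ge 3$, i.e.\ a variable $x_i\le 1/3$; but then $|\partial_{x_i}f|=1/x_i^2$ jumps by a factor $\ge 9$ over variables in $[2/3,1]$, and because $C\cap(1/3,2/3)=\varnothing$ there is no intermediate scale to interpolate. The configurations that alone can realize totals in $(6,12)$ therefore mix variables across this scale gap, and a single same-scale block either violates the displayed inequality or misses part of the band. My remedy is a multi-scale covering: let the small variable range over successively deeper cylinders $\phi_1\phi_2 w(C)$, on which $1/x$ is nearly linear so the derivative ratio $r_i=L_i/S_i\to 1$, pair it with light variables on matching cylinders so the inequality is restored at those locations, and read off from each admissible block a short closed interval. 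The crux — the nonlinear analogue of Astels' linked condition — is to show that as the cylinders sweep their locations these short intervals overlap and collectively exhaust $(6,12)$; I would verify this by the connectivity conditions of Corollary~\ref{arithmetictwo} together with an induction on cylinder depth.

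Finally, sharpness follows cleanly from the decomposition $D=E\sqcup 3D$, where $E:=D\cap[1,3)=\{3/(x+2):x\in C\}\subseteq[1,3/2]$ (the reciprocals of $C\cap[2/3,1]$) and $3D\subseteq[3,\infty)$. For $D+D$: two summands below $3$ force a sum $\le 3$, whereas any summand $\ge 3$ forces a sum $\ge 4$, so $(3,4)\cap(D+D)=\varnothing$ and $D+D$ is not an interval. For $D+D+D$: three summands below $3$ give a sum $\le\tfrac92$, while any summand $\ge 3$ gives a sum $\ge 5$, so $(\tfrac92,5)\cap(D+D+D)=\varnothing$. The identical bookkeeping for four summands produces the bands $[4,6]$ and $[6,\infty)$, which \emph{abut} at $6$ with no gap — precisely the structural reason why four reciprocals, and not fewer, can fill a half-line.
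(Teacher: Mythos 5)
Your upper inclusion, the scaling reduction $3S\subseteq S$ to the base interval $[4,12]$, the block $[4,6]$ (hence $[12,18]$), and the sharpness bookkeeping are all correct, and they coincide with the paper's own steps: the paper works with $-f$ so that all partials are positive, gets $f\bigl(([2/3,1]\cap C)^4\bigr)=[-6,-4]$ from Theorem \ref{Cantor}, and derives sharpness from $\tfrac1C\subset[1,3/2]\cup[3,\infty)$ exactly as you do. The problem is that the part you defer --- filling the band $(6,12)$ --- is the actual content of the theorem, and your proposal offers only a strategy (``I would verify this by \dots induction on cylinder depth''), not a proof. The paper does this by exhibiting five explicit admissible blocks and checking the hypothesis of Theorem \ref{Cantor} on each: $([2/9,1/3]\cap C)^3\times([2/3,1]\cap C)$ yields $[10,15]$; $([6/27,7/27]\cap C)\times([8/27,1/3]\cap C)\times([2/3,1]\cap C)^2$ yields $[62/7,10]$; $([8/27,1/3]\cap C)^2\times([8/9,1]\cap C)^2$ yields $[8,9]$; $([6/27,7/27]\cap C)\times([2/3,1]\cap C)^3$ yields approximately $[7.2,8.625]$; and $([8/27,1/3]\cap C)\times([2/3,1]\cap C)^3$ yields $[6,63/8]$. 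Only the union of all of these chains across $[6,12]$, and some of the hypothesis verifications are tight (for the first block the worst case is $9+9+3\geq 81/4$), so they cannot be waved through.

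Moreover, your sketched remedy --- \emph{one} small variable pushed into deeper cylinders, paired with three light variables --- cannot cover the sub-band $(9,12)$ even in principle. The reciprocals of $C\cap(0,1/3]$ lie in $[3,9/2]\cup[9,\infty)$, while three light variables in $[2/3,1]$ contribute a total in $[3,9/2]$; hence any sum with exactly one variable $\leq 1/3$ lies in $[6,9]\cup[12,\infty)$. So totals in $(9,12)$ force at least \emph{two} variables below the scale gap, and the paper's covering correspondingly mixes configurations with one, two, and three small variables. This case analysis is invisible in your single-small-variable induction scheme, so the missing step is not routine: as stated, the plan would fail on $(9,12)$, and the proof is genuinely incomplete there.
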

First, we let
$f(x, y,z,w)=-\dfrac{1}{x}-\dfrac{1}{y}-\dfrac{1}{z}-\dfrac{1}{w}$. By Theorem \ref{Cantor},
$$f([2/3,1]\cap C, [2/3,1]\cap C,[2/3,1]\cap C, [2/3,1]\cap C)=[-6,-4],$$ which implies  $$\dfrac{1}{C}+\dfrac{1}{C}+\dfrac{1}{C}+\dfrac{1}{C}\supset [4,6]. $$
For any $x\in C$, we always have $x/3\in C$. This fact together with the above inclusion yield that
$$\dfrac{1}{C}+\dfrac{1}{C}+\dfrac{1}{C}+\dfrac{1}{C}\supset [12,18]. $$
Hence, to prove
$$\dfrac{1}{C}+\dfrac{1}{C}+\dfrac{1}{C}+\dfrac{1}{C}= [4,\infty)$$
it suffices to show
$$\dfrac{1}{C}+\dfrac{1}{C}+\dfrac{1}{C}+\dfrac{1}{C}\supset [6,12].$$
Similarly, by Theorem \ref{Cantor},
$$f([2/9,1/3]\cap C, [2/9,1/3]\cap C,[2/9,1/3]\cap C, [2/3,1]\cap C)=[-15,-10].$$
Therefore,
$$\dfrac{1}{C}+\dfrac{1}{C}+\dfrac{1}{C}+\dfrac{1}{C}\supset [10,15].$$
Hence, it remains to show
$$\dfrac{1}{C}+\dfrac{1}{C}+\dfrac{1}{C}+\dfrac{1}{C}\supset [6,10].$$
In terms of  Theorem \ref{Cantor},
$$f([6/27,7/27]\cap C, [8/27,1/3]\cap C,[2/3,1]\cap C, [2/3,1]\cap C)\supset [-10,-62/7].$$
As such,
$$\dfrac{1}{C}+\dfrac{1}{C}+\dfrac{1}{C}+\dfrac{1}{C}\supset [62/7,10].$$
By  Theorem \ref{Cantor},
$$f([8/27,1/3]\cap C, [8/27,1/3]\cap C,[8/9,1]\cap C, [8/9,1]\cap C)\supset [-9,-8].$$
Thus,
$$\dfrac{1}{C}+\dfrac{1}{C}+\dfrac{1}{C}+\dfrac{1}{C}\supset [8,9].$$

By means  of  Theorem \ref{Cantor},
$$f([6/27,7/27]\cap C, [2/3,1]\cap C,[2/3,1]\cap C, [2/3,1]\cap C)\supset M,$$
where
\begin{equation*}
\begin{aligned}
M&=&[-3-9/8-81/18,-81/19-1-18/7]\\
&=&f([18/81,19/81],[2/3,7/9],[2/3,7/9],[8/9,1]).
\end{aligned}
\end{equation*}
Thus,
$$\dfrac{1}{C}+\dfrac{1}{C}+\dfrac{1}{C}+\dfrac{1}{C}\supset [81/19+1+18/7,3+9/8+81/18]\approx[7.2,8.625].$$

\noindent By virtue of  Theorem \ref{Cantor} again,
$$f([8/27,1/3]\cap C, [2/3,1]\cap C,[2/3,1]\cap C, [2/3,1]\cap C)\supset [-63/8,-6].$$
Thus,
$$\dfrac{1}{C}+\dfrac{1}{C}+\dfrac{1}{C}+\dfrac{1}{C}\supset [6,63/8].$$
Therefore, we have proved
$$\dfrac{1}{C}+\dfrac{1}{C}+\dfrac{1}{C}+\dfrac{1}{C}\supset [4,12].$$
Since
$$\dfrac{1}{C}+\dfrac{1}{C}+\dfrac{1}{C}+\dfrac{1}{C}\supset \dfrac{3}{C}+\dfrac{3}{C}+\dfrac{3}{C}+\dfrac{3}{C},$$
it follows that
$$\dfrac{1}{C}+\dfrac{1}{C}+\dfrac{1}{C}+\dfrac{1}{C}=[4,\infty). $$

For the sharpness, it is easy to check that
$$\dfrac{1}{C}\subset [1,3/2]\cup [3,\infty).$$
Therefore, $$\dfrac{1}{C}+\dfrac{1}{C}+\dfrac{1}{C}\subset [3,4.5]\cup [5,+\infty),\dfrac{1}{C}+\dfrac{1}{C}\subset[2,3]\cup [4,+\infty).$$

We give the following example to consider division on some Cantor sets.
We find that the complete structure of Cantor sets under the division operation has some application to the  Diophantine equations.

It is well-known that  the Fermat's equation, i.e.
$$x^n+y^n=z^n, n\in \mathbb{N}_{\geq 3},$$ has no non-trivial solutions in
integers.
However, in the fractal setting, we can find  the solutions of the Fermat's equation on some Cantor sets.
\begin{theorem}
Let $K$ be the attractor of the following IFS
$$\{f_1(x)=\lambda x, f_2(x)=\lambda x +1-\lambda, 0<\lambda<1/2\}.$$
Let $n\in \mathbb{N}_{\geq 2}$. If   $\lambda\in \left[1/3,\dfrac{3-\sqrt{5}}{2}\right)$, then
  $$(x,y,z)\in (K\cap (0,1))\times (K\cap (0,1))\times (K\cap (0,1))$$ satisfies
$$x^n+y^n=z^n$$
 only if
$$y=(1+\alpha)x \mbox{ or }x=(1+\alpha)y, x\in  K\cap (0,1),\alpha\geq 0,$$
and there exists  $k\in \mathbb{Z}$ such that
\begin{equation*}
   \left\lbrace\begin{array}{cc}
   (\lambda^{kn}(1-\lambda)^n-1)^{1/n}-1\leq \alpha\leq (\lambda^{kn}(1-\lambda)^{-n}-1)^{1/n}-1\\
   (\lambda^{kn}(1-\lambda)^{-n}-1)^{-1/n}-1\leq \alpha\leq (\lambda^{kn}(1-\lambda)^{n}-1)^{-1/n}-1.
                \end{array}\right.
\end{equation*}
In particular, if $\lambda\in \left[1/3,\dfrac{3-\sqrt{5}}{2}\right)$, then there exist infinitely many
  $$(x,y,z)\in (K\cap (0,1))\times (K\cap (0,1))\times (K\cap (0,1))$$ such that
$$x^n+y^n=z^n.$$
\end{theorem}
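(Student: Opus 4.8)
The plan is to reduce the whole statement to an exact description of the quotient set
$$\frac{K}{K}=\left\{\frac{u}{v}:u,v\in K\cap(0,1)\right\},$$
exploiting that the Fermat equation is homogeneous of degree $n$. If $x^{n}+y^{n}=z^{n}$ with $x,y,z\in K\cap(0,1)$, then dividing by $x^{n}$ and by $y^{n}$ gives $(z/x)^{n}=1+(y/x)^{n}$ and $(z/y)^{n}=1+(x/y)^{n}$, so the pairwise ratios carry all the information. Since $z,x\in K$ forces $z/x\in K/K$ and likewise $z/y\in K/K$, the necessary conditions in the theorem should be nothing more than these two membership statements, rewritten in terms of $\alpha$. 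I would first record the scaling symmetry: because $f_{1}(K)=\lambda K\subset K$, one has $\lambda(K/K)=(\lambda K)/K\subset K/K$ and $\lambda^{-1}(K/K)=K/(\lambda K)\subset K/K$, so $K/K$ is invariant under multiplication by $\lambda^{k}$ for every $k\in\mathbb{Z}$.

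The heart of the argument is the following lemma, which I expect to be the main obstacle: for $\lambda\in[1/3,(3-\sqrt{5})/2)$,
$$\frac{K}{K}\cap(0,\infty)=\bigcup_{k\in\mathbb{Z}}\lambda^{k}\left[1-\lambda,\ \frac{1}{1-\lambda}\right],$$
a \emph{disjoint} union of closed intervals. The inclusion $\supseteq$ is where the lower bound $\lambda\ge 1/3$ enters: the thickness of $K$ is $\tau(K)=\lambda/(1-2\lambda)$, which is $\ge 1$ exactly when $\lambda\ge 1/3$, so $\tau(K)^{2}\ge 1$ and Newhouse's gap lemma (equivalently Theorem \ref{intersection}) applies to the pair $K,\,tK$ for each ratio $t$ in the fundamental interval $[1-\lambda,1/(1-\lambda)]$; verifying that these two Cantor sets are linked for such $t$ produces an intersection point in $(0,1)$, i.e. $t\in K/K$, and the scaling symmetry spreads this over all $k$. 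The reverse inclusion and, crucially, the disjointness come from the first-level geometry: writing $u,v$ according to the level-one pieces $[0,\lambda]$ and $[1-\lambda,1]$ and inducting on the construction shows no ratio can fall in the gap $(1/(1-\lambda),(1-\lambda)/\lambda)$, which is nonempty precisely when $1/(1-\lambda)<(1-\lambda)/\lambda$, i.e. $\lambda^{2}-3\lambda+1>0$, i.e. $\lambda<(3-\sqrt{5})/2$. Thus the two hypotheses have transparent roles: the lower one fills each interval, the upper one keeps consecutive intervals apart so that $K/K$ genuinely has gaps and hence imposes a constraint at all. The delicate bookkeeping is at the endpoint $\lambda=1/3$, where $\tau(K)=1$ is borderline.

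Granting the lemma, the \emph{only if} part is a routine rearrangement. Assuming without loss of generality $y\ge x$ and writing $\beta=y/x=1+\alpha\ge 1$, I impose $z/x=(1+\beta^{n})^{1/n}\in\lambda^{k}[1-\lambda,1/(1-\lambda)]$ for a suitable $k$; raising to the $n$-th power gives
$$\lambda^{kn}(1-\lambda)^{n}\le 1+\beta^{n}\le \lambda^{kn}(1-\lambda)^{-n},$$
and subtracting $1$, taking $n$-th roots, and substituting $\beta=1+\alpha$ produces exactly the first displayed line of the theorem. Applying the same reduction to $z/y=(1+\beta^{-n})^{1/n}\in\lambda^{k}[1-\lambda,1/(1-\lambda)]$ and taking reciprocals yields the second line (here the index $k$ is chosen independently for each ratio). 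The symmetric case $x\ge y$ gives the alternative $x=(1+\alpha)y$.

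Finally, for infinitely many solutions I would use the diagonal together with homogeneity. Since $2^{1/n}>1-\lambda$ always and $2^{1/n}\le 1/(1-\lambda)$ whenever $\lambda\ge 1-2^{-1/n}$, which holds for all $n\ge 2$ because $1-2^{-1/n}\le 1-2^{-1/2}<1/3\le\lambda$, the value $2^{1/n}$ lies in the $k=0$ interval of the lemma, hence $2^{1/n}\in K/K$. Thus there exist $x,z\in K\cap(0,1)$ with $z=2^{1/n}x$, and then $(x,x,z)$ satisfies $x^{n}+x^{n}=z^{n}$, one genuine solution. The infinitude is immediate by scaling: for every $m\ge 1$ the triple $(\lambda^{m}x,\lambda^{m}x,\lambda^{m}z)$ still lies in $(K\cap(0,1))^{3}$ because $f_{1}^{m}$ maps $K$ into $K$, and it again solves the equation by homogeneity, yielding infinitely many distinct triples.
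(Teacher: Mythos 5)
Your proposal is correct in substance and follows the same top-level strategy as the paper's own proof: use homogeneity to reduce the Fermat equation to the two membership conditions $(1+(1+\alpha)^n)^{1/n}\in K/K$ and $(1+(1+\alpha)^{-n})^{1/n}\in K/K$, describe $K/K$ as $\bigcup_{k\in\mathbb{Z}}\lambda^k\left[1-\lambda,(1-\lambda)^{-1}\right]\cup\{0\}$ (a disjoint union in the stated range of $\lambda$), rearrange to obtain the two displayed inequalities, and produce infinitely many solutions from $2^{1/n}\in K/K$ together with the symmetry $(x,y,z)\mapsto(\lambda x,\lambda y,\lambda z)$. Where you genuinely diverge is the justification of the structure of $K/K$: the paper disposes of it in one line by citing Theorem \ref{Cantor}, whereas you prove it as a free-standing lemma, getting $\supseteq$ from the Newhouse/Astels thickness mechanism ($\tau(K)=\lambda/(1-2\lambda)\geq 1$ iff $\lambda\geq 1/3$) and getting $\subseteq$ plus disjointness from the level-one geometry ($1/(1-\lambda)<(1-\lambda)/\lambda$ iff $\lambda<(3-\sqrt{5})/2$). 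This extra work buys something real: a direct application of Theorem \ref{Cantor} (or of Theorem \ref{intersection}) with $f(u,v)=u/v$ to the natural pieces $K'=K\cap[1-\lambda,1]$ has $s_{min}=\lambda$, $\kappa=1-2\lambda$, and its hypothesis reads $\lambda u\geq(1-2\lambda)v$ on $[1-\lambda,1]^2$, whose worst case is $\lambda(1-\lambda)\geq 1-2\lambda$, i.e.\ $\lambda\geq(3-\sqrt{5})/2$ --- exactly the range \emph{excluded} by the theorem. So the paper's citation cannot be implemented in the obvious way, and your self-contained lemma (which at $\lambda=1/3$ reproduces the known Athreya--Reznick--Tyson description of $C\div C$) supplies precisely the step the paper leaves opaque; your reading that the two endpoints play separate roles ($1/3$ fills each interval, $(3-\sqrt{5})/2$ keeps consecutive intervals apart) is the right picture.

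Two repairs are needed in your sketch of the lemma. First, you apply the gap lemma to the pair $K$, $tK$; since $0\in K\cap tK$, that intersection is nonempty for trivial reasons and yields nothing. Apply it instead to pieces bounded away from $0$, namely $K'=K\cap[1-\lambda,1]$ and $tK'$: for $t\in[1-\lambda,(1-\lambda)^{-1}]$ their hulls $[1-\lambda,1]$ and $[t(1-\lambda),t]$ overlap, and neither hull fits inside a gap of the other (every gap of either set has length at most $\lambda(1-2\lambda)/(1-\lambda)$, which is less than either hull length, at least $\lambda(1-\lambda)$), so a nonzero intersection point exists and $t\in K'/K'$; scaling by $\lambda^k$ then gives all the intervals. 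Second, at $\lambda=1/3$ the product of thicknesses is exactly $1$, so you should invoke a version allowing equality rather than the strict-inequality gap lemma quoted as Theorem 1.1; the cleanest choice inside this paper is Theorem \ref{Astels} with $d=2$ applied to $K'$ and $-tK'$, since $\tau_1\tau_2\geq 1$ is equivalent to $\tau_1/(1+\tau_1)+\tau_2/(1+\tau_2)\geq 1$ and $0$ lies in the resulting interval $K'-tK'$ precisely when $t\in[1-\lambda,(1-\lambda)^{-1}]$. With these adjustments your argument is complete, and the remaining steps (the algebraic rearrangement with independent indices $k$ for the two ratios, and the infinitude via $2^{1/n}\leq(1-\lambda)^{-1}$ and scaling by $\lambda^m$, $m\geq 1$) are correct as written.
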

\begin{proof}
Let   $(x,y,z)\in (K\cap (0,1))\times (K\cap (0,1))\times (K\cap (0,1))$ be a solution of
$$x^n+y^n=z^n.$$
We suppose without loss of generality that $y=(1+\alpha)x,   \alpha\geq 0$. Then the equation
$$x^n+y^n=z^n$$ becomes
$$\dfrac{z}{x}=\sqrt[n]{1+(1+
\alpha)^n}\in \dfrac{K}{K}.$$
Hence, to find a solution of the above equation, it suffices to consider the complete structure of
$$\dfrac{K}{K}=\left\{\dfrac{z}{x}:x,z\in K, x\neq 0\right\}. $$
However, by Theorem \ref{Cantor}, if   $\lambda\in \left[1/3,\dfrac{3-\sqrt{5}}{2}\right)$, then
$$\dfrac{K}{K}= \cup_{k=-\infty}^{\infty}[(\lambda^k(1-\lambda), \lambda^k(1-\lambda)^{-1}]\cup \{0\}.$$
Therefore, we can find the first condition in the bracket.
Similarly, if
$$y=(1+\alpha)x,$$ then
$$x=\dfrac{y}{1+\alpha}.$$ As such,
$$x^n+y^n=z^n$$ becomes
$$\dfrac{z}{y}=\sqrt[n]{1+(1+
\alpha)^{-n}}.$$
Hence, we have the second condition in the bracket.

For the second statement,  if we let  $\alpha=0$ and $x=y$, then the equation
$x^n+y^n=z^n$ becomes
$$2x^n=z^n.$$
For this case, we always have
$$\dfrac{K}{K}\supset [1-\lambda,(1-\lambda)^{-1}]\supset \{2^{1/n}\}_{n=2}^{\infty}.$$
Note that if $(x,y,z)$ is a solution of the Fermat's equation, then
$$(\lambda x, \lambda y,\lambda z)$$ is also a solution. Hence, the Fermat's equation has infinitely many solutions.
\end{proof}

Generally, once we have obtained the closed  form of $f(K_1,\cdots, K_d)$, then we can consider similar Diophantine equations on $K_1\times \cdots\times K_d$.
More precisely, if we have proved that $f(K_1,\cdots, K_d)$ is a finite union of closed intervals, i.e.
$$f(K_1,\cdots, K_d)=\cup_{i=1}^{k}I_i,$$
where $k$ is some positive integer and each $I_i$ is a closed interval,
then for any $\alpha\in (\cup_{i=1}^{k}I_i)^c$, the following equation
$$f(x_1,\cdots, x_d)=\alpha$$ does not have any solution over
$K_1\times \cdots \times K_d.$
If $\beta\in \cup_{i=1}^{k}I_i$, then the equation
$$f(x_1,\cdots, x_d)=\beta$$  has a solution on
$K_1\times \cdots \times K_d.$

Finally, we have the following  result.
\begin{theorem}
Let $K$ be the attractor of the following IFS
$$\{f_1(x)=\lambda x, f_2(x)=\lambda x +1-\lambda, 0<\lambda<1/2\}.$$
Let $n\in \mathbb{N}_{\geq 2}$. If   $\lambda\in \left[1/3,\dfrac{3-\sqrt{5}}{2}\right)$, then
$$e K\cap \pi K\neq\emptyset, e K\cap  K\neq\emptyset, \pi K\cap K\neq\emptyset, \sqrt{2} K\cap  K\neq \emptyset, $$
where $aK=\{ax:x\in K\setminus \{0\}\},a\in \mathbb{R}$.
\end{theorem}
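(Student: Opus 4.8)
The plan is to translate each of the four intersection statements into a single membership statement about the ratio set $K/K$, and then read off the answer from the structure of $K/K$ already determined in the preceding theorem. The basic observation, which is exactly the fact recorded just after Theorem \ref{intersection}, is that for nonzero reals $a,b$ one has $aK\cap bK\neq\emptyset$ precisely when there exist nonzero $x,y\in K$ with $ax=by$, that is, precisely when $b/a\in K/K$. Applying this to the four pairs $(a,b)$ equal to $(e,\pi)$, $(1,e)$, $(1,\pi)$ and $(1,\sqrt{2})$, it suffices to prove
$$\frac{\pi}{e}\in\frac{K}{K},\qquad e\in\frac{K}{K},\qquad \pi\in\frac{K}{K},\qquad \sqrt{2}\in\frac{K}{K}.$$

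For this I would invoke the formula established in the previous theorem: for $\lambda\in[1/3,(3-\sqrt{5})/2)$,
$$\frac{K}{K}=\bigcup_{k=-\infty}^{\infty}\left[\lambda^{k}(1-\lambda),\ \lambda^{k}(1-\lambda)^{-1}\right]\cup\{0\}.$$
Thus each membership reduces to locating the target number inside one component interval. I would place $\sqrt{2}$ and $\pi/e$ in the central interval ($k=0$), namely $[1-\lambda,(1-\lambda)^{-1}]$, and place $e$ and $\pi$ in the next interval ($k=-1$), namely $[(1-\lambda)/\lambda,\ 1/(\lambda(1-\lambda))]$. Each such location is equivalent to two elementary inequalities in $\lambda$; for instance $\sqrt{2}\le(1-\lambda)^{-1}$ amounts to $\lambda\ge 1-2^{-1/2}$, which holds since $\lambda\ge 1/3$, while $e\le 1/(\lambda(1-\lambda))$ amounts to $\lambda(1-\lambda)\le e^{-1}$, and the companion lower bounds $1-\lambda\le\pi/e$ and $(1-\lambda)/\lambda\le e$ are immediate.

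The only point requiring care is uniformity in $\lambda$: the endpoints of the component intervals vary with $\lambda$, so I must check that a single index $k$ works throughout the half-open range $[1/3,(3-\sqrt{5})/2)$. This is handled by monotonicity, since $(1-\lambda)/\lambda$ is decreasing and $\lambda(1-\lambda)$ is increasing on this range, so the extreme cases occur at the endpoints $\lambda=1/3$ and $\lambda\to(3-\sqrt{5})/2$; a direct check (e.g. $2\le e<\pi<1/(\lambda(1-\lambda))$ for the $k=-1$ interval at the worst endpoint) then confirms each required inequality uniformly. Once all four ratios are verified to lie in $K/K$, the equivalence above yields the four nonempty intersections simultaneously, completing the proof. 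I expect the verification of the four pairs of inequalities to be entirely routine; the substantive content is the reduction via $K/K$ together with the already-proved structure theorem for $K/K$.
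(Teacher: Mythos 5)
Your proposal is correct and takes essentially the same route as the paper: the paper likewise reduces each of the four intersections to a membership statement about the ratio set (via the fact recorded after Theorem \ref{intersection}) and then invokes the formula $\frac{K}{K}=\bigcup_{k=-\infty}^{\infty}\left[\lambda^{k}(1-\lambda),\lambda^{k}(1-\lambda)^{-1}\right]\cup\{0\}$ valid for $\lambda\in\left[1/3,\frac{3-\sqrt{5}}{2}\right)$. In fact you give strictly more detail than the paper, which merely asserts the conclusion ``follows immediately'' from this formula, whereas you locate $\sqrt{2}$, $\pi/e$ in the $k=0$ component and $e$, $\pi$ in the $k=-1$ component and verify the resulting inequalities uniformly in $\lambda$.
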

The proof follows immediately from the following fact: if   $$\lambda\in \left[1/3,\dfrac{3-\sqrt{5}}{2}\right),$$ then
$$\dfrac{K}{K}= \cup_{k=-\infty}^{\infty}[(\lambda^k(1-\lambda), \lambda^k(1-\lambda)^{-1}]\cup \{0\}.$$

\section{Final remarks and some problems}
In this paper, we only consider the problems in $\mathbb{R}$. It is interesting to consider similar problems in $\mathbb{R}^d, d\geq 2.$
For instance,  given $E,F\subset \mathbb{R}^d$, define a continuous function $g:\mathbb{R}^{2d}\to \mathbb{R}^d$. It would be interesting to consider when $g(E,F)$ contains  some hypersurface.

 Motivated by some famous conjectures in analytic number theory, many problems can be asked. For instance, it is natural to consider the following Fermat-Catalan equation on the middle-third Cantor set.
\begin{problem}
Let $n,m,k\in \mathbb{N}_{\geq 3}$.
Then how can we find  some  non-trivial $(x,y,z)\in C \times C\times C$ satisfying
$$x^n+y^m=z^k.$$
\end{problem}
Similar problems can be asked.
\begin{problem}
Finding all possible  $$(x,y,z)\in (C\cap (0,1))\times (C\cap (0,1))\times (C\cap (0,1))$$
such that
$$x^2+y^2=z.$$
\end{problem}
\section*{Acknowledgements}
 The work is
 supported by K.C. Wong Magna Fund in Ningbo University.

\end{document}